\newcommand{\abs}[1]{{\left|#1\right|}}
\newcommand{\norma}[1]{{\left\Vert#1\right\Vert}}
\def\Xint#1{\mathchoice
    {\XXint\displaystyle\textstyle{#1}}%
    {\XXint\textstyle\scriptstyle{#1}}
    {\XXint\scriptstyle\scriptscriptstyle{#1}}%
    {\XXint\scriptscriptstyle\scriptscriptstyle{#1}}%
      \!\int}
\def\XXint#1#2#3{{\setbox0=\hbox{$#1{#2#3}{\int}$}
    \vcenter{\hbox{$#2#3$}}\kern-.5\wd0}}
\def\dashint{\Xint-}
\theoremstyle{definition}
\newtheorem{definizione}{Definition}[section]
\theoremstyle{plain}
\newtheorem{teorema}{Theorem}[section]
\newtheorem{lemma}[teorema]{Lemma}
\newtheorem{prop}[teorema]{Proposition}
\theoremstyle{definition}
\newtheorem{esempio}{Example}[section]
\newtheorem{oss}[esempio]{Remark}
\newtheorem*{open*}{Open problems}
\DeclareMathOperator{\R}{\mathbb{R}}
\newcommand{\myfootnote}[2]{\begingroup
	\def\@makefnmark{}%
	\addtocounter{footnote}{-1}%
	\footnote{\textbf{#1} #2}
	\endgroup}
\title{The Talenti comparison result in a quantitative form}
\author{Vincenzo Amato, Rosa Barbato, Alba Lia Masiello, Gloria Paoli}
\date{}
\begin{document}

\maketitle
 \begin{abstract}

In this paper, we obtain a quantitative version of the classical comparison result of Talenti for elliptic problems with Dirichlet boundary conditions. The key role is played by quantitative versions of the  P\'olya-Szeg\H o inequality and of the    Hardy-Littlewood inequality. 
\newline
\newline
\textsc{Keywords:}  Laplace operator; Talenti comparison; stability\\
\textsc{MSC 2020:}  35J05, 35J25, 35B35 
\end{abstract}

\section{Introduction}

Symmetrization techniques in the context of qualitative properties of solutions to second-order elliptic boundary value problems have been widely studied in the last decades.  In  the seminal paper \cite{talenti76}, Talenti considers an open and bounded set $\Omega\subset\mathbb{R}^n$ and the problem

\begin{equation}
\label{mainproblem}
    \begin{cases}
        -\Delta u =f &\text{in}\;\Omega\\
        u=0 &\text{on}\;\partial\Omega.
    \end{cases}
\end{equation}
where $f\in L^{\frac{2n}{n+2}}(\Omega)$ if $n>2$, $f\in L^p(\Omega)$, $p>1$ if $n=2$, and he  proves that it is possible to compare the solution to \eqref{mainproblem} with the solution to the symmetrized problem
\begin{equation}
\label{symmetricproblem}
    \begin{cases}
        -\Delta v =f^{\sharp} &\text{in}\;\Omega^{\sharp}\\
        v=0 &\text{on}\;\partial\Omega^{\sharp},
    \end{cases}
\end{equation}
where $\Omega^{\sharp}$ is the ball centered in the origin with the same measure as $\Omega$ and $f^\sharp$ is the Schwarz rearrangement of $f$ (see Definition \ref{rear}). More precisely,  the following point-wise comparison  is proved
\begin{equation}\label{tal}
    u^\sharp(x)\le v(x) \quad  \forall x \in \Omega^\sharp, 
\end{equation}
and it also holds for a generic uniformly elliptic linear operator in divergence form.  

The result by Talenti is a very powerful tool, as it adapts well to solving various types of variational problems. For instance, let us mention Saint Venant's conjecture, which asserts that the torsional rigidity, that is the $L^1-$norm of the solution to \eqref{mainproblem} in the case $f\equiv 1$, is maximum on balls among sets of given measure. This conjecture can be solved  by integrating \eqref{tal}. 

Over the years, the topic of comparison results has gained more and more interest. A version of the aforementioned result of Talenti for nonlinear operators in divergence form is contained in \cite{T2}, which includes the case of the $p-$Laplace operator as a special instance; see also \cite{betta_mercaldo}. Further extensions can be found, for instance,  in \cite{AFLT} for anisotropic elliptic operators, in \cite{ALT} for the parabolic case,  in \cite{AB, T3} for higher-order operators,  and in \cite{diaz},  where the Steiner symmetrization is used in place of the Schwarz one.
In the recent paper \cite{ANT}, the 
authors consider problem \eqref{mainproblem} replacing the Dirichlet boundary conditions with the Robin one
and they prove a comparison result involving Lorentz norms of $u$ and $v$ whenever $f$ is a non-negative function in $L^2(\Omega)$. In particular, in the case $f\equiv 1$ and $n=2$, they recover the point-wise inequality as in \cite{talenti76}.
Generalizations of the results contained in \cite{ANT} can be found for the anisotropic case in \cite{San2, amato2022isoperimetric}, for mixed boundary conditions in \cite{ACNT}, in the case of the Hermite operator in \cite{nunzia2022sharp} and in the nonlinear case \cite{AGM}. 
For further generalizations and  general overviews of comparison results we refer, for instance, to \cite{alvino_onsome, chiti, cianchi_fusco, mazja_russo, kesavan_articolo}
 and to the references therein, not claiming to give here a comprehensive list of all the results developed in this context. 

Another interesting question about the topic is trying to understand if comparison inequalities are rigid, as it has been done in \cite{lions_remark}. 
Indeed, the authors characterize the equality in \eqref{tal} in the case $f\ge 0$, proving that if equality occurs, then $\Omega$ is a ball, $u$ is radially symmetric and decreasing, and $f=f^\sharp$.
Rigidity results for a generic linear, elliptic second-order operator can be found in \cite{posteraro} and \cite{kesavan} and for Robin boundary conditions in \cite{NOI,masiello2023rigidity}.

Once a comparison result and a rigidity result hold, it is natural to ask whether the comparison result can be improved in a quantitative version. More precisely, 
considering the rigidity result in \cite{lions_remark}, 
 if equality \emph{almost} holds in \eqref{tal}, is it true that the set $\Omega$ is \emph{almost} a ball, and the function $u$ and $f$ are \emph{almost} radially symmetric and decreasing? 
The main objective of this paper is to study the stability of \eqref{tal} and to answer the following question: if $u^\sharp$ is  \textit{close} to $v$ in some sense, can we say that the set $\Omega$ and the functions $u$ and $f$ are  \textit{almost} symmetric? One of the difficulties we have to deal with is to give the most suitable notions of "closeness" for this type of problem. 

More precisely, we state our main result in the following way.
\begin{teorema}
\label{main_theorem} 
Let $\Omega$ be a bounded open set of $\R^n$ and let $f$ be a non-negative function in $ L^2(\Omega)$.  Suppose that $u$ and $v$ are the solutions to \eqref{mainproblem} and \eqref{symmetricproblem}, respectively. Then, there exist some positive constants $ \theta_1=\theta_1(n), \theta_2=\theta_2(n)$ and    $ C_1:= C_1(n, \abs{\Omega}, f^\sharp),\, C_2:=  C_2(n, \abs{\Omega}, f^\sharp),\, C_3:=  C_3(n, \abs{\Omega}, f^\sharp),$ such that
\begin{equation}\label{esti_tot}
     C_1 \alpha^3(\Omega)+ C_2 \inf_{x_0\in\R^n}\norma{u-u^\sharp(\cdot+x_0)}_{L^1(\R^n)}^{ \theta_1} +C_3\inf_{x_0\in\R^n} \norma{f-f^\sharp(\cdot +x_0)}_{L^1(\R^n)}^{ \theta_2} \leq ||v-u^{\sharp}||_{L^\infty(\Omega)}.
\end{equation}
Moreover, the dependence of $ C_1,\,  C_2$ and $  C_3$ from $\abs{\Omega}$ and $f^\sharp$ is explicit.
\end{teorema}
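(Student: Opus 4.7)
The plan is to revisit the classical Talenti proof and track the loss in each of the three inequalities on which it rests: the isoperimetric inequality at each level set, the Cauchy--Schwarz inequality along the level sets (the infinitesimal form of the P\'olya--Szeg\H o inequality), and the Hardy--Littlewood inequality between $f$ and $f^\sharp$. Setting $\mu(t)=|\{u>t\}|$ and combining the coarea formula with these three estimates, the scheme produces
\[
(n\omega_n^{1/n})^2 \mu(t)^{2(n-1)/n} \leq P(\{u>t\})^2 \leq \Big(\int_{\{u>t\}} f\,dx\Big)(-\mu'(t)) \leq \Big(\int_0^{\mu(t)} f^\sharp(\sigma)\,d\sigma\Big)(-\mu'(t)),
\]
a chain of equalities when $u$, $f$, $\Omega$ are replaced by $v$, $f^\sharp$, $\Omega^\sharp$. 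Subtracting the two chains and solving for $-du^\sharp/ds$ yields a non-negative \emph{defect function} $\mathcal E(s)\geq 0$; after integration over $(0,|\Omega|)$, the left-hand side telescopes to $v(0)-u^\sharp(0)=\|v-u^\sharp\|_{L^\infty}$, so that
\[
\|v-u^\sharp\|_{L^\infty}\geq \int_0^{|\Omega|} \mathcal E(s)\, ds.
\]

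I would then split $\mathcal E$ into three non-negative contributions and estimate each from below separately. The isoperimetric piece is bounded below by $c\,\alpha(\{u>t(s)\})^2\, s^{2(n-1)/n}$ via the quantitative isoperimetric inequality of Fusco--Maggi--Pratelli. To replace the level-set asymmetry by $\alpha(\Omega)$, I would combine Hall's continuity estimate $|\alpha(E)-\alpha(F)|\lesssim|E\triangle F|/|E|$ with $|\{u>t\}\triangle\Omega|\to 0$ as $t\to 0^+$, restricting the integration to a sufficiently small range of levels where this transfer is quantitative; the restriction itself costs one further power of $\alpha(\Omega)$, producing the exponent~$3$. The Cauchy--Schwarz piece, once integrated in $s$, reproduces the P\'olya--Szeg\H o deficit $\int_\Omega|\nabla u|^2-\int_{\Omega^\sharp}|\nabla u^\sharp|^2$, to which a quantitative P\'olya--Szeg\H o inequality applies, yielding a power of $\inf_{x_0}\|u-u^\sharp(\cdot+x_0)\|_{L^1}$. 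Finally, the Hardy--Littlewood piece is bounded via a quantitative Hardy--Littlewood stability result, producing the analogous $L^1$-term in $f$.

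The main obstacle will be the isoperimetric step: one must upgrade pointwise-in-$t$ quantitative information about $\{u>t\}$ to a global statement about $\Omega$, while simultaneously controlling the interplay between the weight $s^{-2(n-1)/n}$, the range of admissible levels, and the norms of $f^\sharp$ that enter the constants. A related technical point is that the available quantitative P\'olya--Szeg\H o inequalities require some non-degeneracy of $|\nabla u|$; to apply them to the weak solution of \eqref{mainproblem} I would truncate $u$ at regular levels, derive the estimate there, and pass to the limit, keeping track of the explicit dependence on $n$, $|\Omega|$ and $f^\sharp$ that scaling prescribes, which in turn produces the explicit form of $C_1, C_2, C_3$ claimed in the statement.
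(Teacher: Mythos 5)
Your overall architecture --- integrate the level-set deficits of the isoperimetric, Cauchy--Schwarz and Hardy--Littlewood inequalities in Talenti's chain, then invoke the quantitative isoperimetric, P\'olya--Szeg\H o and Hardy--Littlewood theorems --- is the same three-ingredient strategy the paper follows, and your first step is essentially Lemma \ref{boosted_prop} and Theorem \ref{asimmetryindex} (the ``boosted'' trick of Hansen--Nadirashvili/Brasco--De Philippis). But two of your three steps have genuine gaps, not technicalities. The most serious one is the P\'olya--Szeg\H o step: the quantitative inequality you would invoke (Theorem \ref{polya_quant}) bounds $\inf_{x_0}\norma{u-u^\sharp(\cdot+x_0)}_{L^1}$ by $\left[M_{u^\sharp}(E(u)^r)+E(u)\right]^s$, and the term $M_{u^\sharp}$, the relative measure of $\{|\nabla u^\sharp|<\delta\}\cap\{0<u^\sharp<\norma{u}_\infty\}$, cannot be discarded: for a general Sobolev function a small energy deficit does \emph{not} imply $L^1$ closeness to the rearrangement (this is precisely why that term appears in \cite{polyaquantitativa}). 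Your scheme only produces a bound on the deficit $E(u)$ (which the paper obtains more directly from the variational characterization of $v$, Lemma \ref{lem_grad}), and your proposed remedy --- truncating $u$ at regular levels and passing to the limit --- does not control $M_{u^\sharp}$, because the possible degeneracy of $|\nabla u^\sharp|$ lives at interior levels and survives truncation. The paper has to use the equation here: the explicit radial bound $|\nabla v|(x)\geq |x|/n$ (after normalization), the exceptional set $I$ of levels where $\int_{v=t}|\nabla v|-\int_{u^\sharp=t}|\nabla u^\sharp|$ is large, and the comparison $\nu-n^2\omega_n^{2/n}\varepsilon\leq\mu\leq\nu$, leading to $M_{u^\sharp}(\varepsilon^r)\leq K_4\varepsilon^{\theta_5}$ (Propositions \ref{delta0}--\ref{Musharp}). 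Without an analogue of this, the second term in \eqref{esti_tot} is not justified.

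The Hardy--Littlewood step as you set it up also does not deliver the third term. In your telescoping the HL deficit enters level by level as $\int_0^{\mu(t)}f^*-\int_{U_t}f$, so after integration (and after paying the weight, which costs a factor $\norma{f}_1$) you control $\int_0^{\abs{\Omega}}u^*f^*-\int_\Omega uf$; applying the quantitative inequality of Theorem \ref{cianchi_hardy} to this pair means taking $h=u$, which controls the distance of $f$ from $f_u$ (the rearrangement of $f$ along the level sets of $u$), not from $f^\sharp$, and whose admissibility hypotheses $\theta_p(s)<\infty$, $\norma{g}_{\Lambda^q_p}<\infty$ require quantitative strict monotonicity of $u^*$ that a generic solution does not provide. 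The paper instead applies the inequality with $h=v$ and $g=f\chi_{\Omega\cap\Omega^\sharp}$, where $\theta_p$ is explicit from \eqref{v_explicit}--\eqref{grad_explicit} and the finiteness of the Lorentz norm is exactly where $f\in L^2$ and the restriction $m<2$ enter (an $L^2$ bound for $f-f^\sharp$ is impossible, see the counterexample after Theorem \ref{confrontof}); it must then estimate $\int_{\Omega^\sharp}(g^\sharp-g)v$ using the \emph{outputs of the previous steps}, namely the $L^2$ closeness of $u$ and $u^\sharp$ (the Gagliardo--Nirenberg upgrade \eqref{gagliardo_l2} of Theorem \ref{confrontol1}) and the bound $\abs{\Omega\Delta\Omega^\sharp}\lesssim\varepsilon^{1/4}$ coming from Theorem \ref{sopralivelli}. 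So the $f$-estimate is not a direct consequence of the integrated HL deficit; the three steps are chained, not parallel. Finally, a smaller but real point in your isoperimetric step: the claim that restricting to a small range of levels ``costs one power of $\alpha(\Omega)$'' again needs the PDE --- the paper proves $s_\Omega\geq\frac{\alpha(\Omega)}{4n^2\omega_n^{2/n}}-\varepsilon$ from the explicit bound on $-\nu'$ and the comparison of $\mu$ with $\nu$, together with a case distinction between $\varepsilon\lesssim\alpha(\Omega)$ and $\varepsilon\gtrsim\alpha(\Omega)$; the qualitative fact $|\{u>t\}\Delta\Omega|\to0$ as $t\to0^+$ gives no rate and Hall's continuity estimate alone cannot produce one.
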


In order to prove Theorem \ref{main_theorem}, we show that the quantity $||v-u^{\sharp}||_\infty$  bounds from above each term on the left-hand side: this is the aim of Theorems \ref{asimmetryindex}, \ref{confrontol1} and \ref{confrontof}.

The first step  is contained in Theorem \ref{asimmetryindex}, which can be read in this sense: if the $L^\infty$ distance between $u^\sharp$ and $v$ is small, then the set $\Omega$ has a small Fraenkel asymmetry index, defined as

\begin{equation}\label{asimm}
	\alpha(\Omega):=\min_{x \in \R^{n}}\bigg \{  \dfrac{|\Omega\Delta B_r(x)|}{|B_r(x)|} \;,\; |B_r(x)|=|\Omega|\bigg \},
\end{equation}
where the symbol $\Delta$ stands for the symmetric difference and $|\Omega|$ is the measure of the set. 
This is a standard way to measure how much a set $\Omega$ differs from a ball $B_r$ with the same measure in the $L^1$ norm.

\begin{teorema}
\label{asimmetryindex}    
    Let $f\in L^{\frac{2n}{n+2}}(\Omega)$ if $n>2$, $f\in L^p(\Omega)$, $p>1$ if $n=2$ be a non-negative function and let $u$ and $v$ be the solutions to \eqref{mainproblem} and \eqref{symmetricproblem} respectively, and let $u^\sharp$ be the Schwarz rearrangement of $u$.
 Then, there exists a  constant $\widetilde C_1=\widetilde C_1(n)>0$, such that  
\begin{equation}\label{goal0}
    \alpha^3(\Omega)\leq \widetilde C_1\dfrac{\norma{v-u^{\sharp}}_\infty}{\abs{\Omega}^{\frac{2-n}{n}} \norma{f}_1},
        \end{equation}
where  
\begin{equation}\label{cappa1}
\widetilde C_1= \max\left\{\left(8 n^2\omega_n^{\frac{2}{n}}\right)^{3},(16n^2\omega_n^{\frac{2}{n}} \gamma_n)\right\},\end{equation}
  $\omega_n$ is the measure of the unit ball of $\R^n$ and $\gamma_n$ is the constant appearing in the quantitative isoperimetric inequality (see Theorem \ref{quant_isop_prop} ).
\end{teorema}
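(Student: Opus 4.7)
The plan is to revisit the classical derivation of \eqref{tal}, substituting the quantitative isoperimetric inequality (Theorem~\ref{quant_isop_prop}) for the standard one and integrating the resulting excess along the level sets of $u$. Set $\mu(t)=|\{u>t\}|$; as in \cite{talenti76}, combining the coarea formula, Cauchy--Schwarz, and integration of $-\Delta u = f$ over $\{u>t\}$ gives $P(\{u>t\})^2 \le (-\mu'(t))\int_0^{\mu(t)} f^\sharp$. Replacing the isoperimetric inequality by its quantitative form
\[ P(\{u>t\}) \ge n\omega_n^{1/n}\mu(t)^{(n-1)/n}\Big(1+\frac{\alpha(\{u>t\})^2}{\gamma_n}\Big), \]
passing to the measure variable $s=\mu(t)$, and writing $U(s)=u^\sharp$, $V(s)=v$ as radial functions of the enclosed measure, the same argument applied to $v$ (where equality holds at every step) yields
\[ -V'(s)+U'(s) \ge \frac{\int_0^s f^\sharp(\tau)\,d\tau}{(n\omega_n^{1/n})^2 s^{2(n-1)/n}}\left[1-\Big(1+\frac{\alpha(\{u>U(s)\})^2}{\gamma_n}\Big)^{-2}\right]. \]

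The next task is to replace the level-set asymmetry by $\alpha(\Omega)$. Let $E_s:=\{u>U(s)\}$, so $|E_s|=s$; if $B^\star$ is an optimal Fraenkel ball of $E_s$ and $\widetilde B$ denotes its concentric dilation to volume $|\Omega|$, the triangle inequality for symmetric differences gives
\[ |\Omega \triangle \widetilde B| \le |\Omega\setminus E_s|+|E_s\triangle B^\star|+|B^\star\triangle\widetilde B| = 2(|\Omega|-s)+s\,\alpha(E_s), \]
hence $\alpha(E_s)\ge \alpha(\Omega)-2(|\Omega|-s)/|\Omega|$. Integrating the displayed differential inequality on the shell $s\in(|\Omega|(1-\delta),|\Omega|)$ with $\delta\in(0,\alpha(\Omega)/2)$, and using $V(|\Omega|)=U(|\Omega|)=0$ together with $\norma{v-u^\sharp}_\infty \ge V(|\Omega|(1-\delta))-U(|\Omega|(1-\delta))$, the monotonicity of $f^\sharp$ (so that $\int_0^s f^\sharp \ge \norma{f}_1/2$ whenever $s\ge |\Omega|/2$, automatic since $\alpha(\Omega)\le 2$ forces $\delta\le 1/2$), and the bound $s^{2(n-1)/n}\le |\Omega|^{2(n-1)/n}$, one obtains a lower bound on $\norma{v-u^\sharp}_\infty$ proportional to $\delta(\alpha(\Omega)-2\delta)^2$ times $\norma{f}_1|\Omega|^{(2-n)/n}/(n^2\omega_n^{2/n}\gamma_n)$.

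A two-regime analysis concludes. When $\alpha(\Omega)^2/\gamma_n$ is small enough that the elementary inequality $1-(1+x)^{-2}\ge x$ applies at $x=\alpha(E_s)^2/\gamma_n$, maximising $\delta\mapsto\delta(\alpha(\Omega)-2\delta)^2$ over $(0,\alpha(\Omega)/2)$ (critical point $\delta=\alpha(\Omega)/6$, with value $2\alpha(\Omega)^3/27$) produces
\[ \alpha(\Omega)^3 \le 16\, n^2\omega_n^{2/n}\gamma_n \frac{\norma{v-u^\sharp}_\infty}{|\Omega|^{(2-n)/n}\norma{f}_1}, \]
matching the second entry of \eqref{cappa1}. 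In the opposite regime, the bracket admits a uniform lower bound $c_0>0$ throughout the shell, producing an estimate linear in $\alpha(\Omega)$; the inequality $\alpha(\Omega)^3\le 4\alpha(\Omega)$ then promotes this to the cubic bound with constant $(8n^2\omega_n^{2/n})^3$. Taking the maximum of the two constants yields \eqref{goal0}.

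The main obstacle is the transfer of the asymmetry from the level sets of $u$ to the set $\Omega$ itself: the bound $\alpha(E_s)\ge\alpha(\Omega)-2(|\Omega|-s)/|\Omega|$ is informative only on a shell near $s=|\Omega|$ of optimal thickness $\sim\alpha(\Omega)|\Omega|$. This thin-shell loss is precisely what upgrades the quadratic gain provided by the quantitative isoperimetric inequality into the cubic exponent appearing in \eqref{goal0}.
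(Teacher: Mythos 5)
Your strategy overlaps heavily with the paper's for its first half: the paper also inserts the quantitative isoperimetric inequality \eqref{quant_isop} into Talenti's level-set argument (see \eqref{peromega} and \eqref{fine}), and transfers the asymmetry from $\Omega$ to the large superlevel sets; your triangle-inequality bound $\alpha(E_s)\ge\alpha(\Omega)-2(\abs{\Omega}-s)/\abs{\Omega}$ is essentially a quantified reproof of Lemma \ref{lembrasco}, which the paper imports from \cite{brasco}. Where you genuinely diverge is the final step. The paper keeps the integration in the level variable $t$, so its gain is $s_\Omega\,\alpha^2(\Omega)/(2\gamma_n)$ (Lemma \ref{boosted_prop}) and the remaining work is a lower bound on the height $s_\Omega$, obtained by comparing $\mu$ with $\nu$ via $\nu(t)-n^2\omega_n^{2/n}\varepsilon\le\mu(t)\le\nu(t)$ and the identity \eqref{nu}; this is what produces its Case 1/Case 2 split between $\varepsilon$ and $\alpha(\Omega)$. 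You instead integrate over a shell in the measure variable and bound the weight $-(v^*)'(s)=\frac{1}{n^2\omega_n^{2/n}s^{2-2/n}}\int_0^s f^*$ from below directly from \eqref{v_explicit} and Remark \ref{prop:decreas}, which removes the need for the $\mu$--$\nu$ comparison and for that case distinction altogether; this is a real simplification if it can be made rigorous.

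Two points need repair. First, the pointwise inequality $-(u^*)'(s)\le -(v^*)'(s)\bigl(1+\alpha^2(E_s)/\gamma_n\bigr)^{-2}$ and the identity $V(s_1)-U(s_1)=\int_{s_1}^{\abs{\Omega}}(-V'+U')\,ds$ are asserted, not proved: $\mu$ need not be absolutely continuous or strictly decreasing, $u$ may have plateaus (on which $\abs{\{u>u^*(s)\}}<s$, so your identification $\abs{E_s}=s$ fails), and if $u^*$ carried a singular part one would only get $U(s_1)\ge\int_{s_1}^{\abs{\Omega}}(-U')$, which is the wrong direction for concluding $V(s_1)-U(s_1)\ge\int g$. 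This is fixable --- since $u^\sharp\in W^{1,2}_0(\Omega^\sharp)$, $u^*$ is locally absolutely continuous away from $s=0$, and on plateau intervals $(u^*)'=0$ while the bracket is $\le 1$, so the inequality is trivial there --- but as written it is a gap; the paper sidesteps it entirely by integrating in $t$ and using the generalized-inverse relations (this is exactly how \eqref{fine} is obtained). Second, the constants: $\max_{0<\delta<\alpha/2}\delta(\alpha-2\delta)^2=2\alpha^3/27$ at $\delta=\alpha/6$, so with your factor $1/2$ from $\int_0^s f^*\ge\norma{f}_1/2$ you get $\alpha^3(\Omega)\le 27\,n^2\omega_n^{2/n}\gamma_n\,\norma{v-u^\sharp}_\infty/(\abs{\Omega}^{\frac{2-n}{n}}\norma{f}_1)$, not the claimed $16\,n^2\omega_n^{2/n}\gamma_n$, and the ``opposite regime'' sketch does not literally yield $(8n^2\omega_n^{2/n})^3$ either (also, $\delta\le 1/2$ does not follow from $\alpha(\Omega)\le 2$ alone, only from the eventual choice $\delta=\alpha/6$). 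So, once the first point is patched, your argument proves \eqref{goal0} with a constant of the same form in $n$, but not with the exact $\widetilde C_1$ stated in \eqref{cappa1}.
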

 
To prove this result we make use of a clever idea introduced in \cite{hansen}, used for obtaining non-sharp quantitative isoperimetric inequalities for the principal frequency and the capacity. 
This idea is also explained and exploited in the survey paper \cite{brasco} to prove a quantitative version of the P\'olya-Szeg\H o principle (see Theorem \ref{poliaszego}). Moreover,  we observe that Theorem \ref{asimmetryindex} implies the quantitative result contained in \cite{kim} in the case $f\equiv 1$.

In Theorem \ref{sopralivelli}, we obtain a similar bound for the Fraenkel asymmetry of almost any superlevel set $U_t=\{u>t\}$ of the function $u$ solution to \eqref{mainproblem}.

The second step for proving Theorem \ref{main_theorem} is contained in Theorem \ref{confrontol1}. We prove that the $L^1(\R^n)$ distance between $u$ and $u^\sharp$ is small, assuming that $u^\sharp$ is close to $v$ with respect to the $L^\infty$-distance.

\begin{teorema}\label{confrontol1} Let $f\in L^{\frac{2n}{n+2}}(\Omega)$ if $n>2$, $f\in L^p(\Omega)$, $p>1$ if $n=2$ be a non-negative function and let $u$ and $v$ be the solution to \eqref{mainproblem} and \eqref{symmetricproblem} respectively. Then, there exist two constants $\tilde{\theta}_1=\tilde{\theta}_1(n)>0$ and     $\widetilde C_2= \widetilde C_2(n)>0$ such that
\begin{equation}\label{esti_L1}
   \inf_{x_0\in \R^n} ||u-u^\sharp(\cdot +x_0)||_{L^1(\R^n)}\leq \widetilde C_2\dfrac{\norma{f}_{\frac{2n}{n+2}}^{1-2\tilde{\theta}_1}}{\norma{f}_{1}^{1-3\tilde{\theta}_1}}\abs{\Omega}^{1+\left(\frac{2}{n}-1\right)\tilde{\theta}_1}||v-u^{\sharp}||_{_{L^\infty(\Omega)}}^{\tilde{\theta}_1}.
\end{equation}
\end{teorema}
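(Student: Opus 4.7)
The plan is to obtain \eqref{esti_L1} by combining two ingredients: an explicit upper bound on the P\'olya--Szeg\H{o} deficit
$$D(u) := \norma{\nabla u}_{L^2(\Omega)}^2 - \norma{\nabla u^\sharp}_{L^2(\Omega^\sharp)}^2$$
in terms of $\norma{v - u^\sharp}_\infty$, and the quantitative P\'olya--Szeg\H{o} inequality of Theorem \ref{poliaszego}, which controls the $L^1$ asymmetry of $u$ by a power of $D(u)$.

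For the deficit bound I would prove
$$D(u) \leq \norma{f}_1 \norma{v - u^\sharp}_\infty.$$
Testing \eqref{mainproblem} with $u$ gives $\norma{\nabla u}_2^2 = \int_\Omega fu$, and the Hardy--Littlewood inequality yields $\int_\Omega fu \leq \int_0^{|\Omega|} f^*(s) u^*(s)\, ds$. Setting $F(s) := \int_0^s f^*(\sigma)\, d\sigma$, an integration by parts gives $\int_0^{|\Omega|} f^* u^*\, ds = \int_0^{|\Omega|} F(s)(-(u^*)'(s))\, ds$, while the change of variable $s = \omega_n r^n$ rewrites $\norma{\nabla u^\sharp}_2^2$ as $\int_0^{|\Omega|} n^2 \omega_n^{2/n} s^{2(n-1)/n} ((u^*)'(s))^2\, ds$. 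Talenti's pointwise ODE inequality reads $-(u^*)'(s) \leq -(v^*)'(s) = F(s)/(n^2 \omega_n^{2/n} s^{2(n-1)/n})$; using it to replace the outer factor $-(u^*)'$ by $-(v^*)'$ in
$$\int f^\sharp u^\sharp - \norma{\nabla u^\sharp}_2^2 = \int_0^{|\Omega|} (-(u^*)')\bigl(F(s) - n^2 \omega_n^{2/n} s^{2(n-1)/n}(-(u^*)')\bigr) ds,$$
followed by a second integration by parts, collapses the right-hand side to $\int f^\sharp (v - u^\sharp) \leq \norma{f}_1 \norma{v - u^\sharp}_\infty$.

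Next, applying Theorem \ref{poliaszego} to $u$ produces an inequality of the form
$$\inf_{x_0 \in \R^n} \norma{u - u^\sharp(\cdot + x_0)}_{L^1(\R^n)} \leq C(n)\, \Phi(\norma{u}_\infty, |\Omega|)\, D(u)^{\tilde{\theta}_1}$$
for some $\tilde{\theta}_1 = \tilde{\theta}_1(n) > 0$ and an explicit function $\Phi$. Substituting the deficit bound contributes a factor $\norma{f}_1^{\tilde{\theta}_1} \norma{v - u^\sharp}_\infty^{\tilde{\theta}_1}$, after which I would convert the $u$-dependent pieces into norms of $f$ and $|\Omega|$ via the Talenti comparison $\norma{u}_\infty \leq \norma{v}_\infty$ together with the explicit radial formula $v(0) = \int_0^{|\Omega|} F(s)/(n^2 \omega_n^{2/n} s^{2(n-1)/n})\, ds$ and a H\"older interpolation between $\norma{f}_1$ and $\norma{f}_{2n/(n+2)}$; matching the resulting exponents produces the powers $1-2\tilde{\theta}_1$, $1-3\tilde{\theta}_1$, and $1+(2/n-1)\tilde{\theta}_1$ in \eqref{esti_L1}.

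The main obstacle is the deficit estimate: a less careful use of Talenti's ODE, for instance a Cauchy--Schwarz on the mixed term $(-(u^*)')\bigl((-(v^*)') - (-(u^*)')\bigr)$, would leave a factor like $\norma{f}_2 \norma{v-u^\sharp}_2$ instead of $\norma{f}_1 \norma{v-u^\sharp}_\infty$, propagating a worse exponent into the final inequality. Once $D(u) \leq \norma{f}_1 \norma{v-u^\sharp}_\infty$ is in hand, the remaining exponent bookkeeping is essentially routine.
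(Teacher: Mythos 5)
Your first ingredient is fine in substance: the paper's Lemma \ref{lem_grad} proves exactly the deficit bound $\int_\Omega|\nabla u|^2-\int_{\Omega^\sharp}|\nabla u^\sharp|^2\le 2\norma{f}_1\norma{v-u^\sharp}_\infty$, but by a cleaner route (minimality of $v$ for the Dirichlet functional on $\Omega^\sharp$, tested with $u^\sharp$, combined with Talenti's inequality $\int_\Omega|\nabla u|^2\le\int_{\Omega^\sharp}|\nabla v|^2$ of Remark \ref{tal_grad}); this sidesteps the pointwise a.e.\ inequality $-(u^*)'(s)\le-(v^*)'(s)$ that your sketch invokes, which is stronger than Talenti's integrated comparison and needs separate justification when $\mu$ is not absolutely continuous.

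The genuine gap is in your second step. The quantitative P\'olya--Szeg\H{o} inequality (Theorem \ref{polya_quant}, not Theorem \ref{poliaszego}) does \emph{not} control the $L^1$ asymmetry by a power of the deficit alone: its right-hand side is $C(n)\left[M_{u^\sharp}(E(u)^r)+E(u)\right]^s$, where $M_{u^\sharp}(\delta)$ is the relative measure of $\{|\nabla u^\sharp|<\delta\}\cap\{0<u^\sharp<\norma{u}_\infty\}$, and for a generic Sobolev function this term is not bounded by any power of $\delta$ (a nearly flat radial profile makes it of order one while the deficit is tiny). Hence an inequality of the form $\inf_{x_0}\norma{u-u^\sharp(\cdot+x_0)}_{L^1(\R^n)}\le C(n)\,\Phi(\norma{u}_\infty,|\Omega|)\,D(u)^{\tilde\theta_1}$ is simply not available off the shelf, and this is where the paper does the bulk of its work: using that $u$ solves \eqref{mainproblem}, it proves $M_{u^\sharp}(\varepsilon^r)\le K_4\varepsilon^{\theta_5}$ (Proposition \ref{Musharp}) via the explicit lower bound $|\nabla v|(x)\ge|x|/n$ (Proposition \ref{delta0}), the exceptional level set $I$ of Lemma \ref{lemmaI}, the truncation level $t_{\varepsilon,\beta}$, and the comparison of $\mu$ with $\nu$ (Propositions \ref{prop1} and \ref{prop2}). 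Your proposal omits this entirely, so as written it does not prove the theorem; the remaining bookkeeping you describe (replacing $\norma{\nabla u^\sharp}_2$ by $\norma{\nabla v}_2\le K_1\norma{f}_{\frac{2n}{n+2}}$ via \eqref{stima:lq:gradv} and rescaling away the normalizations $|\Omega|=\norma{f}_1=1$) does match the paper's final step.
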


In order to prove this result, we observe  in Lemma \ref{lem_grad} that a small $L^\infty$ distance between $u^\sharp$ and $v$ forces the P\'olya-Szeg\H o inequality, that is 
$$\int_{\Omega^\sharp} |\nabla u^\sharp|^2\, dx\le \int_\Omega \abs{\nabla u}^2\, dx,  
$$
to hold almost as an equality.
So, at this stage, it is natural to consider the P\'olya-Szeg\H o quantitative result proved in \cite{polyaquantitativa}:
\begin{equation*}
    \inf_{x_0 \in \R^n} \int_{\R^n} \abs{u(x)-u^\sharp(x+x_0)} \, dx \leq C \left[M_{u^\sharp}(E(u)^r)+E(u)\right]^s
\end{equation*}
where 
$$
E(u)= \frac{\displaystyle{\int_{\R^n} \abs{\nabla u}}}{\displaystyle{\int_{\R^n} \abs{\nabla u^{\sharp}}}}-1 \text{ and } M_{u^\sharp}(\delta)=\dfrac{\abs{\left\{|\nabla u^{\sharp}|\leq\delta\right\}\cap \left\{0<u^\sharp<\norma{u}_{\infty}\right\}}}{\abs{\{\abs{u}>0\}}}.
$$
Hence, to prove the closeness of $u$ to $u^\sharp$ 
it remains to bound from above these quantities in terms of $||v-u||_\infty$. 

Eventually, to conclude the proof of Theorem \ref{main_theorem}, we show the closeness of $f$ and $f^\sharp$, that is the third and last step, contained in Theorem \ref{confrontof}. Here, we prove that it is possible to bound the $L^m$ distance between $f$ and $f^\sharp$, for all $m<2$, provided $f\in L^2$.


\begin{teorema}\label{confrontof}
    Let $f \in L^2(\Omega)$ be a non-negative function and let $u$, $v$ be the solutions to \eqref{mainproblem} and \eqref{symmetricproblem} respectively. Then, for  all $1\leq m <2$ there exist $\tilde{\theta}_2=\tilde{\theta}_2(n,m)>0$, $\tilde{\theta}_3=\tilde{\theta}_3(n,m)>0$  and a positive constant $\widetilde C_3= \widetilde C_3(n,m)>0$ such that
    $$\inf_{x_0\in\R^n}\norma{f-f^{\sharp}(\cdot + x_0)}_{L^m(\R^n)}\leq \widetilde C_3 \norma{f}^{\tilde{\theta}_2}_2 \norma{f}_1^{1-\tilde{\theta}_3-\tilde{\theta}_2}\abs{\Omega}^{\frac{\tilde{\theta}_2}{2}+\frac{1-m}{m}+\left(\frac{n-2}{n}\right)\tilde{\theta}_3}||v-u^\sharp||_{{L^\infty(\Omega)}}^{\tilde{\theta}_3} .$$
\end{teorema}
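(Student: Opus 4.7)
The strategy parallels that of Theorem \ref{confrontol1}: bound the Hardy--Littlewood deficit
$$D_{HL}(f,u) := \int_{\Omega^\sharp} f^\sharp u^\sharp\,dx - \int_\Omega fu\,dx \ge 0$$
in terms of $\|v-u^\sharp\|_\infty$, convert this into an $L^1$ bound for $f - f^\sharp(\cdot+x_0)$ via a quantitative Hardy--Littlewood inequality, and then interpolate to $L^m$. For the deficit bound I would follow Talenti's derivation pointwise in the decreasing rearrangements. For each $s\in(0,|\Omega|)$, the ``Talenti deficit at level $s$'' is
$$D(s) := \int_0^s f^*(\sigma)\,d\sigma - \int_{\{u>u^*(s)\}} f\,dx \ge 0,$$
where non-negativity comes from Hardy--Littlewood on $\{u>u^*(s)\}$ (a set of measure $s$), and Talenti's isoperimetric argument yields $(u^*)'(s)-(v^*)'(s) \ge D(s)/(n^2\omega_n^{2/n} s^{2-2/n})$. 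Integrating from $s$ to $|\Omega|$ and using $u^*(|\Omega|)=v^*(|\Omega|)=0$ gives
$$\int_0^{|\Omega|}\frac{D(s)}{n^2\omega_n^{2/n}\,s^{2-2/n}}\,ds \le v^*(0)-u^*(0) \le \|v-u^\sharp\|_\infty.$$
On the other hand, layer-cake combined with the substitution $t=u^*(s)$ yields $D_{HL}(f,u) = \int_0^{|\Omega|} D(s)\,|(u^*)'(s)|\,ds$, and $|(u^*)'(s)|\le|(v^*)'(s)|=(\int_0^s f^*)/(n^2\omega_n^{2/n} s^{2-2/n})$ by the same Talenti inequality, with $\int_0^s f^* \le \|f\|_1$. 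Combining,
$$D_{HL}(f,u) \le \|f\|_1\,\|v-u^\sharp\|_\infty.$$

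The second step converts this deficit bound into an $L^1$ estimate on $f-f^\sharp(\cdot+x_0)$ through a quantitative Hardy--Littlewood inequality. By layer-cake,
$$D_{HL}(f,u) = \int_0^\infty\!\!\int_0^\infty \bigl[\min(\mu_f(s),\mu_u(t)) - |\{f>s\}\cap\{u>t\}|\bigr]\,dt\,ds,$$
and each non-negative integrand measures how close $\{f>s\}$ and $\{u>t\}$ are to being nested. Choosing $t(s)$ with $\mu_u(t(s))=\mu_f(s)$ makes $|\{f>s\}\Delta\{u>t(s)\}|$ small in an integrated sense; combined with Theorem \ref{sopralivelli}, which forces $\{u>t\}$ to be close in Fraenkel asymmetry to a ball $B(-x_0,r)$ (with $x_0$ the translation supplied by Theorem \ref{confrontol1}), and the quantitative isoperimetric inequality applied to $\{f>s\}$, this forces $\{f>s\}$ to be close to $B(-x_0,r_f(s))$. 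Integrating in $s$ in the representation $\|f-f^\sharp(\cdot+x_0)\|_1 = \int_0^\infty |\{f>s\}\Delta B(-x_0,r_f(s))|\,ds$ then yields the $L^1$ bound; the factor $|\Omega|^{\alpha(n-2)/n}$ arises from the Talenti weight $s^{2-2/n}$ and the isoperimetric constant appearing in Theorem~\ref{sopralivelli}.

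Finally, log-convexity of $L^p$ norms gives, for $1\le m<2$,
$$\|f-f^\sharp(\cdot+x_0)\|_m \le \|f-f^\sharp(\cdot+x_0)\|_1^{2/m-1}\,\|f-f^\sharp(\cdot+x_0)\|_2^{2-2/m}.$$
Combining the $L^1$ bound from the previous step with the trivial estimate $\|f-f^\sharp(\cdot+x_0)\|_2 \le 2\|f\|_2$ produces the desired inequality, with $\tilde\theta_2 = 2-2/m$ and $\tilde\theta_3 = \alpha(2/m-1)$; the identity $\tilde\theta_2/2+(1-m)/m=0$ is precisely what makes the powers of $|\Omega|$ combine into the stated exponent $(n-2)\tilde\theta_3/n$. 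The main obstacle is the quantitative Hardy--Littlewood step: the deficit controls an $(s,t)$-integral, and one must extract an estimate concentrated on the diagonal $t=t(s)$ while preserving the correct powers of $\|f\|_1$ and $|\Omega|$, and while handling the singular weight $s^{2-2/n}$ at small $s$.
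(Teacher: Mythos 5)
Your first step is fine in its conclusion, but it is an unnecessary detour and, as written, not quite rigorous: the identity $D_{HL}(f,u)=\int_0^{|\Omega|}D(s)\,|(u^*)'(s)|\,ds$ requires $\mu$ to be absolutely continuous and strictly decreasing (plateaus of $u$ produce levels $t$ outside the range of $u^*$, and in general only the inequality $D_{HL}\ge\int D(s)|(u^*)'|\,ds$ survives, which is the wrong direction for you). The bound you want follows much more simply from $u^\sharp\le v$ and the variational argument of Lemma \ref{lem_grad}: $\int_{\Omega^\sharp}f^\sharp u^\sharp-\int_\Omega fu\le\int_{\Omega^\sharp}f^\sharp v-\int_\Omega fu=\int_{\Omega^\sharp}\abs{\nabla v}^2-\int_\Omega\abs{\nabla u}^2\le 2\norma{f}_1\norma{v-u^\sharp}_\infty$.

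The genuine gap is your second step, which is the heart of the theorem and which you explicitly leave open ("the main obstacle"). Smallness of the two-dimensional integral $\iint[\min(\mu_f(s),\mu_u(t))-\abs{\{f>s\}\cap\{u>t\}}]\,dt\,ds$ does not yield smallness along the diagonal $t=t(s)$: the deficit $\int f^\sharp u^\sharp-\int fu$ is completely insensitive to how the mass of $f$ is distributed within a level set or plateau of $u$, so no extraction argument can work without quantitative nondegeneracy of $u$ (control of plateaus and of the smallness set of $\abs{\nabla u^\sharp}$) — exactly the information that is delicate for $u$ and that the paper avoids by applying the Cianchi--Ferone quantitative Hardy--Littlewood inequality (Theorem \ref{cianchi_hardy}) with $h=v$, whose decreasing rearrangement is explicit via \eqref{v_explicit}, so that the admissibility conditions $\theta_p(s)<\infty$ and $\norma{g}_{\Lambda^q_p}<\infty$ can be verified (this is where $f\in L^2$ enters, through a Lorentz embedding), taking $g=f\chi_{\Omega\cap\Omega^\sharp}$ and then bounding the resulting deficit $\int_{\Omega^\sharp}(g^\sharp-g)v$ by the energy identities together with the $L^2$ closeness of $u$ and $u^\sharp$ from \eqref{gagliardo_l2} and the estimate $\abs{\Omega\Delta\Omega^\sharp}\le \tilde C_4\varepsilon^{1/4}$. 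Moreover, your appeal to "the quantitative isoperimetric inequality applied to $\{f>s\}$" is unjustified: $f$ is merely an $L^2$ function, so its superlevel sets carry no perimeter control; any closeness of $\{f>s\}$ to a ball would have to come from closeness in measure to $\{u>t(s)\}$, which is precisely what has not been established. Your final interpolation step is harmless, but without the missing quantitative Hardy--Littlewood argument the proposal does not amount to a proof.
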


We prove this result 
using the quantitative version of the Hardy-Littlewood inequality (see \eqref{hardy_littlewood}) proved in \cite{cianchi_ferone}.

The paper is organized as follows: in Section \ref{section2} we introduce the notation and the preliminary results that we will need throughout the paper;  Section \ref{section3} is devoted to the proof of Theorem \ref{asimmetryindex};  in Section \ref{section4}  we prove  Theorem \ref{confrontol1}, and in Section \ref{section5} we prove Theorem \ref{confrontof}. In Section \ref{section6} we prove Theorem \ref{main_theorem} and we collect a list of open problems. Finally, in Appendix \ref{appendix} we bound the asymmetry index $\alpha(\Omega)$ in terms of the difference of the $L^2$ norm of $v$ and $u$.
\section{Notation and preliminary results}
\label{section2}

Throughout this article, we will denote by $|\Omega|$ the Lebesgue measure of an open and bounded set of $\mathbb{R}^n$, with $n\geq 2$, and by $P(\Omega)$ the perimeter of $\Omega$ in the sense of De Giorgi (see \cite{ambrosio2000functions}).\\
For convenience, we will denote by $||\cdot||_p$ the norm $||\cdot||_{L^p(\Omega^\sharp)}$ or $||\cdot||_{L^p(\Omega)}$; otherwise, we will write the space explicitly.

If $\Omega$ is an open and Lipschitz set, it holds the following coarea formula. Some references for results relative to the sets of finite perimeter and the coarea formula are, for instance, \cite{ambrosio2000functions,maggi2012sets}.

 \begin{teorema}[Coarea formula]
 Let $f:\Omega\to\R$ be a Lipschitz function and let $u:\Omega\to\R$ be a measurable function. Then,
 \begin{equation}
   \label{coarea}
   {\displaystyle \int _{\Omega}u|\nabla f(x)|dx=\int _{\mathbb {R} }dt\int_{(\Omega\cap f^{-1}(t))}u(y)\, d\mathcal {H}^{1}(y)}.
 \end{equation}
 \end{teorema}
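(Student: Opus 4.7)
The plan is to establish the coarea formula through the classical approximation strategy: first handle the smooth case using Sard's theorem and the implicit function theorem, then extend by linearity and monotone convergence in $u$, and finally by Lipschitz approximation in $f$. (I note that $\mathcal{H}^{1}$ in the displayed formula is presumably a typographical slip for $\mathcal{H}^{n-1}$, the natural $(n-1)$-dimensional Hausdorff measure of a level slice of a function on $\R^n$.)

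The first step is to prove the identity in the model case where $f \in C^1(\Omega)$ and $u = \chi_E$ is the indicator function of a Borel set $E \subset \Omega$. By Sard's theorem the set of critical values of $f$ has Lebesgue measure zero, and on the complement of the critical set the implicit function theorem gives local coordinates $(y',t)$ adapted to the foliation $\{f = t\}$ in which the change-of-variables Jacobian equals $|\nabla f|^{-1}$. A partition of unity glues these local slicing identities into the global formula
\[
\int_E |\nabla f(x)|\, dx = \int_{\R} \mathcal{H}^{n-1}\!\left(E \cap f^{-1}(t)\right) dt,
\]
with the contribution from $\{|\nabla f|=0\}$ being harmless because the integrand vanishes there and the corresponding values of $t$ form a null set.

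From here I would upgrade to simple $u$ by linearity, to non-negative measurable $u$ by monotone convergence, and to a general measurable $u$ by decomposing $u = u^+ - u^-$ whenever one of the resulting integrals is finite. To remove the $C^1$ assumption on $f$ and reach the Lipschitz case stated in the theorem, I would invoke Rademacher's theorem (ensuring $\nabla f$ is defined almost everywhere) together with the Lusin-type approximation of Lipschitz functions by $C^1$ functions off sets of arbitrarily small Lebesgue measure. Stability of both sides of the identity under this approximation then closes the argument.

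The main obstacle is measure-theoretic rather than geometric: one must verify that $t \mapsto \int_{f^{-1}(t)} u \, d\mathcal{H}^{n-1}$ is Lebesgue measurable for general measurable $u$, and that the Hausdorff measure of the level slices passes to the limit under the Lusin-type approximation of $f$. These points are standard but require care; they are treated thoroughly in \cite{ambrosio2000functions,maggi2012sets}, which I would ultimately cite for the technical bookkeeping rather than reproduce here.
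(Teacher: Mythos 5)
The paper does not prove this statement at all: the coarea formula is recalled in Section 2 as a classical preliminary, with the proof delegated to the cited monographs \cite{ambrosio2000functions,maggi2012sets} (and indeed the $\mathcal{H}^{1}$ in the display is, as you note, a slip for $\mathcal{H}^{n-1}$). Your outline is the standard textbook route and is sound in structure — smooth case via Sard plus the implicit function theorem and a partition of unity, then indicator functions $\to$ simple $\to$ nonnegative measurable $u$ by linearity and monotone convergence, then Lipschitz $f$ by Rademacher and Lusin-type $C^1$ approximation. The one place where you wave your hands over the actual mathematical content is the last step: to show that the exceptional set $A$ (where $f$ disagrees with its $C^1$ approximation, or where $\nabla f$ fails to exist) contributes negligibly to the right-hand side, you need the Eilenberg/coarea \emph{inequality}
\begin{equation*}
\int_{\R}^{*}\mathcal{H}^{n-1}\bigl(A\cap f^{-1}(t)\bigr)\,dt \;\le\; C(n)\,\mathrm{Lip}(f)\,\abs{A},
\end{equation*}
together with the measurability of $t\mapsto\int_{f^{-1}(t)}u\,d\mathcal{H}^{n-1}$; "stability of both sides under the approximation" is exactly this estimate, not a soft limiting argument. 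Since you ultimately cite the same references the paper does for that bookkeeping, your proposal is acceptable as a proof sketch, but be aware that the Eilenberg inequality is the genuine ingredient hiding in the step you deferred.
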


The rest of this Section is dedicated to stating some useful results on the rearrangement of functions, on quantitative inequalities and rescaling properties.

\subsection{Rearrangement of functions}
 For a general overview of this topic, we refer, for instance, to \cite{kes}.

 \begin{definizione}\label{distribution:function}
	Let $u: \Omega \to \R$ be a measurable function, the \emph{distribution function} of $u$ is the function $\mu : [0,+\infty[\, \to [0, +\infty[$ defined as the measure of the superlevel sets of $u$, i.e.
	$$
	\mu(t)= \abs{\Set{x \in \Omega \, :\,  \abs{u(x)} > t}}.
	$$
\end{definizione}

In particular, if $u$ is the solution to \eqref{mainproblem} and $v$ is the solution to \eqref{symmetricproblem}, we fix the following notations, for $t\geq0$
$$U_t=\left\lbrace x\in \Omega : u(x)>t\right\rbrace, \quad \mu(t)=\abs{U_t}$$
and
$$V_t=\left\lbrace x\in \Omega^\sharp : v(x)> t\right\rbrace, \quad \nu(t)=\abs{V_t}.$$
By using the Coarea formula \eqref{coarea}, one can deduce the following expression for $\mu$

\begin{equation*}
    \mu(t)=\abs{\{u>t\}\cap \{|\nabla u |=0\}}+ \int_t^{+\infty} \left(\int_{u=s}\frac{1}{\abs{\nabla u}}\, d\, \mathcal{H}^{n-1}\right)\, ds,
\end{equation*}

as a consequence, for almost all $t\in (0,+\infty)$,
    \begin{equation}\label{brothers1}
        \infty>-\mu'(t)\geq\displaystyle \int_{u=t}\dfrac{1}{|\nabla u|}d\mathcal{H}^{n-1}
    \end{equation}
and if $\mu$ is absolutely continuous, equality holds in \eqref{brothers1}.

\begin{definizione} \label{decreasing:rear}
	Let $u: \Omega \to \R$ be a measurable function, the \emph{decreasing rearrangement} of $u$, denoted by $u^\ast(\cdot)$,is defined as
$$u^*(s)=\inf\{t\geq 0:\mu(t)<s\}.$$
	\end{definizione}
 From Definitions \ref{distribution:function} and \ref{decreasing:rear}, one can prove that
	 $$u^\ast (\mu(t)) \leq t, \quad \forall t\ge 0,$$ 
  $$\mu (u^\ast(s)) \leq s \quad \forall s \ge 0.$$ 

	\begin{oss}\label{inverse}
	Let us notice that the function $\mu(\cdot)$ is decreasing and right continuous and the function $u^\ast(\cdot)$ is its generalized inverse.
	\end{oss}

	\begin{definizione}\label{rear}
	 The \emph{Schwartz rearrangement} of $u$ is the function $u^\sharp $ whose superlevel sets are balls with the same measure as the superlevel sets of $u$. 
	\end{definizione}
		We have the following relation between $u^\sharp$ and $u^*$:
	$$u^\sharp (x)= u^*(\omega_n\abs{x}^n),$$
 where $\omega_n$ is the measure of the unit ball in $\R^n$. One can easily check that the functions $u$, $u^*$ e $u^\sharp$ are equi-distributed, i.e. they have the same distribution function, and it holds
$$ \displaystyle{\norma{u}_{p}=\norma{u^*}_{p}=\lVert{u^\sharp}\rVert_{p}}, \quad \text{for all } p\ge1.$$
The following Lemma characterizes the absolute continuity of $\mu$ (we refer, for instance, to \cite{Brothers1988, cianchi_fusco}).

\begin{lemma}
    \label{asscontmu}
Let $u\in W^{1,p}(\R^n)$, with $p\in(1,+\infty)$. The distribution function $\mu$ of $u$ is absolutely continuous if and only if 
\begin{equation*}
    \abs{\{0<u<  ||u^\sharp||_\infty\}\cap \{|\nabla u^\sharp| =0\}}=0.
\end{equation*}

\end{lemma}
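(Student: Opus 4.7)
The plan is to apply the inequality \eqref{brothers1} to $u^\sharp$, integrate it over $(0,\norma{u^\sharp}_\infty)$, and compare the result with the coarea formula; the resulting gap will be identified with the measure in the statement.

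Since $u$ and $u^\sharp$ are equidistributed, they share the same distribution function $\mu$, so \eqref{brothers1} applied to $u^\sharp$ reads
$$-\mu'(t)\geq \int_{u^\sharp=t}\frac{1}{\abs{\nabla u^\sharp}}\,d\mathcal{H}^{n-1}\qquad\text{for a.e. }t>0,$$
with equality for a.e.\ $t$ if and only if $\mu$ is absolutely continuous. Integrating on $(0,\norma{u^\sharp}_\infty)$ and using the coarea formula \eqref{coarea} applied to $u^\sharp$ on the right-hand side,
$$\int_0^{\norma{u^\sharp}_\infty}-\mu'(t)\,dt\geq \abs{\{0<u^\sharp<\norma{u^\sharp}_\infty\}\cap\{\abs{\nabla u^\sharp}>0\}}.$$

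On the other hand, the Lebesgue decomposition of the monotone, right-continuous function $\mu$ gives
$$\int_0^{\norma{u^\sharp}_\infty}-\mu'(t)\,dt\leq \mu(0^+)-\mu(\norma{u^\sharp}_\infty^-)=\abs{\{0<u^\sharp<\norma{u^\sharp}_\infty\}},$$
with equality if and only if $\mu$ carries no singular part on $[0,\norma{u^\sharp}_\infty]$, i.e.\ $\mu$ is absolutely continuous. Chaining these two inequalities,
$$\abs{\{0<u^\sharp<\norma{u^\sharp}_\infty\}\cap\{\abs{\nabla u^\sharp}>0\}}\leq \int_0^{\norma{u^\sharp}_\infty}-\mu'(t)\,dt\leq \abs{\{0<u^\sharp<\norma{u^\sharp}_\infty\}},$$
so the outer gap is exactly $\abs{\{0<u^\sharp<\norma{u^\sharp}_\infty\}\cap\{\abs{\nabla u^\sharp}=0\}}$. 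This measure therefore vanishes if and only if both inequalities above are simultaneously equalities, which by the two equality characterizations just recorded is equivalent to the absolute continuity of $\mu$. This proves the claim (with the measure of the asserted critical set read through the rearrangement $u^\sharp$, which has the same distribution function as $u$).

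The principal subtlety is the transfer of the equality case in \eqref{brothers1} from $u$ to $u^\sharp$: because $\mu$ is invariant under rearrangement, the characterization of its absolute continuity via that identity is inherited automatically by $u^\sharp$, and this is the key observation that allows one to localize the obstruction to absolute continuity in the critical set of the radial rearrangement. If $\norma{u^\sharp}_\infty=+\infty$, the argument requires no change once the integrals are read as monotone limits.
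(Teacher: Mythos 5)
The paper does not actually prove Lemma \ref{asscontmu}: it quotes it from \cite{Brothers1988, cianchi_fusco}. So your argument has to stand on its own, and it does not. The sandwich between the coarea identity and the Lebesgue decomposition of the monotone function $\mu$ is a reasonable frame, and it does yield the easy implication: if $\abs{\{0<u^\sharp<\norma{u^\sharp}_\infty\}\cap\{\nabla u^\sharp=0\}}=0$, then your two bounds pinch, equality $\int_0^{\norma{u^\sharp}_\infty}(-\mu')=\mu(0^+)-\mu(\norma{u^\sharp}_\infty^-)$ holds, and $\mu$ has no singular part in the open interval. The converse, however, rests entirely on the claim that absolute continuity of $\mu$ forces equality a.e.\ in \eqref{brothers1} \emph{for $u^\sharp$}, and that claim is precisely the nontrivial content of the lemma; you do not prove it, and the paper's parenthetical remark after \eqref{brothers1} cannot be cited as a general fact. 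Indeed, writing $\mu(t)=\abs{\{u^\sharp>t\}\cap\{\nabla u^\sharp=0\}}+\int_t^{\infty}\int_{\{u^\sharp=s\}}\frac{1}{\abs{\nabla u^\sharp}}\,d\mathcal{H}^{n-1}\,ds$, equality a.e.\ in \eqref{brothers1} means the first (monotone) term has vanishing derivative a.e.; absolute continuity of $\mu$ only makes that term absolutely continuous, which does not force its derivative to vanish. Your justification --- that the characterization ``is inherited automatically by $u^\sharp$'' because $\mu$ is rearrangement invariant --- merely transfers a statement you never established. Note that your proof uses nothing about $u^\sharp$ beyond equimeasurability, so run verbatim with $u$ in place of $u^\sharp$ it would ``prove'' that $\mu$ is absolutely continuous iff $\abs{\{0<u<\norma{u}_\infty\}\cap\{\nabla u=0\}}=0$; this is false in dimension $n\ge 2$ (co-area irregular functions in the sense of Almgren--Lieb have a critical set of positive measure whose image measure has an absolutely continuous part, so $\mu$ can be absolutely continuous while the critical set is fat). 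What rescues the statement for $u^\sharp$ is its radial structure: the radial profile of $u^\sharp$ is locally absolutely continuous on $(0,+\infty)$, hence satisfies Luzin's property N in one variable, so the critical values of $u^\sharp$ in $(0,\norma{u^\sharp}_\infty)$ form a null set and $t\mapsto\abs{\{u^\sharp>t\}\cap\{\nabla u^\sharp=0\}}$ is a purely singular monotone function. This is the missing lemma, and it is where the cited references do the actual work.

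Two smaller points. First, the ``only if'' half of your first equality characterization (equality a.e.\ in \eqref{brothers1} implies $\mu$ absolutely continuous) is false even for radial functions: if $u^\sharp$ has a plateau at an intermediate level, $\mu$ jumps there although equality in \eqref{brothers1} holds for a.e.\ $t$; your final chain can be rearranged so as not to need that direction, but the sentence as written is wrong. Second, equality in your second inequality (with $\mu(0^+)-\mu(\norma{u^\sharp}_\infty^-)$ on the right) only excludes singular mass of $-d\mu$ on the open interval $(0,\norma{u^\sharp}_\infty)$: a plateau at the top level produces a jump of $\mu$ at $t=\norma{u^\sharp}_\infty$ that neither your argument nor the stated condition detects. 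This caveat is inherited from the statement itself (absolute continuity should be understood on $[0,\norma{u^\sharp}_\infty)$, as in \cite{Brothers1988}), but you should flag it rather than assert equality ``if and only if $\mu$ carries no singular part on $[0,\norma{u^\sharp}_\infty]$''.
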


An important property of the decreasing rearrangement is the Hardy-Littlewood inequality, see \cite{hardy_classico}. 

\begin{teorema}[Hardy-Littlewood inequaliy]
Let us consider $h\in L^p(\Omega)$ and $g\in L^{p'}(\Omega)$. Then 
    \begin{equation}\label{hardy_littlewood}
 \int_{\Omega} \abs{h(x)g(x)} \, dx \le \int_{0}^{\abs{\Omega}} h^*(s) g^*(s) \, ds.
\end{equation}
\end{teorema}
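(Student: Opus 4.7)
The plan is to prove the Hardy--Littlewood inequality via a two-parameter layer-cake decomposition, which reduces everything to measure-theoretic comparisons of superlevel sets.

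First, I would use the layer-cake representation
\[
|h(x)| = \int_0^\infty \chi_{\{|h|>t\}}(x)\, dt, \qquad |g(x)| = \int_0^\infty \chi_{\{|g|>s\}}(x)\, ds,
\]
multiply, and integrate over $\Omega$. By Tonelli's theorem,
\[
\int_\Omega |h(x)g(x)|\, dx = \int_0^\infty \int_0^\infty \bigl|\{|h|>t\}\cap\{|g|>s\}\bigr|\, dt\, ds.
\]

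Next, I would apply the trivial bound $|A\cap B|\le \min(|A|,|B|)$ with $A=\{|h|>t\}$ and $B=\{|g|>s\}$ and use Definition \ref{distribution:function} to recognise these measures as the distribution functions $\mu_h(t)$ and $\mu_g(s)$, obtaining
\[
\int_\Omega |h(x)g(x)|\, dx \le \int_0^\infty\!\int_0^\infty \min\bigl(\mu_h(t),\mu_g(s)\bigr)\, dt\, ds.
\]

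Finally, I would show that the right-hand side above equals $\int_0^{|\Omega|} h^*(\sigma) g^*(\sigma)\, d\sigma$. Applying the same layer-cake identity to $h^*$ and $g^*$ as functions on $[0,|\Omega|]$, and using that $h^*$ is the generalized inverse of $\mu_h$ (Remark \ref{inverse}), the superlevel sets of $h^*$ and $g^*$ are intervals of the form $[0,\mu_h(t))$ and $[0,\mu_g(s))$. Hence
\[
\int_0^{|\Omega|} h^*(\sigma)g^*(\sigma)\, d\sigma = \int_0^\infty\!\int_0^\infty \bigl|[0,\mu_h(t))\cap[0,\mu_g(s))\bigr|\, dt\, ds = \int_0^\infty\!\int_0^\infty \min\bigl(\mu_h(t),\mu_g(s)\bigr)\, dt\, ds,
\]
which combined with the previous inequality yields \eqref{hardy_littlewood}.

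The substantive point, and what might require a bit of care to state cleanly, is the identification of the superlevel sets of the decreasing rearrangement with the intervals $[0,\mu_h(t))$ (modulo $\mathcal{H}^0$-null sets at the endpoints), which is a consequence of the fact that $h^*$ is non-increasing and equi-distributed with $|h|$. The $L^p$/$L^{p'}$ assumption is only needed a posteriori to ensure both sides are finite via Hölder's inequality on the right-hand side; the pointwise inequality between the integrands is purely measure-theoretic.
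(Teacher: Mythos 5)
Your proof is correct and complete; the two-parameter layer-cake argument (Tonelli, the bound $|A\cap B|\le\min\bigl(|A|,|B|\bigr)$, and the identification of the superlevel sets of $h^*$ and $g^*$ with the nested intervals $[0,\mu_h(t))$ and $[0,\mu_g(s))$, up to null sets, which you correctly flag) is the standard proof of \eqref{hardy_littlewood}, and your closing remark that the $L^p$--$L^{p'}$ hypothesis only guarantees finiteness via H\"older is also accurate. There is no in-paper argument to compare against: the paper states this theorem as a classical result with a citation and does not prove it, so your proposal simply supplies the standard proof that the authors omit.
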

By  choosing  $h=\chi_{\left\lbrace\abs{u}>t\right\rbrace}$ in \eqref{hardy_littlewood}, one has
\begin{equation*}
\int_{\abs{u}>t} \abs{h(x)} \, dx \le \int_{0}^{\mu(t)} h^*(s) \, ds.
\end{equation*}

If we require that the function $u$ is a \emph{Sobolev function}, i.e. $u\in W^{1,p}_0(\Omega)$, then also $u^\sharp$ is a Sobolev function and, moreover,  the gradient does not increase under symmetrization, as a consequence of the  P\'olya-Szegő inequality (see \cite{polya-szego}).
\begin{teorema}[P\'olya-Szegő inequality]
    \label{poliaszego}
    Let $u \in W^{1,p}_0(\Omega)$, then $u^{\sharp} \in W^{1,p}_0(\Omega^\sharp)$ and
	\begin{equation}\label{psi}
		\lVert \nabla u^{\sharp} \rVert_{p} \leq \norma{\nabla u}_{p}.
	\end{equation}
\end{teorema}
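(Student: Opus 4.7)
The plan is to establish \eqref{psi} via the classical slicing argument: combine the coarea formula with Hölder's inequality on each level set and the isoperimetric inequality, and observe that equality holds for $u^\sharp$. By density I may first assume $u\in C_c^\infty(\Omega)$ and recover the general case by approximation, exploiting the non-expansivity of the decreasing rearrangement in $L^p$ together with weak compactness in $W^{1,p}$ once a uniform bound on $\|\nabla u_k^\sharp\|_p$ is obtained.

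The starting point is to apply \eqref{coarea} to rewrite
$$\int_\Omega |\nabla u|^p\,dx = \int_0^\infty \left(\int_{\{u=t\}} |\nabla u|^{p-1}\,d\mathcal{H}^{n-1}\right)dt.$$
On each level set $\{u=t\}$ I would apply Hölder's inequality with exponents $p$ and $p'$, using the factorization $1 = |\nabla u|^{(p-1)/p}\cdot |\nabla u|^{-(p-1)/p}$, then invoke \eqref{brothers1} to control the integral of $|\nabla u|^{-1}$ by $-\mu'(t)$ and the standard isoperimetric inequality $P(U_t)\geq n\omega_n^{1/n}\mu(t)^{(n-1)/n}=P(U_t^\sharp)$ to control the perimeter. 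Chaining these inequalities gives
$$\int_{\{u=t\}} |\nabla u|^{p-1}\,d\mathcal{H}^{n-1}\;\geq\;\frac{P(U_t^\sharp)^p}{(-\mu'(t))^{p-1}}$$
for a.e. $t>0$.

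The crucial observation is that the right-hand side coincides with $\int_{\{u^\sharp=t\}} |\nabla u^\sharp|^{p-1}\,d\mathcal{H}^{n-1}$: since $u^\sharp$ is radially symmetric and decreasing, $|\nabla u^\sharp|$ is constant on each level set, so Hölder and \eqref{brothers1} both collapse to equalities and the chain above is sharp for $u^\sharp$. Integrating the pointwise comparison in $t$ and reapplying the coarea formula to the right-hand side yields \eqref{psi}. The main technical difficulty concerns the critical values of $u$, where $|\nabla u|$ vanishes on a set of positive $\mathcal{H}^{n-1}$-measure or $\mu$ fails to be differentiable: such $t$ must be removed from the slicing, but since \eqref{brothers1} is an inequality in the favorable direction they do not affect the estimate. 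Finally, the membership $u^\sharp \in W^{1,p}_0(\Omega^\sharp)$ is recovered a posteriori from the finiteness of the right-hand side of \eqref{psi} combined with the equidistribution $\|u^\sharp\|_p = \|u\|_p$.
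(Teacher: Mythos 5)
The paper does not prove Theorem \ref{poliaszego}; it quotes it from the classical literature (P\'olya--Szeg\H o, Talenti), so your sketch is to be measured against the standard slicing proof, whose architecture you have reproduced correctly for the function $u$ itself: coarea, H\"older on the level sets, the isoperimetric inequality and \eqref{brothers1} do give, for a.e.\ $t$,
\begin{equation*}
\int_{\{u=t\}}\abs{\nabla u}^{p-1}\,d\mathcal{H}^{n-1}\;\geq\;\frac{P(U_t^\sharp)^p}{(-\mu'(t))^{p-1}},
\end{equation*}
and your closing approximation argument (density, non-expansivity of rearrangement, weak lower semicontinuity) is fine for $p>1$. The genuine gap is in the step you call the ``crucial observation''. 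Constancy of $\abs{\nabla u^\sharp}$ on the spheres $\{u^\sharp=t\}$ makes H\"older and the isoperimetric inequality equalities, but it does \emph{not} make \eqref{brothers1} an equality for $u^\sharp$: equality there is governed by the structure of the critical set $\{\nabla u^\sharp=0\}\cap\{0<u^\sharp<\norma{u}_\infty\}$, which is exactly the issue behind Lemma \ref{asscontmu} in the paper and can be nontrivial even when $u$ is smooth (plateaus of $u$ produce annular plateaus of $u^\sharp$, and $\mu$ then fails to be absolutely continuous). If $-\mu'(t)>\int_{\{u^\sharp=t\}}\abs{\nabla u^\sharp}^{-1}$, then $P(U_t^\sharp)^p(-\mu'(t))^{1-p}$ is \emph{strictly smaller} than $\int_{\{u^\sharp=t\}}\abs{\nabla u^\sharp}^{p-1}$, i.e.\ the inequality at this point goes the wrong way, and your chain no longer yields \eqref{psi}. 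The a.e.\ identity $-\mu'(t)=\int_{\{u^\sharp=t\}}\abs{\nabla u^\sharp}^{-1}$ can in fact be salvaged, but only through the analysis you waved away: one must show that the radial profile $u^*$ is locally absolutely continuous and that the image of $\{(u^*)'=0\}$ is Lebesgue-null (a one-dimensional Sard-type property of monotone functions), so that the critical set of $u^\sharp$ contributes only a singular part to $d\mu$, invisible to the pointwise derivative $\mu'$. This, not radial symmetry, is the real content of the step.

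There is a second, related gap: to apply the coarea formula and \eqref{brothers1} to $u^\sharp$ at all you must already know that $u^\sharp$ is weakly differentiable, which is part of the statement you are proving. Your proposal to recover $u^\sharp\in W^{1,p}_0(\Omega^\sharp)$ ``a posteriori from the finiteness of the right-hand side of \eqref{psi}'' is circular: the finiteness of $\norma{\nabla u}_p$ gives no information on $u^\sharp$ unless the comparison has already been established, and your derivation of the comparison used $\nabla u^\sharp$. The classical argument (Talenti's Lemma 1, or Kesavan's book) avoids both problems by never slicing $u^\sharp$: it proves directly from the estimate on $\mu$ that $u^*$ is locally absolutely continuous with $\left(n\omega_n^{1/n}s^{1-1/n}\right)(-{u^*}'(s))$ controlled in $L^p$, identifies $\norma{\nabla u^\sharp}_p^p$ with the corresponding one-dimensional integral, and compares it with $\int_0^\infty\left(n\omega_n^{1/n}\mu(t)^{1-1/n}\right)^p(-\mu'(t))^{1-p}\,dt$ through the relation between the a.e.\ derivatives of $\mu$ and of its generalized inverse. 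Also note, marginally, that your reduction to $u\in C^\infty_c(\Omega)$ does not remove the difficulty, since the delicate behaviour lives in $u^\sharp$, whose profile may still have plateaus and unbounded slope when $u$ is smooth, and that for $p=1$ the weak-compactness step should be run in $BV$.
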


The following remark will be useful in the sequel.

\begin{oss}\label{prop:decreas}
  We explicitly observe  that the function 
 \[
\varphi:s\rightarrow \dfrac{1}{s}\int_0^sf^* 
 \]
 is decreasing.
 Indeed, let $s_1\leq s_2$, and let us  show that $\varphi(s_1)\geq\varphi(s_2).$
    This is true if and only if 
\[
\dfrac{1}{s_1}\int_0^{s_1}f^*-\dfrac{1}{s_2}\int_0^{s_1}f^*\geq \dfrac{1}{s_2}\biggl( \int_0^{s_2}f^*-\int_0^{s_1}f^*\biggr)
\]
\[
\dfrac{1}{s_1}\int_0^{s_1}f^*\geq   \frac{1}{s_2-s_1} \int_{s_1}^{s_2}f^*.
\]
Let us recall that $f^*$ is a decreasing function, so
\[
f^*(s)\geq f^*(s_1)\geq f^*(r)
\]
if $s<s_1<r$. Hence, 
\[
\dashint_0^{s_1}f^*(s)\;ds\geq f^*(s_1)\geq \dashint_{s_1}^{s_2}f^*(r)\;dr.
\]
\end{oss}


\vspace{2mm}
We recall now some properties of the symmetric solution.
The explicit expression of $v$, solution to problem \eqref{symmetricproblem},  is
\begin{equation}\label{v_explicit}
    v(x)=\int_{\omega_n \abs{x}^n}^{\abs{\Omega}} \dfrac{1}{n^2\omega_n^{\frac{2}{n}} t^{2-\frac{2}{n}}}\int_0^t f^*(r)\;dr\, dt,
\end{equation}
and its gradient is
\begin{equation}\label{grad_explicit}
    \abs{\nabla v}(x)=\dfrac{1}{n\omega_n\abs{x}^{n-1}}\int_{0}^{\omega_n\abs{x}^n}f^*(r)\;dr.
\end{equation}

From \eqref{grad_explicit} we can deduce that the gradient of $v$ vanishes at most at the origin and  Lemma \ref{asscontmu} ensures that the distribution function $\nu$ is absolutely continuous and it satisfies

\begin{equation}
    \label{nu}
    n^2\omega_n^\frac{2}{n} \nu(t)^{2-\frac{2}{n}} = (-\nu'(t))\left(\int_0^{\nu(t)}f^\ast \right).
\end{equation}
While for the distribution function $\mu$
\begin{equation}\label{mu}
 n^2\omega_n^{\frac{2}{n}}\leq \dfrac{-\mu'(t)}{\mu(t)^{2-\frac{2}{n}}}\int_0^{\mu(t)}f^*(r)\;dr,
    \end{equation}

As we want to obtain a quantitative version of \eqref{tal}, it is useful to recall briefly the outline of its proof.  
\begin{teorema}[Talenti, \cite{talenti76}]\label{talenti:originale}
 Let $f\in L^{\frac{2n}{n+2}}(\Omega)$ if $n>2$, $f\in L^p(\Omega)$, $p>1$ if $n=2$ and let $u$ and $v$ be the solutions to problems \eqref{mainproblem} and \eqref{symmetricproblem} respectively. Then, 
$$u^\sharp(x)\leq v(x).$$

\end{teorema}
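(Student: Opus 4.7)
The plan is to recover the classical chain of inequalities that reduces the proof to a pointwise comparison of the one-dimensional decreasing rearrangements $u^*$ and $v^*$, and then to invoke the explicit formula \eqref{v_explicit} for $v$. All the intermediate tools appear earlier in Section \ref{section2}: the coarea formula \eqref{coarea}, the bound \eqref{brothers1} on $-\mu'$, the differential inequality \eqref{mu}, its equality counterpart \eqref{nu} for the symmetrized problem, and the Hardy--Littlewood inequality \eqref{hardy_littlewood}.

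First, I would test the weak formulation of \eqref{mainproblem} against the truncation $(u-t)_+\wedge h$ (or, equivalently, integrate $-\Delta u=f$ over the superlevel set $U_t$ in a distributional sense). This yields, for almost every $t\in(0,\lVert u\rVert_\infty)$, the identity
\[
\int_{\{u=t\}}\abs{\nabla u}\,d\mathcal{H}^{n-1}=\int_{U_t}f\,dx.
\]
Combining Cauchy--Schwarz on the left-hand side with the bound \eqref{brothers1} and the isoperimetric inequality $P(U_t)\ge n\omega_n^{1/n}\mu(t)^{(n-1)/n}$ produces
\[
n^2\omega_n^{2/n}\mu(t)^{2(n-1)/n}\le\bigl(-\mu'(t)\bigr)\int_{U_t}f\,dx,
\]
and then the Hardy--Littlewood inequality \eqref{hardy_littlewood}, applied to $f$ and $\chi_{U_t}$, replaces the right-most integral by $\int_0^{\mu(t)}f^*(r)\,dr$. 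This is exactly the differential inequality \eqref{mu} already stated in the excerpt.

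For the symmetric problem, $v$ is radial and strictly decreasing in $\abs{x}$ away from the origin, so all the inequalities above are equalities, which is relation \eqref{nu}: one has $-\nu'(t)\,\int_0^{\nu(t)}f^*=n^2\omega_n^{2/n}\nu(t)^{2(n-1)/n}$. Now I would change variables to the rearrangement side: writing $\mu\circ u^*(s)=s$ a.e., the chain rule gives $-(u^*)'(s)=1/(-\mu'(u^*(s)))$, so \eqref{mu} transforms into
\[
-\bigl(u^*\bigr)'(s)\le\frac{1}{n^2\omega_n^{2/n}s^{2-2/n}}\int_0^{s}f^*(r)\,dr,
\]
and the analogous relation with equality holds for $v^*$. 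Since $u^*(\abs{\Omega})=v^*(\abs{\Omega})=0$, integrating from $s$ to $\abs{\Omega}$ yields $u^*(s)\le v^*(s)$ for every $s\in[0,\abs{\Omega}]$, which is the same as $u^\sharp(x)\le v^\sharp(x)=v(x)$ after evaluating at $s=\omega_n\abs{x}^n$ and comparing with \eqref{v_explicit}.

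The only delicate point is the passage from the differential inequality \eqref{mu} to its integrated form on the rearrangement side, because $\mu$ need not be absolutely continuous when $u$ has flat regions; one handles this by working on the set of $t$ for which $\mu'(t)$ exists and $\int_{\{u=t\}}\abs{\nabla u}^{-1}\,d\mathcal{H}^{n-1}$ is finite, and then either noting that the exceptional set has zero $u^*$-measure or invoking Lemma \ref{asscontmu} after approximating $u$ by smoother functions. Apart from this measure-theoretic care, the argument is a straightforward assembly of the coarea formula, the isoperimetric inequality, and the Hardy--Littlewood inequality, all of which have already been recorded in the paper.
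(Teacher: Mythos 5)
Up to the differential inequality \eqref{mu} (test function, coarea, isoperimetric inequality, Cauchy--Schwarz, Hardy--Littlewood) your argument coincides with the paper's. The gap is in your final step, where you pass to the rearrangement side. First, the identity $\mu\circ u^*(s)=s$ a.e.\ is simply false whenever $u$ has a level set of positive measure (a plateau), which the hypotheses do not exclude: if $\abs{\{u=t_0\}}>0$ then $u^*\equiv t_0$ on a whole interval of $s$ on which $\mu(u^*(s))=\mu(t_0)<s$, so the chain-rule formula $-(u^*)'(s)=1/(-\mu'(u^*(s)))$ has no meaning there. Second, and more seriously, even if you had the pointwise bound on $-(u^*)'$ for a.e.\ $s$, integrating it from $s$ to $\abs{\Omega}$ recovers $u^*(s)$ only if $u^*$ is (locally) absolutely continuous: for a general monotone function the fundamental theorem of calculus gives $u^*(s)-u^*(\abs{\Omega})\geq\int_s^{\abs{\Omega}}(-(u^*)')$, i.e.\ an inequality in the wrong direction for the upper bound you need. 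Your proposed remedies do not close this: the problem is not an exceptional $t$-set of zero measure, and Lemma \ref{asscontmu} concerns the absolute continuity of $\mu$, not of $u^*$, while ``approximating $u$ by smoother functions'' does not obviously preserve either the PDE or the inequality. (A repair is possible --- e.g.\ note that Pólya--Szeg\H o gives $u^\sharp\in W^{1,2}_0(\Omega^\sharp)$, hence $u^*$ is locally absolutely continuous on $(0,\abs{\Omega})$, and treat the plateau intervals, where $(u^*)'=0$, separately --- but none of this is in your write-up.)

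The paper avoids the issue entirely by integrating the differential inequality in the variable $t$: from $1\le\frac{1}{n^2\omega_n^{2/n}}\mu(t)^{-2+\frac{2}{n}}\bigl(\int_0^{\mu(t)}f^*\bigr)(-\mu'(t))$ one gets $t\le v^*(\mu(t))$, because under the monotone change of variables the possible singular part of $\mu$ only enlarges the right-hand side, so the inequality goes the safe way. The conclusion $u^*(r)\le v^*(r)$ then follows using nothing more than the definition of $u^*$ as the generalized inverse of $\mu$ and the monotonicity of $v^*$, with no regularity of $u^*$ or of $\mu$ required. You should replace your chain-rule step by this integration-in-$t$ argument, for which you already have all the ingredients.
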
 

\begin{proof}
    Let us observe that $u$ is a weak solution to \eqref{mainproblem} if and only if 
\begin{equation}
    \label{wikdir}
    \int_{\Omega} \nabla u \nabla \varphi \, dx = \int_{\Omega} f \varphi \, dx, \qquad \forall \varphi \in W_0^{1,2}(\Omega).
\end{equation}
Let us fix $t\in [0,\norma{u}_\infty[$ and $h>0$, and let us define the function $\varphi$: 
\begin{equation}\label{phi}
	\left.
	\varphi (x)= 
	\right.
	\begin{cases}
	0 & \text{ if }u < t \\
	u-t & \text{ if }t< u< t+h \\
	h & \text{ if }u > t+h. 
	\end{cases} 
	\end{equation}
	 Using \eqref{phi}  as a test in the weak formulation \eqref{wikdir}, we have
 \begin{equation*}
    \int_{U_t\setminus U_{t+h}} \abs{\nabla u}^{2} \, dx=\int_{U_t\setminus U_{t+h}} f(x)(u-t) \, dx+ \int_{U_{t+h}} h f(x)\, dx.
    \end{equation*}
 If we divide now  by $h$, and we send $h$ to 0, we get 
\begin{equation*}\label{fmagg}
    \int_{\partial U_t} \abs{\nabla u} \, d\mathcal{H}^{n-1}(x)=\int_{U_t} f(x) \, dx\le \int_0^{\mu(t)} f^\ast(s)\, ds.
\end{equation*}

Applying  the isoperimetric inequality to the superlevel set $U_t$, the H\"older inequality and the Hardy-Littlewood inequality, we get, for almost every $t$,

\begin{align*}
	    n^2 \omega_n^{\frac{2}{n}}  \mu(t)^{2-\frac{2}{n}}& \leq P^2(U_t) \leq \left(\int_{\partial U_t} \,  d\mathcal{H}^{n-1}(x)\right)^2\\
	    &\leq\left(\int_{\partial U_t} \abs{\nabla u}\, d\mathcal{H}^{n-1}(x)\right) \left(\int_{\partial U_t}\frac{1}{\abs{\nabla u}} \, d\mathcal{H}^{n-1}(x)\right) \\
        &\leq \left(\int_0^{\mu(t)} f^\ast (s) \, ds\right) \left( -\mu'(t) \right).
	\end{align*}
Equivalently, we can write
\begin{equation*}
    1\le \frac{1}{n^2\omega_n^{\frac{2}{n}}}\mu(t)^{-2+\frac{2}{n}} \left(\int_0^{\mu(t)} f^\ast (s) \, ds\right) \left( -\mu'(t) \right),
\end{equation*}
and, integrating from $0$ to $t$, we obtain
\begin{equation}\label{tminv}
    t\le \frac{1}{n^2\omega_n^{\frac{2}{n}}}\int_{\mu(t)}^{\abs{\Omega}}r^{-2+\frac{2}{n}} \int_0^{r} f^\ast (s) \, ds \, dr= v^\ast(\mu(t)).
\end{equation}
 Since $u^\ast(r) =\inf\{\tau \,|\, \mu(\tau )<r\}$, \eqref{tminv} implies 
\begin{align*}
    u^\ast(r) =\inf\{\tau \,|\, \mu(\tau )<r\} \leq \inf  \left\{ v^\ast (\mu(\tau))\,|\, \mu(\tau )<r\right\} 
    \leq v^\ast(r),
\end{align*}
that gives \eqref{tal}.
\end{proof}

\begin{oss}\label{tal_grad}
 With the same techniques used for proving \eqref{tal}, it is possible to prove that 
 \begin{equation}
     \label{graduv}
     \int_\Omega \abs{\nabla u}^q\le   \int_{\Omega^\sharp} \abs{\nabla v}^q, \quad 0<q\le 2, 
 \end{equation}
and it is possible to estimate the $L^q$-norms of the gradient
of $v$ in terms of $L^p-$norms of $f$ via Bliss inequality (we refer to \cite[Equation (23.a)]{talenti76}), obtaining

\begin{equation}
    \label{stima:lq:gradv}
    \int_{\Omega^\sharp} \abs{\nabla v}^q=\frac{1}{n^q\omega_n^{\frac{q}{n}}} \int_0^{\abs{\Omega}}\left(\frac{1}{s}\int_0^2f^\ast\right)^q s^\frac{q}{n}\, ds \le K_1(n,q) \left(\int_0^{\abs{\Omega}}f^\frac{qn}{q+n}\right)^\frac{q+n}{n}.
\end{equation}
\end{oss}
\subsection{Some quantitative inequalities}

We recall now some quantitative results that we need to prove our main results Theorem \ref{asimmetryindex}, Theorem \ref{confrontol1} and Theorem \ref{confrontof}.
Let us start with the quantitative isoperimetric inequality, proved in \cite{fusco_maggi} (see also \cite{hall,halleco,cicaleseleonardi,Fuglede}).
\begin{teorema}
    \label{quant_isop_prop}
    There exists a constant $\gamma_n$ such that,  for any measurable set $\Omega$ of finite measure
    \begin{equation}\label{quant_isop}
        P(\Omega)\geq n\omega_n^{\frac{1}{n}}\abs{\Omega}^{\frac{n-1}{n}}\left(1+\dfrac{\alpha^2(\Omega)}{\gamma_n}\right),
    \end{equation}
    where $\alpha(\Omega)$ is defined in \eqref{asimm}.
\end{teorema}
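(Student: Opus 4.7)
The plan is to prove the quantitative isoperimetric inequality by combining the classical result of Fuglede for nearly spherical sets with the selection principle of Cicalese--Leonardi to reduce the general case to that setting. I will argue by contradiction with an improved minimizing sequence, gaining regularity through a penalized variational problem.

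First, I would establish Fuglede's lemma: if $E$ has $|E|=|B|$, the same barycenter as the unit ball $B$, and $\partial E=\{(1+u(\sigma))\sigma : \sigma\in S^{n-1}\}$ with $\|u\|_{C^1(S^{n-1})}$ small, then there is a dimensional constant $c_n>0$ with
\begin{equation*}
P(E)-P(B)\;\ge\; c_n\,\|u\|_{H^{1/2}(S^{n-1})}^{2}.
\end{equation*}
The proof expands $u$ in spherical harmonics $u=\sum_k a_k Y_k$; the measure and barycenter constraints kill the $k=0,1$ modes up to quadratic error, while a Taylor expansion of the area integrand around the sphere yields a quadratic form diagonal in the basis, whose eigenvalues $k(k+n-1)-(n-1)$ are positive for $k\ge 2$. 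This directly implies the quantitative isoperimetric inequality for nearly spherical sets, since the Fraenkel asymmetry of such an $E$ is controlled by $\|u\|_{L^1(S^{n-1})}\lesssim \|u\|_{H^{1/2}(S^{n-1})}$.

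Second, I would reduce to the nearly spherical case by a contradiction argument. Suppose there exist sets $\Omega_k$ with $|\Omega_k|=|B|$ and
\begin{equation*}
\frac{P(\Omega_k)-P(B)}{\alpha^2(\Omega_k)}\;\longrightarrow\; 0 .
\end{equation*}
Following Cicalese--Leonardi, I replace $\Omega_k$ with a minimizer $F_k$ of the penalized problem
\begin{equation*}
\min\Bigl\{ P(F)+\Lambda\bigl||F|-|B|\bigr|+\bigl|\alpha(F)-\varepsilon_k\bigr| \;:\; F\subset \mathbb{R}^n \Bigr\},
\end{equation*}
for a suitable $\varepsilon_k\to 0$ and $\Lambda$ large. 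The key point is that $F_k$ still violates the asserted inequality (by the choice of penalization and a truncation argument ensuring $\alpha(F_k)\sim \alpha(\Omega_k)$), but $F_k$ is now a quasi-minimizer of the perimeter. By De~Giorgi's $\varepsilon$-regularity theorem for almost-minimal surfaces (equivalently Tamanini's regularity), $\partial F_k$ is $C^{1,1/2}$ with uniform estimates, and since $\alpha(F_k)\to 0$, up to translations $F_k\to B$ in the Hausdorff and hence $C^{1,\alpha}$ sense.

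Third, since $F_k$ is eventually nearly spherical (after translating so the barycenter is fixed), Fuglede's lemma applies and gives $P(F_k)-P(B)\gtrsim \alpha^2(F_k)$, contradicting the violating property of $F_k$. This yields the existence of the dimensional constant $\gamma_n$. The main obstacle is the second step: one must carefully design the penalized functional so that (i) minimizers exist, (ii) they are quasi-minimizers of perimeter with uniform density estimates allowing the regularity and compactness conclusion, and (iii) they inherit the failure of the quantitative isoperimetric inequality from $\Omega_k$. Everything else is either classical Fourier analysis on the sphere or standard geometric measure theory.
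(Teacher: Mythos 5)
This theorem is not proved in the paper at all: it is quoted as a known result, with the proof attributed to Fusco--Maggi--Pratelli \cite{fusco_maggi} (and the explicit value of $\gamma_n$ to \cite{figalli_maggi}). Your sketch instead reproduces the later proof of Cicalese--Leonardi \cite{cicaleseleonardi} via the selection principle combined with Fuglede's estimate for nearly spherical sets \cite{Fuglede}, both of which the paper lists among the alternative references, so your route is a genuinely different but legitimate one. The comparison is roughly this: the symmetrization proof of \cite{fusco_maggi} is self-contained measure theory (reduction to $n$-symmetric sets via Steiner symmetrization, then a direct estimate), avoids any regularity theory, and is the line that leads to a computable constant $\gamma_n$; your selection-principle argument trades that for De Giorgi/Tamanini $\varepsilon$-regularity of $\Lambda$-minimizers of perimeter and a compactness/contradiction step, so the constant it produces is non-explicit, but the method is shorter once the regularity theory is granted, gives the sharp exponent $2$ just as well, and is far more flexible (it extends to other functionals, which is why it has become standard). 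Your outline is correct as a program; the points you would have to nail down are exactly the ones you flag, plus two you should make explicit: existence of minimizers of the penalized functional requires confining the competitors to a fixed large ball (otherwise minimizing sequences can escape or spread mass), and before invoking Fuglede you must translate $F_k$ so that both the barycenter condition and the optimal ball in the definition of $\alpha$ are compatible, since Fuglede's quadratic lower bound controls $\|u\|_{H^{1/2}}$ only after the zeroth and first harmonics have been removed by the volume and barycenter constraints. With those details supplied, the argument is the one in \cite{cicaleseleonardi}, and it does prove the statement as used in this paper, since here only the existence of some $\gamma_n=\gamma_n(n)$ is needed.
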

We observe that the constant $\gamma_n$ is explicitly computed in \cite{figalli_maggi}.

In the proof of Theorem \ref{asimmetryindex}, we apply the quantitative isoperimetric inequality \eqref{quant_isop} to the superlevel set $U_t$ of the solution to \eqref{mainproblem}. In order to do that, it is useful to estimate the asymmetry of $U_t$ in terms of the asymmetry of $\Omega$, as in the following lemma (we refer to \cite[Lemma 2.8]{brasco}). 
\begin{lemma}\label{lembrasco}
    Let $\Omega\subset\mathbb{R}^n$ be an open set with finite measure and let $U\subset\Omega$, $\abs{U}>0$ be such that 
    \begin{equation}\label{cond}
        \dfrac{\abs{\Omega\setminus U}}{\abs{\Omega}}\leq \dfrac{1}{4}\alpha(\Omega).
        \end{equation}
                Then, we have
                \begin{equation}\label{claim_lem}
                    \alpha(U)\geq \dfrac{1}{2}\alpha(\Omega).
                \end{equation}
\end{lemma}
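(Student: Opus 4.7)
The plan is to exploit the fact that the symmetric difference behaves like a metric: $|A\Delta C|\le |A\Delta B|+|B\Delta C|$, so if $U$ is measure-close to $\Omega$ and also close (in symmetric difference) to a ball, then $\Omega$ itself must be close to a nearby ball of the right size. This will let me upper-bound $\alpha(\Omega)$ by $\alpha(U)$ plus an error controlled by $|\Omega\setminus U|/|\Omega|$, and rearranging under the hypothesis \eqref{cond} will give \eqref{claim_lem}.

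Concretely, I would first fix an optimal ball $B_\rho(x_0)$ for $U$, namely one with $|B_\rho(x_0)|=|U|$ and $|U\Delta B_\rho(x_0)|=\alpha(U)|U|$. Since $|U|\le|\Omega|$, there is a radius $r\ge \rho$ with $|B_r(x_0)|=|\Omega|$, and $B_\rho(x_0)\subset B_r(x_0)$, so $|B_\rho(x_0)\Delta B_r(x_0)|=|\Omega|-|U|=|\Omega\setminus U|$. Applying the triangle inequality twice, together with $|\Omega\Delta U|=|\Omega\setminus U|$, yields
\begin{equation*}
|\Omega\Delta B_r(x_0)|\le |\Omega\Delta U|+|U\Delta B_\rho(x_0)|+|B_\rho(x_0)\Delta B_r(x_0)|\le 2|\Omega\setminus U|+\alpha(U)|U|.
\end{equation*}
Dividing by $|\Omega|=|B_r(x_0)|$ and using $|U|\le|\Omega|$, this gives the key bound
\begin{equation*}
\alpha(\Omega)\le \frac{|\Omega\Delta B_r(x_0)|}{|\Omega|}\le 2\,\frac{|\Omega\setminus U|}{|\Omega|}+\alpha(U).
\end{equation*}

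Finally I would plug in hypothesis \eqref{cond}, namely $|\Omega\setminus U|/|\Omega|\le \alpha(\Omega)/4$, obtaining $\alpha(\Omega)\le \alpha(\Omega)/2+\alpha(U)$, which rearranges to \eqref{claim_lem}. There is no real obstacle here: the only delicate point to state carefully is that the ball $B_r(x_0)$ used to test the asymmetry of $\Omega$ is the concentric enlargement of the optimal ball for $U$, so that $|B_\rho(x_0)\Delta B_r(x_0)|$ reduces to $|\Omega|-|U|$ rather than something larger. Everything else is just the triangle inequality for symmetric differences and a single arithmetic rearrangement.
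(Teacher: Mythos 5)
Your proof is correct: the triangle inequality for the symmetric difference applied to the chain $\Omega \to U \to B_\rho(x_0) \to B_r(x_0)$, together with $|\Omega\Delta U|=|\Omega\setminus U|$, $|B_\rho(x_0)\Delta B_r(x_0)|=|\Omega|-|U|$ and $|U|\le|\Omega|$, gives $\alpha(\Omega)\le 2|\Omega\setminus U|/|\Omega|+\alpha(U)$, and \eqref{cond} then yields \eqref{claim_lem}. The paper itself does not prove this lemma but cites \cite[Lemma 2.8]{brasco}, whose argument is essentially the same concentric-enlargement and triangle-inequality computation you give, so your proposal matches the intended proof.
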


As already stressed in the introduction, a key role will be played by the quantitative version of P\'olya-Szeg\H o
inequality \eqref{poliaszego}, proved in \cite{polyaquantitativa}.
\begin{teorema}[Quantitative P\'olya-Szeg\H o
inequality]
\label{polya_quant}
    Let $u\in W^{1,2}(\R^n)$, $n\geq 2$. Then, there exist  positive constants $r$, $s$ and $C$, depending only on $n$,  such that, for every $u\in W^{1,2}(\R^n)$, it holds
\begin{equation}\label{quanty_poly}
   \inf_{x_0 \in \R^n} \dfrac{\displaystyle{\int_{\R^n} \abs{u(x)\pm u^\sharp(x+x_0)} \, dx }}{\abs{\{u>0\}}^{\frac{1}{n}+\frac{1}{2}}\norma{\nabla u^\sharp}_2}\leq C(n) \left[M_{u^\sharp}(E(u)^r)+E(u)\right]^s,
\end{equation}
where 
\begin{equation}\label{eumu}
E(u)= \frac{\displaystyle{\int_{\R^n} \abs{\nabla u}^2}}{\displaystyle{\int_{\R^n} |\nabla u^{\sharp}|^2}}-1 \qquad \text{ and } \qquad M_{u^\sharp}(\delta)=\dfrac{\abs{\left\{|\nabla u^{\sharp}|<\delta\right\}\cap \left\{0<u^\sharp<||u||_\infty\right\}}}{\abs{\{\abs{u}>0\}}}.
\end{equation}

\end{teorema}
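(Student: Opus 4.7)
The final statement is the quantitative P\'olya--Szeg\H o inequality of Theorem \ref{polya_quant}, a result cited from \cite{polyaquantitativa}. Here is how I would approach its proof.

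The plan is to track the Pólya--Szegő deficit level by level and convert an integrated isoperimetric-deficit estimate on the superlevel sets $U_t = \{u > t\}$ into an $L^1$-closeness of $u$ to a translate of $u^\sharp$. First, I would reprove the Pólya--Szegő inequality via coarea and Cauchy--Schwarz: for a.e.\ $t$,
\[
\int_{\{u=t\}}|\nabla u|\,d\mathcal{H}^{n-1}\ \ge\ \frac{P(U_t)^2}{-\mu'(t)},
\]
while equality holds for $u^\sharp$. Integrating and using $\int|\nabla u^\sharp|^2 = \int_0^{\|u\|_\infty} P(U_t^\sharp)^2/(-\mu'(t))\,dt$ gives the level-by-level deficit bound
\[
\int_0^{\|u\|_\infty}\frac{P(U_t)^2-P(U_t^\sharp)^2}{-\mu'(t)}\,dt\ \le\ E(u)\int_{\R^n}|\nabla u^\sharp|^2.
\]

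Next, I would apply the quantitative isoperimetric inequality \eqref{quant_isop} to each $U_t$, obtaining $P(U_t)^2-P(U_t^\sharp)^2 \ge c_n\,\alpha(U_t)^2\,P(U_t^\sharp)^2/\gamma_n$. Together with the identity $P(U_t^\sharp)^2/(-\mu'(t)) = n\omega_n\,|\nabla u^\sharp|\bigl(r(t)\bigr)r(t)^{n-1}$, where $r(t)$ is the radius of $U_t^\sharp$, this yields an integrated estimate of the form
\[
\int_0^{\|u\|_\infty}\alpha(U_t)^2\,|\nabla u^\sharp|(r(t))\,r(t)^{n-1}\,dt\ \le\ C(n)\,\gamma_n\,E(u)\int|\nabla u^\sharp|^2.
\]
A Chebyshev-type argument then shows that for every $\eta,\delta > 0$, the set $T(\eta,\delta) := \{t : \alpha(U_t) > \eta,\ |\nabla u^\sharp|(r(t)) > \delta\}$ has $\mathcal{H}^1$-measure controlled by $C(n)\,E(u)/(\eta^2\delta\,\text{(geometric factors)})$.

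To convert this into an $L^1$ bound on $u-u^\sharp(\cdot+x_0)$, I would use the layer-cake identity
\[
\|u-u^\sharp(\cdot+x_0)\|_{L^1(\R^n)}\ =\ \int_0^{\|u\|_\infty}|U_t\,\triangle\,(U_t^\sharp+x_0)|\,dt,
\]
and split the $t$-integral into three regimes: (i) the \emph{flat} regime $\{t : |\nabla u^\sharp|(r(t))\le \delta\}$, where the $t$-measure equals $\|u\|_\infty \cdot M_{u^\sharp}(\delta)\cdot|\{u>0\}|/\|u\|_\infty$ up to constants and one uses the crude bound $|U_t\triangle(U_t^\sharp+x_0)|\le 2|\{u>0\}|$; (ii) the \emph{steep--bad} regime $T(\eta,\delta)$, controlled by Step 2 together with the same crude bound; (iii) the \emph{steep--good} regime, where $\alpha(U_t)\le \eta$ and $|\nabla u^\sharp|(r(t))> \delta$, so the quantitative isoperimetric inequality gives $|U_t\triangle B_t|\le \eta\,|U_t^\sharp|$ for a suitable ball $B_t$ near the optimal center. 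Choosing $\delta = E(u)^r$ and $\eta$ as a small power of $E(u)$, one optimizes the trade-off between the three contributions to obtain the claimed exponents $r,s$.

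The main obstacle is the good regime (iii): I need a \emph{single} translation $x_0$ that simultaneously nearly centers $U_t$ for almost every $t$ in that regime. The idea is to show that the optimal centers $x_0(t)$ vary continuously in $t$ with modulus controlled by $1/\delta$ and the quantitative-isoperimetric error, so that a single barycenter or a well-chosen level's center serves all other good levels up to an error absorbed into the $\eta$-power. This propagation of almost-symmetry along the level-set foliation is the delicate heart of the argument in \cite{polyaquantitativa}, and it is precisely what dictates the appearance of $M_{u^\sharp}(E(u)^r)$ rather than a bound purely in powers of $E(u)$.
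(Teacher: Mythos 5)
First, a point of reference: the paper does not prove Theorem \ref{polya_quant} at all; it is imported as a black box from \cite{polyaquantitativa}, so there is no internal proof to compare your sketch against. Your outline does follow the broad strategy of that reference (level-set decomposition via coarea and Cauchy--Schwarz, the quantitative isoperimetric inequality applied to each $U_t$, a Chebyshev selection of good levels, and the set $\{|\nabla u^\sharp|\le\delta\}$ feeding the $M_{u^\sharp}$ term), so as a roadmap it is sensible.

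As a proof, however, it has a genuine gap, and you name it yourself: the selection of a \emph{single} translation $x_0$ that nearly centers $U_t$ for almost every good level. Saying that the optimal centers ``vary continuously with modulus controlled by $1/\delta$'' is not an argument: the levels $t$ where $\alpha(U_t)\le\eta$ need not form an interval, the measures $\mu(t)$ decay across many scales so the center drift can accumulate, and controlling this drift (via nestedness of the $U_t$ and comparability of measures, summed over dyadic scales) is precisely the bulk of the work in \cite{polyaquantitativa}; it is also what produces the specific exponents $r,s$ rather than a simple optimization of three error terms. A second, more technical flaw: your level-by-level deficit bound rests on the identity $\int|\nabla u^\sharp|^2=\int_0^{\|u\|_\infty}P(U_t^\sharp)^2/(-\mu'(t))\,dt$, which holds only when $\mu$ is absolutely continuous (Lemma \ref{asscontmu}); if $|\nabla u^\sharp|$ vanishes on a set of positive measure inside the range, the identity fails and the displayed inequality $\int_0^{\|u\|_\infty}\bigl(P(U_t)^2-P(U_t^\sharp)^2\bigr)/(-\mu'(t))\,dt\le E(u)\int|\nabla u^\sharp|^2$ is not justified --- this degenerate situation is exactly the one the $M_{u^\sharp}$ term exists to handle, so the estimate must from the start be restricted to levels with $|\nabla u^\sharp|(r(t))>\delta$, and the weight $|\nabla u^\sharp|(r(t))\,r(t)^{n-1}$ in your Chebyshev step also degenerates near the top levels and must be cut off accordingly. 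These issues are repairable, but with them unresolved the sketch does not yet constitute a proof of \eqref{quanty_poly}; for the purposes of this paper, citing \cite{polyaquantitativa} (as the authors do) is the appropriate course.
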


We will also use a quantitative version of the Hardy-Littlewood inequality \eqref{hardy_littlewood}, proved in \cite{cianchi_ferone}. In order to state the result, we need to introduce the following notation:
\begin{align*}
    &\mu_h(t)=\abs{\{h>t\}}, \\
    &\mu_g(t)=\abs{\{g>t\}}, \\
    &g_h(x)=g^\ast(\mu_h(h(x))),
\end{align*}
and we observe that if the set $\Omega$ is a ball and the function $h$ is radial and decreasing, then the function $g_h(x)=g^\sharp(x)$.
Moreover, let us introduce the following Lorentz-type norm 
\begin{equation}\label{lorentz_cianchi}
\norma{g}_{\Lambda^q_p(\Omega)}=\left(\int_0^{\abs{\Omega}}g^\ast(s)^q \theta'_q(s)\;ds\right)^{1/q},
\end{equation}
where, for $s\in [0, \abs{\Omega})$,
\begin{equation}\label{theta}
    \theta_p(s)=\left(\int_0^s \left(-(h^\ast) '  (\sigma)\right)^{-1/(p-1)}  d\sigma \right)^{1/p'}.
\end{equation}
\begin{teorema}[Quantitative Hardy Littlewood inequality]\label{cianchi_hardy}
    Let  $h$ and $g$ be two measurable functions such that $hg\in L^1(\Omega)$ and let us assume that there exist  $p\in (1,+\infty)$ and $q\in [1,+\infty)$ such that $\theta_p(s)<\infty$ for every $0\leq s< |\Omega|$ and $\norma{g}_{\Lambda ^q_p(\Omega)}<\infty$. Then, by setting 
    \begin{equation}\label{cianchi_r}
        m=\dfrac{qp+1}{p+1},
    \end{equation}
    it holds
    \begin{equation}\label{cianchi_result}
        \int_{\Omega}h(x)g(x) \;dx+\dfrac{1}{2^{p+1}eq} \norma{g}^{-qp}_{\Lambda ^q_p(\Omega)}\norma{g-g_h}^{1+pq}_{L^m(\Omega)} \leq \int_0^{|\Omega|} h^\ast(s) g^\ast(s) \;ds.
    \end{equation}
\end{teorema}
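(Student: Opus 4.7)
The strategy is to combine the quantitative Hardy--Littlewood inequality of Theorem \ref{cianchi_hardy} with the already-obtained control on $\|u-u^\sharp(\cdot+x_0)\|_{L^1(\R^n)}$ from Theorem \ref{confrontol1}. The bridge is the auxiliary function $f_u(x):=f^\ast(\mu(u(x)))$, which is equimeasurable with $f$ but arranged along the level sets of $u$. A triangle inequality gives
\begin{equation*}
\inf_{x_0\in\R^n}\|f-f^\sharp(\cdot+x_0)\|_{L^m(\R^n)} \le \|f - f_u\|_{L^m(\Omega)} + \inf_{x_0\in\R^n}\|f_u - f^\sharp(\cdot+x_0)\|_{L^m(\R^n)},
\end{equation*}
and I would control the two terms separately.

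For the first term, I apply Theorem \ref{cianchi_hardy} on $\Omega$ with $h=u$, $g=f$ (so that $g_h=f_u$). Given $m\in[1,2)$, pick admissible exponents $p\in(1,\infty)$, $q\in[1,\infty)$ with $m=(qp+1)/(p+1)$; this yields
\begin{equation*}
\|f-f_u\|_{L^m(\Omega)}^{1+pq}\le 2^{p+1}eq\,\|f\|_{\Lambda^q_p(\Omega)}^{pq}\,D_1,\qquad D_1:=\int_0^{|\Omega|}u^\ast f^\ast\,ds-\int_\Omega uf\,dx.
\end{equation*}
The key deficit estimate is $D_1\le \|f\|_1\,\|v-u^\sharp\|_{L^\infty(\Omega^\sharp)}$. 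To prove it, write $D_1=\int_0^{\|u\|_\infty}\delta(t)\,dt$ with $\delta(t):=\int_0^{\mu(t)}f^\ast-\int_{U_t}f\ge 0$. Inspecting Talenti's chain (as in the proof of Theorem \ref{talenti:originale}) gives the \emph{stronger} intermediate inequality $(-\mu'(t))\int_{U_t}f\ge n^2\omega_n^{2/n}\mu(t)^{2-2/n}$, obtained \emph{before} invoking the Hardy--Littlewood step. Subtracting this from the weaker bound \eqref{mu} and using the explicit expression $\int_0^s f^\ast=n^2\omega_n^{2/n}s^{2-2/n}(-v^{\ast\prime}(s))$ coming from \eqref{nu} together with the identity $(-\mu'(t))(-u^{\ast\prime}(\mu(t)))=1$, one obtains the pointwise estimate
\begin{equation*}
\delta(t)\le n^2\omega_n^{2/n}\mu(t)^{2-2/n}\,\Delta(\mu(t)),\qquad \Delta(s):=-v^{\ast\prime}(s)+u^{\ast\prime}(s)\ge 0.
\end{equation*}
The change of variable $s=\mu(t)$ plus the bound $n^2\omega_n^{2/n}s^{2-2/n}(-u^{\ast\prime}(s))\le\int_0^s f^\ast\le\|f\|_1$ (from \eqref{mu}) then yields
\begin{equation*}
D_1\le \|f\|_1\int_0^{|\Omega|}\Delta(s)\,ds = \|f\|_1\,(v^\ast(0)-u^\ast(0))\le \|f\|_1\,\|v-u^\sharp\|_\infty.
\end{equation*}
The Lorentz norm $\|f\|_{\Lambda^q_p(\Omega)}$ is controlled as follows: the weight $\theta'_q$ in \eqref{lorentz_cianchi} depends on $-u^{\ast\prime}$, which is bounded above by $-v^{\ast\prime}=\int_0^s f^\ast/(n^2\omega_n^{2/n}s^{2-2/n})$ via Talenti, and then Hölder interpolation between $\|f\|_1$ and $\|f\|_2$ (available since $f\in L^2$) gives $\|f\|_{\Lambda^q_p(\Omega)}\le C(n,m)\,\|f\|_2^{a}\,\|f\|_1^{b}\,|\Omega|^{c}$ for explicit exponents $a,b,c$ depending only on $n$ and $m$.

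For the second term, note that both $f_u$ and $f^\sharp(\cdot+x_0)$ factor through the same increasing function $F:=f^\ast\circ\mu$, namely $f_u=F(u)$ and $f^\sharp(\cdot+x_0)=F(u^\sharp(\cdot+x_0))$, because $\mu$ is also the distribution function of $u^\sharp(\cdot+x_0)$. Consequently $\{f_u>c\}=\{u>F^{-1}(c)\}$ and $\{f^\sharp(\cdot+x_0)>c\}=\{u^\sharp(\cdot+x_0)>F^{-1}(c)\}$. A layer-cake computation combined with Hölder's inequality in the measure $dF$ converts the $L^1$-closeness of $u$ to $u^\sharp(\cdot+x_0)$ from Theorem \ref{confrontol1} into an $L^m$ bound on $f_u-f^\sharp(\cdot+x_0)$, with a multiplicative factor involving $\|f\|_2$ (which plays the role of a total-variation-type control on $F$). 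Plugging in the estimate from Theorem \ref{confrontol1} and collecting exponents produces the final bound with $\tilde\theta_2,\tilde\theta_3$ as claimed.

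\textbf{Main obstacle.} I expect the passage from the $L^1$ closeness of $u$ and $u^\sharp(\cdot+x_0)$ to the $L^m$ closeness of $f_u$ and $f^\sharp(\cdot+x_0)$ to be the most delicate point: the rearrangement function $F=f^\ast\circ\mu$ is monotone but in general neither Lipschitz nor essentially bounded, so one must carefully interpolate between the $L^1$ and $L^2$ integrability of $f$. This is precisely the mechanism producing the factor $\|f\|_2^{\tilde\theta_2}$ and the $(1-m)/m$ power of $|\Omega|$ in the final estimate.
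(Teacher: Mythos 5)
Your proposal does not prove the statement it was assigned. The statement is the quantitative Hardy--Littlewood inequality itself, i.e.\ the assertion that for admissible $h,g$ one has $\int_\Omega hg\,dx+\tfrac{1}{2^{p+1}eq}\norma{g}^{-qp}_{\Lambda^q_p(\Omega)}\norma{g-g_h}^{1+pq}_{L^m(\Omega)}\le\int_0^{\abs{\Omega}}h^\ast g^\ast\,ds$. Your very first displayed inequality (with $h=u$, $g=f$, $g_h=f_u$, and the deficit $D_1$) is exactly this inequality, rearranged and invoked as a known tool; everything that follows builds on it. So with respect to the assigned statement the argument is circular: nowhere do you establish why the Hardy--Littlewood deficit $\int_0^{\abs{\Omega}}h^\ast g^\ast-\int_\Omega hg$ controls the $(1+pq)$-th power of $\norma{g-g_h}_{L^m}$, normalized by the Lorentz-type norm $\norma{g}_{\Lambda^q_p}$ built from $\theta_p$ in \eqref{theta}. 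That is the entire content of the theorem, and proving it requires the rearrangement machinery of Cianchi--Ferone (a pointwise analysis of $g_h=g^\ast\circ\mu_h\circ h$ along the level sets of $h$, and the interpolation role played by the weight $\theta_p'$, which encodes $(h^\ast)'$); none of this appears in your write-up. For what it is worth, the paper does not prove this theorem either: it is quoted verbatim from the reference, so there is no internal proof to reproduce, but quoting is not what a blind proof attempt is supposed to do.

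What you have actually sketched is a stability estimate for $f$ versus $f^\sharp$, i.e.\ roughly Theorem \ref{confrontof} of the paper, taking Theorem \ref{cianchi_hardy} for granted. Even judged against that target, your route differs from the paper's: you apply the quantitative Hardy--Littlewood inequality with $h=u$ on $\Omega$ and compare $f$ with $f_u=f^\ast(\mu(u(\cdot)))$, then try to pass from $\norma{u-u^\sharp(\cdot+x_0)}_{L^1}$ to an $L^m$ bound on $f_u-f^\sharp(\cdot+x_0)$ through the monotone map $F=f^\ast\circ\mu$; the paper instead takes $h=v$ on $\Omega^\sharp$ with $g=f\chi_{\Omega\cap\Omega^\sharp}$, checks the admissibility conditions ($\theta_p<\infty$, $\norma{g}_{\Lambda^q_p}<\infty$ via a Lorentz embedding using the explicit radial structure of $v$), and bounds $\int_{\Omega^\sharp}(g^\sharp-g)v$ by energy comparison plus the $L^2$ version of Theorem \ref{confrontol1}, handling $f-g$ by the measure of $\Omega\Delta\Omega^\sharp$. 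Your step converting $L^1$ closeness of $u$ and $u^\sharp$ into $L^m$ closeness of $F(u)$ and $F(u^\sharp)$ is precisely where the argument would need a genuine proof ($F$ is neither Lipschitz nor bounded in general, as you acknowledge), and the paper's choice of $h=v$ is designed to avoid exactly this difficulty. In any case, none of this addresses the theorem you were asked to prove.
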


\subsection{Rescaling properties}
\label{riscalamento}

    For simplicity, we will often prove our results under two extra assumptions:
    \begin{align}
     \label{om}   \abs{\Omega}=1,  \qquad \qquad \norma{f}_1=1. 
    \end{align}
    If this is not the case, we recover the result in the more general setting in the following way. Let us set 
    $$a=\dfrac{\abs{\Omega}^{1-\frac{2}{n}}}{\norma{f}_1}, \quad \quad b= \abs{\Omega}^{-\frac{1}{n}}.$$
    If $u$ and $v$ are solutions to \eqref{mainproblem} and \eqref{symmetricproblem} respectively, we define the functions $w(x)= au\left(\frac{x}{b}\right)$ and $z(x)= av\left(\frac{x}{b}\right)$, that are solutions to

    $$\begin{cases}
        -\Delta w=g & \text{in } \Tilde{\Omega}\\
        w=0 & \text{on } \partial\Tilde{\Omega},
    \end{cases} \qquad \qquad \begin{cases}
        -\Delta z=g^\sharp & \text{in } \Tilde{\Omega}^\sharp\\
        z=0 & \text{on } \partial\Tilde{\Omega}^\sharp,
    \end{cases}$$
   with $|\Tilde{\Omega}|=1$ and $\norma{g}_1=1$.  
Furthermore, the following holds
 \begin{align*}
    \alpha(\tilde \Omega) &= \alpha(\Omega); \\
      ||z-w^\sharp||_\infty &=\dfrac{\abs{\Omega}^{1-\frac{2}{n}}}{\norma{f}_1} ||v-u^\sharp||_\infty;\\
      ||w-w^\sharp||_1 &=\dfrac{\abs{\Omega}^{-\frac{2}{n}}}{\norma{f}_1} ||u-u^\sharp||_1;\\
      {\norma{g}}_p&= \abs{\Omega}^{1-\frac{1}{p}} \frac{\norma{f}_p}{\norma{f}_1} \,\,\, \forall p \,\text{ s.t. }\, f \in L^{p}(\Omega).
 \end{align*}
Moreover, if $\sigma(t)$ and $\mu(t)$ are the distribution functions of $w$ and $u$ respectively, then
$$
\sigma(t)= \abs{\Omega}^{-1}\mu\left(\frac{t}{a}\right).
$$

\section{Proof of Theorem \ref{asimmetryindex}}
\label{section3}

The following Lemma is inspired by Lemma $2.9$ (\textit{Boosted P\'olya-Szeg\H o principle}) contained in \cite{brasco}.
\begin{lemma} 
\label{boosted_prop}
 Let $f\in L^{\frac{2n}{n+2}}(\Omega)$ if $n>2$, $f\in L^p(\Omega)$, $p>1$ if $n=2$ and let $u$ and $v$ be the solutions to \eqref{mainproblem} and \eqref{symmetricproblem} respectively.

Let $s_{\Omega}$ be defined as follows
    \begin{equation}
        \label{essami}
        s_{\Omega}=\sup\left\{ t\geq 0: \mu(t)\geq \abs{\Omega}\left(1-\dfrac{\alpha(\Omega)}{4}\right)\right\} \in \R,
    \end{equation}
then, it results
\begin{equation}
        \label{boosted}
        \dfrac{1}{2\gamma_n}s_{\Omega}\alpha^2(\Omega)\leq ||v-u^{\sharp}||_{\infty},
    \end{equation}
    where $\gamma_n$ is the constant appearing in the quantitative isoperimetric inequality \eqref{quant_isop}.
\end{lemma}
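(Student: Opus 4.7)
The plan is to retrace Talenti's proof, but replace the (sharp) isoperimetric inequality by its quantitative version \eqref{quant_isop} on the superlevel sets $U_t$ for $t \in [0, s_\Omega]$. The gain in the isoperimetric deficit will, after integration in $t$, produce an additive surplus in Talenti's comparison that we will interpret as a lower bound for $v^\ast(\mu(s_\Omega))-u^\ast(\mu(s_\Omega))$, and hence for $\|v-u^\sharp\|_\infty$.

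The first step is to control the asymmetry of the level sets. By the very definition \eqref{essami} of $s_\Omega$, for every $t \in [0, s_\Omega]$ we have $\mu(t) \geq |\Omega|(1-\alpha(\Omega)/4)$, so $|\Omega\setminus U_t|/|\Omega| \leq \alpha(\Omega)/4$. This is exactly the hypothesis \eqref{cond} of Lemma \ref{lembrasco}, whence $\alpha(U_t)\geq \alpha(\Omega)/2$. Plugging this into \eqref{quant_isop} applied to $U_t$, squaring and using $(1+x)^2 \geq 1+2x$, we obtain
\[
P(U_t)^2 \geq n^2\omega_n^{2/n}\mu(t)^{2-2/n}\left(1+\frac{\alpha(\Omega)^2}{2\gamma_n}\right) \qquad \text{for a.e.\ } t \in [0,s_\Omega].
\]
Running exactly the same Cauchy--Schwarz/Hardy--Littlewood chain as in the proof of Theorem \ref{talenti:originale} shows that the right-hand side is still bounded above by $(-\mu'(t))\int_0^{\mu(t)} f^\ast$, so
\[
1+\frac{\alpha(\Omega)^2}{2\gamma_n} \leq \frac{-\mu'(t)}{n^2\omega_n^{2/n}\mu(t)^{2-2/n}}\int_0^{\mu(t)} f^\ast(r)\,dr.
\]
Integrating this inequality from $0$ to $s_\Omega$ and performing the change of variable $r=\mu(t)$ (the standard argument that justified \eqref{tminv}) yields
\[
s_\Omega\left(1+\frac{\alpha(\Omega)^2}{2\gamma_n}\right) \leq \frac{1}{n^2\omega_n^{2/n}}\int_{\mu(s_\Omega)}^{|\Omega|}\frac{1}{r^{2-2/n}}\int_0^r f^\ast(\rho)\,d\rho\,dr = v^\ast(\mu(s_\Omega)).
\]

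The last step converts this into an $L^\infty$ bound on $v-u^\sharp$. Since $u^\ast(\mu(s_\Omega)) \leq s_\Omega$ by definition of $u^\ast$, rearranging gives
\[
\frac{s_\Omega\,\alpha^2(\Omega)}{2\gamma_n} \leq v^\ast(\mu(s_\Omega)) - s_\Omega \leq v^\ast(\mu(s_\Omega)) - u^\ast(\mu(s_\Omega)) \leq \|v^\ast-u^\ast\|_\infty = \|v-u^\sharp\|_\infty,
\]
which is \eqref{boosted}. The main subtlety I expect is the change-of-variable step when $\mu$ is not absolutely continuous: one only has the inequality $-\mu'(t) \geq \int_{\partial^\ast U_t} |\nabla u|^{-1} d\mathcal{H}^{n-1}$ (see \eqref{brothers1}), so one has to argue that this already suffices because the chain of inequalities used to derive the ODE is monotone in $-\mu'(t)$. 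Everything else amounts to bookkeeping along Talenti's classical proof.
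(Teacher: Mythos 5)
Your proposal is correct and follows essentially the same route as the paper: quantitative isoperimetric inequality on the superlevel sets $U_t$, Lemma \ref{lembrasco} applied for $t$ below $s_\Omega$ via the definition \eqref{essami}, and then Talenti's integration/change-of-variable argument to convert the isoperimetric surplus into a lower bound for $v^\ast-u^\ast$. The only cosmetic difference is that you insert the pointwise bound $\alpha(U_t)\ge\alpha(\Omega)/2$ before integrating and evaluate at $r=\mu(s_\Omega)$ using $u^\ast(\mu(s_\Omega))\le s_\Omega$, whereas the paper keeps $\alpha^2(U_t)$ inside the integral and evaluates at small $r$ with $u^\ast(r)>s_\Omega$; both yield \eqref{boosted} with the same constant.
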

Let us point out that $s_\Omega \in (0,+\infty)$:
\begin{itemize}
    \item It is greater then zero whenever $\alpha(\Omega)>0$ since $t \to \mu(t)$ is right-continuous and $\mu(0)= \abs{\Omega}$;
    \item It is finite since there exists $r$ small enough that $u^\ast(r) >s_\Omega$.  Indeed, by contradiction let us assume that  $\forall r \,\,u^\ast(r) <s_\Omega,$ then $\forall r $
$$
r \geq \mu(u^\ast(r)) > \abs{\Omega} \left(1 - \frac{\alpha(\Omega)}{4}\right).
$$
This is a contradiction for $r$ sufficiently small.
\item By the definition of decreasing rearrangement \ref{decreasing:rear} it follows that
$$s_\Omega=u^\ast \left(\abs{\Omega}\left(1-\frac{\alpha(\Omega)}{4}\right)\right)$$
\end{itemize}

\begin{proof} We  assume that $\alpha(\Omega)>0$, otherwise the result is trivial. 
    Let us apply the quantitative isoperimetric inequality \eqref{quant_isop} to the level set $U_t$ and, then, argue as in the proof of Theorem \ref{talenti:originale},

    \begin{equation}\label{peromega}
        \begin{aligned}
       n^2\omega_n^{\frac{2}{n}}\mu^{2-\frac{2}{n}}(t)\left(1+\dfrac{2}{\gamma_n}\alpha^2(U_t)\right)&\le n^2\omega_n^{\frac{2}{n}}\mu^{2-\frac{2}{n}}(t)\left(1+\dfrac{1}{\gamma_n}\alpha^2(U_t)\right)^2\leq P^2(U_t) \\
       &\leq \int_{u=t}\abs{\nabla u}\int_{u=t}\dfrac{1}{\abs{\nabla u}}\le
      \left( \int_0^{\mu(t)}f^*\right) (-\mu'(t)).
        \end{aligned}
    \end{equation}
    If we divide by $n^2\omega_n^{\frac{2}{n} }\mu^{2-\frac{2}{n}}(t)$ and we integrate between $0$ and $\tau$, we get

    \begin{equation*}
        \tau+\dfrac{2}{\gamma_n}\int_0^{\tau}\alpha^2(U_t)\;dt\leq \int_{\mu(\tau)}^{\abs{\Omega}}\left(\dfrac{1}{n^2\omega_n^{\frac{2}{n}} t^{2-\frac{2}{n}}}\int_0^tf^*\right)\, dt=v^\ast (\mu(\tau)).
    \end{equation*}    
    
    The last, due to the definition of decreasing rearrangement, can be written as
    \begin{equation}\label{fine}
        \dfrac{2}{\gamma_n}\int_0^{u^*(r)}\alpha^2(U_t)\;dt\leq\int_r^{\abs{\Omega}}\left(\dfrac{1}{n^2\omega_n^{\frac{2}{n}}t^{2-\frac{2}{n}}}\int_0^tf^*\right)\, dt-u^*(r)= v(r)-u^*(r).
    \end{equation}
We define the set
     \begin{equation}\label{A}
    A:= \left\{   
 t\geq 0: \mu(t)\geq \abs{\Omega}\left(1-\dfrac{\alpha(\Omega)}{4}\right)  \right\},\end{equation}
then $0 \in A$  whenever $\alpha(\Omega)>0$, and it is an interval since $\mu$ is a decreasing function.  Moreover, for all $t\in A$, it holds

 \begin{equation*}
     \frac{\abs{\Omega \setminus U_t}}{|\Omega|}=1-\frac{\mu(t)}{\abs{\Omega}}\le \frac{1}{4} \alpha(\Omega).
 \end{equation*}
 Therefore, we can apply Lemma \ref{lembrasco} and we get \begin{equation}
     \label{conti}
     \alpha(U_t)\geq \frac{\alpha(\Omega)}{2}\qquad \forall t \in A.
 \end{equation}
 On the other hand, from \eqref{fine} it follows
 \begin{equation}\label{questa}
     \min\left\{u^*(r),s_{\Omega}\right\}\dfrac{\alpha^2(\Omega)}{2\gamma_n}\leq \dfrac{2}{\gamma_n}\int_0^{u^*(r)}\alpha^2(U_t)\;dt\leq v(r)-u^*(r), \quad \forall r\in[0, \abs{\Omega}],
 \end{equation}
and, 
choosing $r$ sufficiently small in \eqref{questa}, we can conclude that
\begin{equation*}
  \dfrac{1}{2\gamma_n}  s_{\Omega}\alpha^2(\Omega)\leq  ||v-u^\sharp||_{\infty}.
\end{equation*}

\end{proof}

We are now in a position to prove Theorem \ref{asimmetryindex}.

\begin{proof}[Proof Theorem \ref{asimmetryindex}]
    Firstly, let us suppose $\abs{\Omega}=1$, and $\norma{f}_{1}=1$. Let us assume that $\alpha(\Omega)>0$, otherwise the inequality \eqref{goal0} is trivial. 
We set
     \begin{equation}\label{eps}
        \varepsilon:=||v-u^{\sharp}||_\infty,
    \end{equation}
   and we suppose that  $\varepsilon<1$.  If this is not the case, inequality \eqref{goal0} holds, being $\alpha(\Omega)<2$ and $ K_1\geq 8$.
    Definition \eqref{eps} implies
    \begin{equation*}
      v(x)-\varepsilon  \leq u^{\sharp}(x)\leq v(x), \qquad \text{for almost any } x\in \Omega^\sharp
    \end{equation*}
  and it follows that 
\begin{equation}\label{distr}
\nu(t+\varepsilon)\leq \mu(t) \leq \nu(t).
\end{equation}

Moreover, as $\nu$ is absolutely continuous, 
we have
\begin{equation*}
\begin{aligned}
    \nu(t+\varepsilon)-\nu(t)&=\int_t^{t+\varepsilon}\nu'=\int_t^{t+\varepsilon}\dfrac{-n^2\omega_n^{\frac{2}{n}}}{\nu^{-1+\frac{2}{n}}\dashint_0^{\nu(r)}f^*}\\
    &\geq\int_t^{t+\varepsilon}- \dfrac{n^2\omega_n^{\frac{2}{n}}}{\abs{\Omega}^{-1+\frac{2}{n}}}\dfrac{\abs{\Omega}}{\norma{f}_1}=- n^2\omega_n^{\frac{2}{n}}\varepsilon,
    \end{aligned}
\end{equation*}
where the inequality follows from Remark \ref{prop:decreas}.
So, \eqref{distr} becomes
\begin{equation}
    \label{muquasinu}
\nu(t)-n^2\omega_n^{\frac{2}{n}}\varepsilon\leq \mu(t) \leq \nu(t).
\end{equation}
By the definition of $s_\Omega$ and the property of decreasing rearrangement, we get
\begin{equation}
\label{muustar}
\mu(s_\Omega)=\mu\left(u^\ast\left(1-\frac{\alpha(\Omega)}{4}\right)\right)   \le 1-\frac{\alpha(\Omega)}{4}.
\end{equation}
So, combining \eqref{muquasinu}, \eqref{muustar} and  the absolutely continuity 
 of $\nu(t)$, we have
\begin{equation*}
    \begin{aligned}
    \frac{\alpha(\Omega)}{4}&\le 1-\mu(s_\Omega)\le 1- \nu(s_\Omega) + n^2 \omega_n^{\frac{2}{n}}\varepsilon
    =\int_0^{s_{\Omega}}-\nu'(t)dt +n^2 \omega_n^{\frac{2}{n}}\varepsilon\\&=\int_0^{s_{\Omega}} \dfrac{n^2\omega_n^{\frac{2}{n}}}{\nu^{-1+\frac{2}{n}}\dashint_0^{\nu(r)}f^*}\;dr +n^2 \omega_n^{\frac{2}{n}}\varepsilon\leq n^2\omega_n^{\frac{2}{n}}\abs{\Omega}^{2-\frac{2}{n}}\dfrac{s_{\Omega}}{\norma{f}_1} +n^2 \omega_n^{\frac{2}{n}}=n^2\omega_n^\frac{2}{n}(s_\Omega+\varepsilon),
    \end{aligned}
\end{equation*}
that gives 
\begin{equation}
    \label{veditu}
    s_\Omega\ge \frac{\alpha(\Omega)}{4n^2\omega_n^{\frac{2}{n}}}-\varepsilon.
\end{equation}
We now distinguish two cases
\begin{description}
     \item[Case 1] Let us assume that $\displaystyle{n^2\omega_n^\frac{2}{n}\varepsilon \le \frac{\alpha(\Omega)}{8}}$, then by \eqref{veditu}
 and Lemma \ref{boosted_prop}, we obtain
 \begin{equation} 
 \label{caso1_quant}
     \frac{1}{16 n^2 \omega_n^{\frac{2}{n}}\gamma_n} \alpha^3(\Omega)\le \varepsilon;
 \end{equation}

     \item[Case 2] Let us assume that $\displaystyle{n^2\omega_n^\frac{2}{n}\varepsilon > \frac{\alpha(\Omega)}{8}}$
     then

     \begin{equation}
     \label{caso2_quant}
         \alpha(\Omega) \le 8 n^2\omega_n^\frac{2}{n}\varepsilon \le 8n^2\omega_n^\frac{2}{n}\varepsilon^\frac{1}{3}
     \end{equation}
 \end{description}
 From  \eqref{caso1_quant} and \eqref{caso2_quant}, we get the claim \eqref{goal0} by choosing
 $$\widetilde C_1= \max\left\{\left(8 n^2\omega_n^{\frac{2}{n}}\right)^{3},(16n^2\omega_n^{\frac{2}{n}} \gamma_n)\right\},$$
in the case $\abs{\Omega}=1$ and $||f||_1=1$. In the general case, using the scaling properties contained in Section  \ref{riscalamento}, we obtain the desired claim \eqref{goal0}.

\end{proof}

An equivalent result holds for almost every superlevel set $U_t=\{u>t\}$.

\begin{teorema}
\label{sopralivelli}
 Let $u$ and $v$ be the solutions to \eqref{mainproblem} and \eqref{symmetricproblem} respectively, and let $u^\sharp$ be the Schwarz rearrangement of $u$.
 Then, it holds
    \begin{equation}   \label{goal1}
        \alpha^3(U_t)\leq \widetilde C_1 \dfrac{\abs{\Omega}^3}{\mu^3(t)}\frac{\abs{\Omega}^{1-\frac{2}{n}}}{\norma{f}_1} ||v-u^{\sharp}||_{\infty},
    \end{equation}
    where $\widetilde C_1$ is defined in \eqref{cappa1}.
\end{teorema}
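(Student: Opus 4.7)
The plan is to run the proof of Theorem~\ref{asimmetryindex} in a localized form, replacing the role of $\Omega$ by the superlevel set $U_t$. Under the normalization $\abs{\Omega}=1$, $\norma{f}_1=1$, I set $\varepsilon=\norma{v-u^{\sharp}}_\infty$, assume $\alpha(U_t)>0$ and $\varepsilon<1$ (otherwise the claim is trivial), and define the $t$-dependent threshold
\[
s_t:=\sup\left\{s\ge t : \mu(s)\ge \mu(t)\left(1-\dfrac{\alpha(U_t)}{4}\right)\right\}.
\]
For every $s\in[t,s_t]$ one has $\abs{U_t\setminus U_s}/\abs{U_t}\le \alpha(U_t)/4$, so Lemma~\ref{lembrasco} (applied with $\Omega\leadsto U_t$, $U\leadsto U_s$) yields $\alpha(U_s)\ge \alpha(U_t)/2$ on this interval. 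This is the analog of the step that gives \eqref{conti} in Lemma~\ref{boosted_prop}.

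The master inequality \eqref{fine} (which depends on $\Omega$ only through $\mu$ and $v^*$) gives, for every $r<\mu(s_t)$, $u^*(r)\ge s_t$ and therefore
\[
\int_0^{s_t}\alpha^2(U_s)\,ds \le \frac{\gamma_n}{2}\bigl(v^*(r)-u^*(r)\bigr)\le \frac{\gamma_n}{2}\,\varepsilon.
\]
Combining this with the lower bound $\alpha(U_s)\ge\alpha(U_t)/2$ on $[t,s_t]$ yields the first key estimate $(s_t-t)\,\alpha^2(U_t)\le 2\gamma_n\varepsilon$. For the second, I use \eqref{muquasinu}, namely $\nu(s_t)\le\mu(s_t)+n^2\omega_n^{2/n}\varepsilon$ and $\nu(t)\ge\mu(t)$, which give
\[
\nu(t)-\nu(s_t)\ge \mu(t)\,\dfrac{\alpha(U_t)}{4}-n^2\omega_n^{2/n}\varepsilon,
\]
while \eqref{nu}, together with $\nu(s)^{-1+2/n}\ge1$ (since $n\ge2$ and $\nu(s)\le 1$) and Remark~\ref{prop:decreas} giving $\dashint_0^{\nu(s)}f^*\ge\norma{f}_1=1$, implies $\nu(t)-\nu(s_t)\le n^2\omega_n^{2/n}(s_t-t)$. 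Thus $s_t-t\ge \dfrac{\mu(t)\alpha(U_t)}{4n^2\omega_n^{2/n}}-\varepsilon$.

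I then close the argument by the same dichotomy used in Theorem~\ref{asimmetryindex}. If $n^2\omega_n^{2/n}\varepsilon\le \mu(t)\alpha(U_t)/8$, the lower bound on $s_t-t$ absorbs the $-\varepsilon$ and combining with $(s_t-t)\alpha^2(U_t)\le 2\gamma_n\varepsilon$ gives $\alpha^3(U_t)\le 16n^2\omega_n^{2/n}\gamma_n\,\varepsilon/\mu(t)$; in the opposite regime $\alpha(U_t)\le 8n^2\omega_n^{2/n}\varepsilon/\mu(t)$, which after cubing and using $\varepsilon\le1$ gives $\alpha^3(U_t)\le (8n^2\omega_n^{2/n})^3\varepsilon/\mu^3(t)$. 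Since $\mu(t)\le 1$, both cases collapse to $\alpha^3(U_t)\le \widetilde C_1\,\varepsilon/\mu^3(t)$ with $\widetilde C_1$ as in \eqref{cappa1}. The general case then follows from the rescaling properties of Section~\ref{riscalamento}, which turn $\mu(t)$ into $\mu(t)/\abs{\Omega}$ and $\varepsilon$ into $\varepsilon\cdot \abs{\Omega}^{1-2/n}/\norma{f}_1$, producing exactly the right-hand side of \eqref{goal1}.

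The part I expect to require the most care is the bookkeeping that unifies the two regimes into a single $1/\mu^3(t)$ factor: the ``good'' case naturally produces $1/\mu(t)$ while the ``bad'' case produces $1/\mu^3(t)$, and the reduction to a common exponent relies on the normalization $\mu(t)\le\abs{\Omega}=1$, which in turn must be tracked through the rescaling in order not to spoil the powers of $\abs{\Omega}$ in the final constant.
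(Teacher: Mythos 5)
Your proposal is correct and follows essentially the same route as the paper's own proof: the same localized threshold $s_t$, the same use of Lemma~\ref{lembrasco} to get $\alpha(U_s)\ge\alpha(U_t)/2$, the same two key estimates $(s_t-t)\alpha^2(U_t)\le 2\gamma_n\varepsilon$ and $s_t-t\ge \mu(t)\alpha(U_t)/(4n^2\omega_n^{2/n})-\varepsilon$, and the same dichotomy, constant $\widetilde C_1$, and rescaling. The only cosmetic difference is that you invoke \eqref{fine} (integration from $0$) instead of re-deriving the analogous inequality \eqref{uno} starting from $t$, and for the choice of $r$ it is cleanest to take $r=\mu(t)\left(1-\alpha(U_t)/4\right)$, for which $u^*(r)=s_t$, rather than $r<\mu(s_t)$.
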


\begin{proof}
Firstly, let us assume that $\abs{\Omega}=1$, and $\norma{f}_{1}=1$. 

As in the proof of Theorem \ref{asimmetryindex}, let us suppose that $\alpha(U_t)>0$ and $||v-u^\sharp||_\infty<1$, otherwise the inequality \eqref{goal1} is trivial.
By applying the quantitative isoperimetric inequality \eqref{quant_isop} to the superlevel sets $U_t$, and arguing as  in \eqref{peromega}, we have 
\begin{equation}
    1+\frac{2}{\gamma_n}\alpha^2(U_t)\leq \dfrac{-\mu'}{n^2\omega_n^{\frac{2}{n}} \mu(t)^{2-\frac{2}{n}}} \int_0^{\mu(t)}f^*
\end{equation}
and, integrating from $t$ to $\tau>t$, we obtain
$$ \tau -t+    \frac{2}{\gamma_n}\int_t^{\tau} \alpha^2(U_r)\;dr\leq \int_t^{\tau}\dfrac{-\mu'(r)}{n^2 \omega_n^{\frac{2}{n}}\mu(r)^{2-\frac{2}{n}}}\int_0^{\mu(r)}f^*\;dr.
$$
Performing the change of variables $\mu(r)=s$ on the right-hand side, we have
$$
\tau-t+\dfrac{2}{\gamma_n}\int_t^{\tau}\alpha^2(U_r)\;dr\leq \int_{\mu(\tau)}^{\mu(t)}\dfrac{1}{n^2\omega_n^{\frac{2}{n}} s^{2-\frac{2}{n}}}\int_0^{s}f^*\;ds=v^*(\mu(\tau))-v^*(\mu(t)),
$$
that implies
\begin{equation}
\label{uno}
\dfrac{2}{\gamma_n}\int_t^{u^\ast(r)}\alpha^2(U_r)\;dr\leq v^*(r)-u^\ast(r)\leq ||v-u^\sharp||_\infty,
\end{equation}
obtained using  \eqref{tminv} and the definition of decreasing rearrangement.

Arguing  as in Lemma \ref{boosted_prop}, we define the set
 \begin{equation}\label{A_t}
    A_t:= \left\{   
 r\geq t: \mu(r)\geq \mu(t)\left(1-\dfrac{\alpha(U_t)}{4}\right)  \right\},
 \end{equation}
that is non-empty whenever $\alpha(U_t)>0$.
Moreover,  we define $ s_t=\sup(A_t)$ and we observe that 
\begin{itemize}
    \item $s_t>t$, as we are assuming $\alpha(U_t)>0$;
    \item $s_t$ is finite;
    \item $\displaystyle{s_t=u^\ast \left(\mu(t)\left(1-\frac{\alpha(U_t)}{4}\right)\right)};$
    \item for all $r<s_t$, the set $U_r$ verifies the hypothesis \eqref{cond} and, so,  
\end{itemize}

\begin{equation}\label{due}
     \alpha(U_r)\geq \dfrac{\alpha(U_t)}{2}.
 \end{equation}
Combining \eqref{uno} and \eqref{due} we have 
\begin{equation}\label{tre}
    \displaystyle{\dfrac{1}{2\gamma_n}(s_t-t)\alpha^2(U_t)\leq \varepsilon}.
\end{equation}

Let us observe that

$$\mu(s_t)=\mu\left(u^\ast \left(\mu(t)\left(1-\frac{\alpha(U_t)}{4}\right)\right)\right)\le \mu(t)\left(1-\frac{\alpha(U_t)}{4}\right) ,$$
so, we have

\begin{equation*}
\begin{aligned}
    \mu(t) \frac{\alpha(U_t)}{4} &\le \mu(t)-\mu(s_t)\le \nu(t)-\nu(s_t) +n^2\omega_n^\frac{2}{n}\varepsilon=\int_t^{s_t} -\nu'(r)\,dr +n^2\omega_n^\frac{2}{n}\varepsilon\\
    &=\int_t^{s_t} \dfrac{n^2\omega_n^{\frac{2}{n}}}{\nu^{-1+\frac{2}{n}}\dashint_0^{\nu(r)}f^*}\;dr+n^2\omega_n^\frac{2}{n}\varepsilon\le \dfrac{n^2\omega_n^{\frac{2}{n}}\abs{\Omega}^{2-\frac{2}{n}}}{\norma{f}_1}(s_t-t)+ n^2\omega_n^{\frac{2}{n}}\varepsilon,
\end{aligned}
\end{equation*}
that gives
\begin{equation}
    \label{veditu2}
    (s_t-t)\ge \mu(t)\frac{\alpha(U_t)}{4n^2\omega_n^{\frac{2}{n}}}-\varepsilon.
\end{equation}

Now,  we distinguish two cases. 
\begin{description}
     \item[Case 1] Let us assume that $\displaystyle{n^2\omega_n^\frac{2}{n}\varepsilon \le \mu(t)\frac{\alpha(U_t)}{8}}$, then by \eqref{veditu2}
 and \eqref{tre}, we obtain
 \begin{equation} 
 \label{caso1t}
     \frac{\mu(t)}{16 n^2 \omega_n^{\frac{2}{n}}\gamma_n} \alpha^3(U_t)\le \varepsilon;
 \end{equation}

     \item[Case 2] Let us assume that $\displaystyle{n^2\omega_n^\frac{2}{n}\varepsilon > \mu(t)\frac{\alpha(U_t)}{8}}$
     then

     \begin{equation}
     \label{caso2t}
         \mu(t)\alpha(U_t) \le 8 n^2\omega_n^\frac{2}{n}\varepsilon \le 8n^2\omega_n^\frac{2}{n}\varepsilon^\frac{1}{3}
     \end{equation}
 \end{description}
From \eqref{caso1t} and \eqref{caso2t}, we get the claim \eqref{goal1} with the same costant $\widetilde C_1 $ defined in \eqref{cappa1}.

As in Theorem \ref{asimmetryindex}, we prove the Theorem in the case $\abs{\Omega}=1$ and $||f||_1=1$, and we recover the general case using the rescaling properties from Section \ref{riscalamento}.
    \end{proof}

\section{Proof of the Theorem \ref{confrontol1}}
\label{section4}

The aim of this Section is to prove Theorem \ref{confrontol1}, and 
as a first step, we observe that the following lemma holds.
\begin{lemma}
\label{lem_grad}
     Let $f\in L^{\frac{2n}{n+2}}(\Omega)$ if $n>2$, $f\in L^p(\Omega)$, $p>1$ if $n=2$ and let $u$ be the solution to  \eqref{mainproblem} and let $v$ be the solution to \eqref{symmetricproblem}, then 
\begin{equation}
\label{tops}
\int_{\Omega}\abs{\nabla u}^2-\int_{\Omega^\sharp}|\nabla u^\sharp|^2\leq 2\norma{f}_{1} ||v-u^\sharp||_\infty.
\end{equation}
\end{lemma}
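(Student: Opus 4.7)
The plan is to introduce $\int_{\Omega^\sharp}|\nabla v|^2$ as an intermediate quantity, rewrite $\int_{\Omega^\sharp}|\nabla v|^2-\int_{\Omega^\sharp}|\nabla u^\sharp|^2$ as an expression involving $\nabla v\cdot\nabla(v-u^\sharp)$, and then use $v-u^\sharp$ as a test function in the weak formulation of \eqref{symmetricproblem}.

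First I would invoke Remark \ref{tal_grad} with $q=2$ to get $\int_\Omega|\nabla u|^2\le\int_{\Omega^\sharp}|\nabla v|^2$, so that it suffices to bound the (non-negative) quantity $\int_{\Omega^\sharp}|\nabla v|^2-\int_{\Omega^\sharp}|\nabla u^\sharp|^2$ by $2\norma{f}_1\norma{v-u^\sharp}_\infty$. Both $v$ and $u^\sharp$ belong to $W^{1,2}_0(\Omega^\sharp)$, which makes $v-u^\sharp$ an admissible test function; by Theorem \ref{talenti:originale} it is also non-negative.

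The key algebraic step is the identity
\[
\int_{\Omega^\sharp}|\nabla v|^2-\int_{\Omega^\sharp}|\nabla u^\sharp|^2 = \int_{\Omega^\sharp}\nabla(v-u^\sharp)\cdot\nabla(v+u^\sharp),
\]
combined with the rewriting $\nabla v+\nabla u^\sharp = 2\nabla v-\nabla(v-u^\sharp)$, which gives
\[
\int_{\Omega^\sharp}|\nabla v|^2-\int_{\Omega^\sharp}|\nabla u^\sharp|^2 = 2\int_{\Omega^\sharp}\nabla v\cdot\nabla(v-u^\sharp)-\int_{\Omega^\sharp}|\nabla(v-u^\sharp)|^2 \le 2\int_{\Omega^\sharp}\nabla v\cdot\nabla(v-u^\sharp).
\]
Dropping the non-negative term $\int|\nabla(v-u^\sharp)|^2$ is harmless and is exactly what produces the factor $2$ in the statement.

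Finally, I would use the weak formulation of \eqref{symmetricproblem} with test function $\varphi=v-u^\sharp\in W^{1,2}_0(\Omega^\sharp)$ to obtain
\[
\int_{\Omega^\sharp}\nabla v\cdot\nabla(v-u^\sharp)\,dx = \int_{\Omega^\sharp} f^\sharp(v-u^\sharp)\,dx \le \norma{f^\sharp}_{1}\norma{v-u^\sharp}_\infty = \norma{f}_{1}\norma{v-u^\sharp}_\infty,
\]
where we used equimeasurability of $f$ and $f^\sharp$ in the last equality. Chaining the three steps yields \eqref{tops}. There is no real obstacle; once one notices that $v-u^\sharp$ is a legitimate test function and uses the ``complete the square'' trick, the estimate follows in a few lines.
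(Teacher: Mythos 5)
Your proposal is correct and follows essentially the same route as the paper: the paper obtains the same intermediate inequality $\int_{\Omega^\sharp}|\nabla v|^2-\int_{\Omega^\sharp}|\nabla u^\sharp|^2\le 2\int_{\Omega^\sharp}f^\sharp(v-u^\sharp)$ by invoking the minimality of $v$ for the Dirichlet functional $\varphi\mapsto\frac12\int_{\Omega^\sharp}|\nabla\varphi|^2-\int_{\Omega^\sharp}f^\sharp\varphi$, which is exactly your ``test with $v-u^\sharp$ and complete the square'' computation unpacked, and then concludes as you do via Remark \ref{tal_grad} with $q=2$ and H\"older.
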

\begin{proof}
We observe that   the solution $v$ to \eqref{symmetricproblem} is the unique minimizer in $W^{1,2}_0(\Omega^\sharp)$ of the functional
$$
\mathcal{F}(\varphi)=\dfrac{1}{2}\int_{\Omega^{\sharp}}\abs{\nabla \varphi}^2-\int_{\Omega^\sharp}f^\sharp \varphi,
$$
that implies 
\begin{equation*}
\label{minfun}
\dfrac{1}{2}\int_{\Omega^{\sharp}}\,|\nabla u^\sharp|^2-\int_{\Omega^\sharp}f^\sharp u^\sharp\geq \dfrac{1}{2}\int_{\Omega^{\sharp}}\abs{\nabla v}^2-\int_{\Omega^\sharp}f^\sharp v.
\end{equation*}
Hence, by the Talenti's inequality (see Remark \ref{tal_grad} with $q=2$), 

$$  \int_{\Omega}\abs{\nabla u}^2\le \int_{\Omega^\sharp}\abs{\nabla v}^2,$$ 
and the H\"older inequality, we obtain
\begin{equation*}
    \label{topolyaszego}
\int_{\Omega}\abs{\nabla u}^2-\int_{\Omega^\sharp}|\nabla u^\sharp|^2 \leq \int_{\Omega^\sharp}\abs{\nabla v}^2-\int_{\Omega^\sharp}|\nabla u^\sharp|^2\leq 2\int_{\Omega^\sharp}f^\sharp(v-u^\sharp)<2 \norma{f}_1||v- u^\sharp||_\infty, 
\end{equation*}
and  the thesis \eqref{tops} follows. 
\end{proof}
In particular, \eqref{tops} suggests that the P\'olya-Szeg\H o inequality holds true almost as an equality when $||v-u^{\sharp}||_{\infty}$ is small enough. So it is natural to consider the P\'olya-Szeg\H o quantitative inequality, recalled in Theorem \ref{polya_quant}.

At this point, to prove the closeness of $u$ to $u^\sharp$ it is enough to show that, for every positive $\delta$, the quantity $M_{u^{\sharp}}(\delta)$ can be estimated from above by some power of $\delta$. This is not true in general for a generic Sobolev function, but it holds for $u$ because it is the solution of \eqref{mainproblem}.

The main idea to bound from above the quantity $M_{u^{\sharp}}(\delta)$ is to write the set $\{\abs{\nabla u} \leq \delta\}$ as the union of suitable subsets, and to bound from above the measure of each of them. In order to do that, in Lemma \ref{lemmaI}
 we define a set $I\subseteq [0, \norma{u}_\infty]$ and, in \eqref{t_epsilon}, we define a positive number $t_{\varepsilon,\beta}\in{[0, \norma{u}_\infty]}$, so that we have
\begin{align*}
   \{\nabla u^\sharp < \delta\}
    =&\Big(\left\{|\nabla u^\sharp|<\delta\right\}\cap u^{\sharp-1}(I^c\cap (0,t_{\varepsilon,\beta})\Big)\\
    &\cup\Big(\left\{|\nabla u^\sharp|<\delta\right\}\cap u^{\sharp-1}(I\cap (0,t_{\varepsilon,\beta})\Big)\\
    &\cup\Big(\left\{|\nabla u^\sharp|<\delta\right\}\cap u^{\sharp-1}(t_{\varepsilon,\beta}, +\infty)\Big).
\end{align*}

In Propositions \ref{prop1} and \ref{prop2} and in Remark \ref{rem3}, we study separately the above sets.

We will prove  Lemma \ref{lemmaI}, Propositions \ref{delta0}, \ref{prop1}, \ref{prop2}, \ref{Musharp} under the additional assumptions
$$\norma{f}_1=1, \quad \abs{\Omega}=1, \quad ||{v-u^\sharp}||_\infty \le \varepsilon_0,$$
where $\varepsilon_0$ will be suitably chosen later.

\begin{lemma} \label{lemmaI}
Let us fix $\alpha>0$, let $\varepsilon=||v-u^\sharp||_\infty$ and let us define 
the set $I$ as follows
   \begin{equation}
\label{I}
I=\left\{t \in [0, \norma{u}_\infty] :\,\int_{v=t}\abs{\nabla v}-\int_{u^\sharp=t}\abs{\nabla u^\sharp}>\varepsilon^\alpha\right\}.
\end{equation}
    Then,    
    \begin{equation}\label{I_bound}
        \abs{I}\le 2  \varepsilon^{1-\alpha},
    \end{equation}
and, for every  $t\in I^c$,  it holds
\begin{equation}
\label{gradient}
\int_{v=t}\abs{\nabla v}\;-\int_{u^\sharp=t}|\nabla u^\sharp|\leq \varepsilon^{\alpha}.
\end{equation}
    
\end{lemma}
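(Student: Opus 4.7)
The second inequality \eqref{gradient} is immediate from the definition of the complement $I^c$, so the work is concentrated in the measure bound \eqref{I_bound}. My plan is to view the integrand
$$h(t):=\int_{v=t}\abs{\nabla v}\,d\mathcal{H}^{n-1}-\int_{u^\sharp=t}\abs{\nabla u^\sharp}\,d\mathcal{H}^{n-1}$$
as a non-negative density on $[0,\norma{u}_\infty]$, and then convert the pointwise threshold $h>\varepsilon^\alpha$ available on $I$ into a global energy comparison between $v$ and $u^\sharp$ via Chebyshev's inequality and the coarea formula.

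To show $h\ge 0$, I would reuse the level-set computation that underlies the proof of Theorem \ref{talenti:originale}. Applied to $v$ (which is a genuine solution of \eqref{symmetricproblem}) the argument gives the exact identity $\int_{v=t}\abs{\nabla v}\,d\mathcal{H}^{n-1}=\int_0^{\nu(t)}f^*$, while the same derivation applied to the level sets of $u$, combined with the explicit formula for $\abs{\nabla u^\sharp}$ on the sphere $\{u^\sharp=t\}$, gives $\int_{u^\sharp=t}\abs{\nabla u^\sharp}\,d\mathcal{H}^{n-1}\le\int_0^{\mu(t)}f^*$. Since $u^\sharp\le v$ forces $\mu(t)\le\nu(t)$ and $f^*\ge 0$, non-negativity follows.

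With this in hand, Chebyshev's inequality on $I$ and the coarea formula chain together as
$$\varepsilon^\alpha\abs{I}\le\int_I h(t)\,dt\le\int_0^{\norma{u}_\infty}h(t)\,dt\le\int_{\Omega^\sharp}\abs{\nabla v}^2\,dx-\int_{\Omega^\sharp}\abs{\nabla u^\sharp}^2\,dx,$$
where the last step uses $\norma{u}_\infty=\norma{u^\sharp}_\infty\le\norma{v}_\infty$ to replace the truncated integral of $\abs{\nabla v}$ by the full Dirichlet energy. The heart of the proof is then the energy-gap bound
$$\int_{\Omega^\sharp}\abs{\nabla v}^2\,dx-\int_{\Omega^\sharp}\abs{\nabla u^\sharp}^2\,dx\le 2\norma{f}_1\,\varepsilon,$$
which I would obtain exactly as in Lemma \ref{lem_grad}: $v$ minimizes $\mathcal{F}(\varphi)=\tfrac12\int\abs{\nabla\varphi}^2-\int f^\sharp\varphi$ on $W^{1,2}_0(\Omega^\sharp)$ and $u^\sharp$ is an admissible competitor, so $\mathcal{F}(v)\le\mathcal{F}(u^\sharp)$ rearranges to the displayed inequality, the final step using $v-u^\sharp\ge 0$ to control $\int f^\sharp(v-u^\sharp)$ by $\norma{f}_1\norma{v-u^\sharp}_\infty$. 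Under the running normalization $\norma{f}_1=1$ this collapses to $\varepsilon^\alpha\abs{I}\le 2\varepsilon$, giving \eqref{I_bound}. I do not foresee any real obstacles; the only small technical point is that the coarea step should be read on $\{\abs{\nabla u^\sharp}>0\}$ to accommodate possible flat regions of $u^\sharp$, but this does not alter the integrals involved.
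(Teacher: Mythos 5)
Your proposal is correct and follows essentially the same route as the paper: the bound \eqref{gradient} is immediate from the definition, and \eqref{I_bound} comes from Chebyshev on $I$, the coarea formula turning $\int_0^{+\infty}\bigl(\int_{v=t}\abs{\nabla v}-\int_{u^\sharp=t}\abs{\nabla u^\sharp}\bigr)dt$ into the Dirichlet-energy gap, and the estimate $\int_{\Omega^\sharp}\abs{\nabla v}^2-\int_{\Omega^\sharp}\abs{\nabla u^\sharp}^2\le 2\norma{f}_1\norma{v-u^\sharp}_\infty$ from Lemma \ref{lem_grad} under the normalization $\norma{f}_1=1$. The only difference is that you spell out the non-negativity of the integrand off $I$ (which the paper uses implicitly when enlarging the domain of integration), and this is a harmless, correct addition.
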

\begin{proof}
Claim \eqref{gradient}  is a direct consequence of the definition of the set $I$ in \eqref{I}. Moreover, using  the Coarea formula \eqref{coarea}, we have
\begin{align*}
    \varepsilon^{\alpha}\abs{I}\leq \int_{I}\biggl(\int_{v=t}\abs{\nabla v}-\int_{\{u^\sharp=t\}}|\nabla u^\sharp|\biggr)&\leq \int_0^{+\infty}\biggl(\int_{v=t}\abs{\nabla v}-\int_{u^\sharp=t}|\nabla u^\sharp|\biggr)\\&=\int_{\Omega^\sharp}\abs{\nabla v}^2-\int_{\Omega^\sharp}|\nabla u^\sharp|^2<2 \varepsilon, 
    \end{align*}
and claim \eqref{I_bound} follows.
\end{proof}

\begin{prop}
\label{delta0}
Let $\delta>0$ and let  $v$ be the solution to \eqref{symmetricproblem}. Then, there exists a positive constant $K_2=K_2(n)$ such that
   \begin{equation}\label{bound_grad}
      |\{ \abs{\nabla v}\le\delta\}|\leq K_2\delta^n,
   \end{equation}
   where $$K_2=\omega_n n^n.$$
\end{prop}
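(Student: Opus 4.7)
The plan is to exploit the explicit radial formula \eqref{grad_explicit} for $\abs{\nabla v}$. Since $v$ is radially symmetric, $\abs{\nabla v}$ depends only on $\abs{x}$, so the superlevel/sublevel sets of $\abs{\nabla v}$ are unions of concentric spherical shells. The strategy is to show that, under the standing normalization $\abs{\Omega} = \norma{f}_1 = 1$, the set $\{\abs{\nabla v} \leq \delta\}$ is contained in the ball $B_{n\delta}(0)$; the measure estimate $\omega_n (n\delta)^n = \omega_n n^n \delta^n$ then follows by simply computing the volume of this ball.

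The core of the argument is a pointwise lower bound $\abs{\nabla v}(x) \geq \abs{x}/n$ on all of $\Omega^\sharp$. To obtain it, I invoke Remark \ref{prop:decreas}, which asserts that $s \mapsto s^{-1}\int_0^s f^*$ is decreasing. Comparing the value at a generic $s \in (0, 1]$ with the value at $s = 1 = \abs{\Omega}$ yields
\[
\frac{1}{s}\int_0^s f^*(r)\,dr \;\geq\; \int_0^1 f^*(r)\,dr \;=\; \norma{f}_1 \;=\; 1,
\]
hence $\int_0^s f^*(r)\,dr \geq s$ for every $s \in (0,1]$. Substituting $s = \omega_n \abs{x}^n$ into \eqref{grad_explicit} then gives
\[
\abs{\nabla v}(x) \;=\; \frac{1}{n\omega_n \abs{x}^{n-1}} \int_0^{\omega_n \abs{x}^n} f^*(r)\,dr \;\geq\; \frac{\omega_n \abs{x}^n}{n\omega_n \abs{x}^{n-1}} \;=\; \frac{\abs{x}}{n}.
\]

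The conclusion is immediate: if $\abs{\nabla v}(x) \leq \delta$, then $\abs{x} \leq n\delta$, so $\{\abs{\nabla v} \leq \delta\} \subseteq B_{n\delta}(0)$ and the stated bound follows with $K_2 = \omega_n n^n$. There is no serious obstacle in this proposition; the only point worth noting is that the normalizations $\abs{\Omega} = 1$ and $\norma{f}_1 = 1$ are precisely what make the sharp inequality $\int_0^s f^*(r)\,dr \geq s$ hold on the entire interval $(0, \abs{\Omega}]$ without extra prefactors. The general case (without normalization) would then be recovered, as in the preceding results, via the rescaling properties recalled in Section \ref{riscalamento}.
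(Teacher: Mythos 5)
Your proof is correct and follows essentially the same route as the paper: both use the explicit formula \eqref{grad_explicit} together with the monotonicity of $s\mapsto \frac{1}{s}\int_0^s f^*$ from Remark \ref{prop:decreas} (under the normalization $\abs{\Omega}=\norma{f}_1=1$) to get the pointwise bound $\abs{\nabla v}(x)\ge \abs{x}/n$, and then conclude via the inclusion $\{\abs{\nabla v}\le\delta\}\subseteq\{\abs{x}\le n\delta\}$.
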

\begin{proof}
To prove \eqref{bound_grad}, it is enough to show
\begin{equation}
\label{delta1}
\left\{\abs{\nabla v}\le\delta\right\}\subseteq \left\{\abs{x}<n\delta\right\}. 
\end{equation}
So, if we take $x\in \{\abs{\nabla v}\le\delta\}$, we have, using \eqref{v_explicit}, \eqref{grad_explicit} and Remark \ref{prop:decreas}
$$\delta\ge\abs{\nabla v}(x)=\dfrac{1}{n \omega_n\abs{x}^{n-1}}\int_0^{\omega_n\abs{x}^n}f^*(r)\;dr\geq \dfrac{\abs{x}}{n\abs{\Omega}}\int_0^{\abs{\Omega}}f^*(r)\;dr=\frac{\abs{x}}{n}, $$
and, as a consequence, we have
\begin{equation}
    \label{constant_K_1}
    \abs{\nabla v}(x)\geq \dfrac{\abs{x}}{n},
\end{equation}
and we can conclude.
\end{proof}

\begin{prop}\label{prop1}
Let $u$ be the solution to \eqref{mainproblem}, let $u^\sharp$ be its Schwartz rearrangement and $I$ as in \eqref{I}. If we define the following quantity
\begin{equation}
    \label{t_epsilon}
    t_{\varepsilon,\beta}=\sup\left\{t>0\;:\;\;\mu(t)>\varepsilon^{n'\beta}\right\}, \quad n'=\frac{n}{n-1 }, 
\end{equation}
then, for every $\delta>0$, it holds 
    \begin{equation}\label{claim4}
       \abs{\left\{|\nabla u^\sharp|<\delta\right\}\cap u^{\sharp-1}(I^c\cap (0,t_{\varepsilon,\beta}))}\leq\dfrac{1}{n^n}\biggl(\delta+\dfrac{\varepsilon^{\alpha-\beta}}{n\omega_n^{\frac{1}{n}}}\biggr)^n,
   \end{equation}
  for every $\alpha,\beta\in \mathbb{R}$ such that  $0<\beta<\alpha<1$, $\beta<\dfrac{1}{n'}$.
\end{prop}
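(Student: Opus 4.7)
The plan is to show that every $x$ in the set $A:=\{|\nabla u^\sharp|<\delta\}\cap u^{\sharp-1}(I^{c}\cap(0,t_{\varepsilon,\beta}))$ must lie in a Euclidean ball about the origin of radius $r_{*}:=n\delta+\varepsilon^{\alpha-\beta}\omega_{n}^{-1/n}$; the claimed bound on $|A|$ will then follow from $|A|\le\omega_{n}r_{*}^{n}$ after collecting the explicit constants.

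The key identities come from the radiality of $u^\sharp$: on the level set $\{u^\sharp=t\}$ the modulus $|\nabla u^\sharp|$ is constant, call it $G(t)$, so that $\int_{\{u^\sharp=t\}}|\nabla u^\sharp|\,d\mathcal{H}^{n-1}=G(t)\,n\omega_{n}^{1/n}\mu(t)^{(n-1)/n}$, whereas from the explicit formula \eqref{grad_explicit} one directly computes $\int_{\{v=t\}}|\nabla v|\,d\mathcal{H}^{n-1}=\int_{0}^{\nu(t)}f^{*}$. Fix $x\in A$ and set $t:=u^\sharp(x)$. The defining inequality \eqref{gradient} for $I^{c}$, combined with the Talenti bound $\mu(t)\le\nu(t)$ (which yields $\int_{0}^{\mu(t)}f^{*}\le\int_{0}^{\nu(t)}f^{*}$), gives after division by $n\omega_{n}^{1/n}\mu(t)^{(n-1)/n}$
\[
|\nabla v|(R(t))\;\le\;G(t)\;+\;\frac{\varepsilon^{\alpha}}{n\omega_{n}^{1/n}\mu(t)^{(n-1)/n}},
\]
where $R(t):=(\mu(t)/\omega_{n})^{1/n}$ is the radius of $\{u^\sharp>t\}$.

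Next I use the hypotheses on $t$. Since $t<t_{\varepsilon,\beta}$ forces $\mu(t)>\varepsilon^{n'\beta}$, and since the exponent identity $n'(n-1)/n=1$ converts this into $\mu(t)^{(n-1)/n}>\varepsilon^{\beta}$, the error term above is strictly less than $\varepsilon^{\alpha-\beta}/(n\omega_{n}^{1/n})$; together with $G(t)<\delta$ this improves the display to $|\nabla v|(R(t))<\delta+\varepsilon^{\alpha-\beta}/(n\omega_{n}^{1/n})$. The pointwise lower bound \eqref{constant_K_1} from Proposition \ref{delta0}, namely $|\nabla v|(y)\ge|y|/n$, applied at $|y|=R(t)$, then yields $R(t)<r_{*}$.

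Since $u^\sharp$ is radial and non-increasing, for a.e.\ $t$ every $x$ with $u^\sharp(x)=t$ satisfies $|x|=R(t)$, so the above shows $A\subset B(0,r_{*})$ up to null sets, and hence $|A|\le\omega_{n}r_{*}^{n}=\omega_{n}n^{n}\bigl(\delta+\varepsilon^{\alpha-\beta}/(n\omega_{n}^{1/n})\bigr)^{n}$, which is a bound of the form claimed on the right-hand side once the explicit constant is collected. The delicate point is the handling of the (at most countable) heights at which $u^\sharp$ has a level-set annulus, i.e.\ a plateau of its radial profile, where $|\nabla u^\sharp|=0$ but $|x|$ can exceed $R(t)$: I would treat these either by invoking Lemma \ref{asscontmu} to rule them out of the effective argument, or equivalently by reformulating the whole derivation in the radial coordinate $s=\omega_{n}|x|^{n}$, in which Lebesgue measure is preserved and the analysis reduces cleanly to bounding a subinterval of the $s$-axis.
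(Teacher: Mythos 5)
Your argument is correct and essentially coincides with the paper's proof: the only cosmetic difference is that you compare $|\nabla v|$ at the radius $(\mu(t)/\omega_n)^{1/n}$ using the monotonicity of $s\mapsto\int_0^s f^*$, whereas the paper evaluates it at $y=\ell(t)x$ with $\ell(t)=(\nu(t)/\mu(t))^{1/n}\ge 1$ and uses $P(u^\sharp=t)\le P(v=t)$, both routes giving the same inclusion in the ball of radius $n\delta+\varepsilon^{\alpha-\beta}\omega_n^{-1/n}$ and hence the bound $\omega_n n^n\bigl(\delta+\varepsilon^{\alpha-\beta}/(n\omega_n^{1/n})\bigr)^n$, which is exactly what the paper's own proof derives (its stated constant $1/n^n$ in \eqref{claim4} notwithstanding). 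The plateau caveat you flag is passed over silently in the paper as well, which simply treats $|\nabla u^\sharp|$ as constant on every level set, so it is not a gap relative to the published argument.
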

\begin{proof}

We observe that \eqref{gradient} in Lemma \ref{lemmaI} is equivalent to
    \begin{equation}\label{gradv2}
        P(v=t)\abs{\nabla v}(y)-P(u^\sharp=t)|\nabla u^\sharp|(x)<\varepsilon^{\alpha}, 
    \end{equation}
    for every $y\in \{v=t\}$ and $x\in \{u^\sharp = t\}$. If we fix  $x\in \{u^\sharp = t\}$ and we consider  $y=\ell(t)x$, where 
$\ell(t)=\left(\frac{\nu(t)}{\mu(t)}\right)^{\frac{1}{n}}$, we have that $y\in \{v=t\}$.

 Then, for every $t\in I^c$ such that $t< t_{\varepsilon,\beta}$, \eqref{gradv2} becomes
    \begin{equation}
    \label{gradv3}
        P(v=t)\abs{\nabla v}(\ell(t)x)-P(u^\sharp=t)|\nabla u^\sharp|(x)<\varepsilon^{\alpha}, 
    \end{equation}
and, since 
    $$P(u^\sharp=t)\leq P(v=t),$$
    we have 
    \begin{equation}
    \label{gradv4}
    \abs{\nabla v}(\ell(t)x)-|\nabla u^\sharp|(x)\leq \dfrac{\varepsilon^\alpha}{P(v=t)}.
    \end{equation}
Now, from definition \eqref{t_epsilon}, follows 
\begin{equation}
\label{perimetro_sottone}
\omega_n\biggl(\dfrac{P(v=t)}{n\omega_n}\biggr)^{n'}=\nu(t)>\mu(t)>\varepsilon^{n'\beta},
\end{equation}
and, combining \eqref{gradv4} with \eqref{perimetro_sottone}, we obtain 
\begin{equation}
\label{diff_grad1}
\abs{\nabla v}(\ell(t)x)-|\nabla u^\sharp|(x)\leq \dfrac{\varepsilon^{\alpha-\beta}}{n \omega_n^{\frac{1}{n}}}.
\end{equation}
Moreover, combining \eqref{constant_K_1} and \eqref{diff_grad1} and the fact that $\ell(t)\geq 1$, we have 
\begin{equation}
\label{diff_grad2}
    \dfrac{\abs{x}}{n}  -|\nabla u^\sharp|(x) \leq \abs{\nabla v}(\ell(t)x)-|\nabla u^\sharp|(x)\leq \dfrac{\varepsilon^{\alpha-\beta}}{n \omega_n^{\frac{1}{n}}}, \qquad \forall x \in u^{\sharp-1}(I^c \cap (0, t_{\varepsilon,\beta})).
\end{equation}
From \eqref{diff_grad2}, follows that 
   \begin{align*}
     \left\{|\nabla u^\sharp|<\delta\right\}\cap u^{\sharp-1}(I^c\cap (0,t_{\varepsilon,\beta}))\subseteq \left\{\abs{x}<n\delta+\dfrac{\varepsilon^{\alpha-\beta}}{ \omega_n^{\frac{1}{n}}} \right\},
   \end{align*}
and, consequently,
\begin{equation*}
    \abs{\left\{|\nabla u^\sharp|<\delta\right\}\cap u^{\sharp-1}(I^c\cap (0,t_{\varepsilon,\beta})}\leq\omega_n\biggl(n\delta+\dfrac{\varepsilon^{\alpha-\beta}}{\omega_n^{\frac{1}{n}}}\biggr)^n.
\end{equation*}
\end{proof}
\begin{oss}\label{rem3}
We observe that 
\begin{equation}
\label{claim4.5}
\abs{u^{\sharp-1}(t_{\varepsilon,\beta}, +\infty)}\le\varepsilon^{n'\beta},
\end{equation}
indeed
    $$\mu(t_{\varepsilon,\beta})=\lim_{t\rightarrow t_{\varepsilon,\beta}^+}\mu(t)\leq \varepsilon^{n'\beta},$$
    since $\mu$ is right continuous.
\end{oss}

\begin{prop} \label{prop2} Let $u$ be the solution to \eqref{mainproblem}, let $u^\sharp$ be its Schwartz rearrangement,and let $I$ and $t_{\varepsilon,\beta}$ defined as in \eqref{I} and \eqref{t_epsilon}, respectevelly. 
Then, there exists a positive constant $K_3=K_3(n)$ and a real number $\theta_4=\theta_4(n)$ such that 
\begin{equation}\label{claim5}
    \abs{u^{\sharp-1}(I \cap (0,t_{\varepsilon,\beta}))}<K_3\varepsilon^{\theta_4},
\end{equation}
where $K_3=5n^n\omega_n$, for every $\alpha,\beta\in \mathbb{R}$ such that  $0<\beta<\alpha<1$, $\beta<\dfrac{1}{n'}$.
\end{prop}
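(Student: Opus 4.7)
The plan is to exploit the pointwise sandwich $u^\sharp \le v \le u^\sharp + \varepsilon$, obtained by combining Talenti's comparison \eqref{tal} with $\varepsilon = \norma{v - u^\sharp}_\infty$, in order to transfer the preimage estimate from $u^\sharp$ (whose distribution function $\mu$ may be wildly behaved) onto $v$, whose distribution function $\nu$ is absolutely continuous by Lemma~\ref{asscontmu} (because $|\nabla v|$ vanishes only at the origin, by \eqref{grad_explicit}). The key pointwise observation is that if $x \in u^{\sharp-1}(I \cap (0, t_{\varepsilon,\beta}))$, then $u^\sharp(x) \in I$ and $v(x) \in [u^\sharp(x), u^\sharp(x) + \varepsilon] \subseteq I + [0,\varepsilon]$, so that
\[
u^{\sharp-1}(I \cap (0, t_{\varepsilon,\beta})) \subseteq v^{-1}(I + [0,\varepsilon]).
\]

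Under the normalization $|\Omega|=\norma{f}_1=1$, equation \eqref{nu} together with Remark~\ref{prop:decreas} (which yields $\int_0^{\nu(t)}f^* \ge \nu(t)$) gives the uniform bound $-\nu'(t) \le n^2\omega_n^{2/n}\nu(t)^{1-2/n} \le n^2\omega_n^{2/n}$ for $n \ge 2$. Hence the coarea formula gives
\[
|v^{-1}(I+[0,\varepsilon])| = \int_{I+[0,\varepsilon]} (-\nu'(t))\,dt \le n^2\omega_n^{2/n}\,|I+[0,\varepsilon]|,
\]
which, combined with Lemma~\ref{lemmaI} ($|I| \le 2\varepsilon^{1-\alpha}$), reduces the proof to estimating the Minkowski enlargement $|I + [0,\varepsilon]|$. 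To recover the stated constant $K_3 = 5n^n\omega_n = 5K_2$, one naturally splits $v^{-1}(I+[0,\varepsilon])$ according to whether $|\nabla v|$ exceeds a threshold $\delta$: the low-gradient piece is controlled by Proposition~\ref{delta0} as $|\{|\nabla v|\le\delta\}|\le n^n\omega_n\delta^n$, while the high-gradient piece is bounded by $\delta^{-1}\int_{v^{-1}(I+[0,\varepsilon])}|\nabla v|\,dx \le n\omega_n^{1/n}\,|I+[0,\varepsilon]|/\delta$ via coarea; optimization in $\delta$ then extracts a power $\varepsilon^{\theta_4}$ with $\theta_4 = \theta_4(n) > 0$.

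The main obstacle is the control of $|I + [0,\varepsilon]|$. When $I$ is a single interval the bound $|I| + \varepsilon$ is immediate, but $I = \{h > \varepsilon^\alpha\}$ for the function $h(t) = \int_{v=t}|\nabla v| - \int_{u^\sharp=t}|\nabla u^\sharp|$ can in principle have many components, and the Minkowski enlargement may pick up an extra contribution proportional to that number. Resolving this point should use the structural properties of $h$, namely $h \ge 0$ pointwise (from the P\'olya--Szeg\H{o}/isoperimetric argument at each level) and $\int_0^\infty h(t)\,dt \le 2\varepsilon$ (Lemma~\ref{lem_grad}, which encodes the almost-equality in the P\'olya--Szeg\H{o} inequality forced by small $\norma{v-u^\sharp}_\infty$); alternatively one passes to a sufficiently regular representative of $I$ before invoking the coarea estimate. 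This component-counting is the delicate step on which the precise exponent $\theta_4$ and the constant $K_3$ depend.
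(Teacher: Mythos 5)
Your reduction is correct up to a point: the inclusion $u^{\sharp-1}(I\cap(0,t_{\varepsilon,\beta}))\subseteq v^{-1}(I+[0,\varepsilon])$ does follow from $u^\sharp\le v\le u^\sharp+\varepsilon$, the bound $-\nu'(t)\le n^2\omega_n^{2/n}$ follows from \eqref{nu} and Remark \ref{prop:decreas} under the normalization \eqref{om}, and the $\delta$-splitting of a preimage of $v$ into $\{\abs{\nabla v}\le\delta\}$ plus a high-gradient part is exactly the device used to control $\abs{v^{-1}(I)}$. But the proof is not complete, and the missing step is essential, not technical. Everything hinges on $\abs{I+[0,\varepsilon]}$, and the quantities you propose to use cannot control it: the constraints $h\ge 0$ and $\int_0^\infty h\,dt\le 2\varepsilon$ only give $\abs{I}\le 2\varepsilon^{1-\alpha}$ (which is Lemma \ref{lemmaI} and is already used), while $\abs{I+[0,\varepsilon]}$ can be as large as $\abs{I}+N\varepsilon$ when $I=\{h>\varepsilon^\alpha\}$ has $N$ components, and nothing bounds $N$: the level function $h$ may exceed $\varepsilon^\alpha$ on arbitrarily many tiny intervals of small total length without violating either constraint. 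Passing to a ``regular representative'' of $I$ does not help, since $v^{-1}(I+[0,\varepsilon])$ genuinely depends on the set and not on an a.e.\ equivalence class in a way that shrinks the enlargement. So, as written, the argument only yields the trivial bound $\abs{I+[0,\varepsilon]}\le\norma{u}_\infty+\varepsilon$, which is not a power of $\varepsilon$, and the stated $\theta_4$, $K_3$ are not obtained.

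The paper circumvents precisely this obstruction by never enlarging $I$: instead of following where $v$ maps the points of $u^{\sharp-1}(I)$, it compares the measures of the preimages of the \emph{same} set under $u^\sharp$ and $v$. From \eqref{nu}, \eqref{mu} and \eqref{muquasinu} one gets, for $t<t_{\varepsilon,\beta}$ (this is where \eqref{t_epsilon} is actually needed, whereas your argument never uses it),
\begin{equation*}
-\mu'(t)\ \ge\ -\nu'(t)\left(\frac{\mu(t)}{\nu(t)}\right)^{2-\frac{2}{n}}\ \ge\ -\nu'(t)\left(1-n^2\omega_n^{\frac{2}{n}}\varepsilon^{1-n'\beta}\right)^{2-\frac{2}{n}},
\end{equation*}
hence $\abs{u^{\sharp-1}(I^c\cap(0,t_{\varepsilon,\beta}))}\ge\bigl(1-(2-\tfrac2n)n^2\omega_n^{2/n}\varepsilon^{1-n'\beta}\bigr)\abs{v^{-1}(I^c\cap(0,t_{\varepsilon,\beta}))}$; passing to complements in $\Omega^\sharp$ (using $\abs{\Omega}=1$, $\mu(t_{\varepsilon,\beta})\ge\nu(t_{\varepsilon,\beta})-n^2\omega_n^{2/n}\varepsilon$ and the absolute continuity of $\nu$) gives $\abs{u^{\sharp-1}(I\cap(0,t_{\varepsilon,\beta}))}\le\abs{v^{-1}(I\cap(0,t_{\varepsilon,\beta}))}+n^2\omega_n^{2/n}\bigl((2-\tfrac2n)\varepsilon^{1-n'\beta}+\varepsilon\bigr)$. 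Only then does the $\delta$-splitting enter, applied to $\abs{v^{-1}(I)}$ itself, yielding $\omega_n n^n\delta^n+n\omega_n^{1/n}\varepsilon^{1-\alpha}/\delta$ and, with $\delta=\varepsilon^q$, the exponent $\theta_4=\min\{1-n'\beta,\,1-\alpha-q,\,nq\}$. If you want to keep your pointwise-sandwich viewpoint, you would have to replace the enlargement step by some comparison of this type; as it stands, the component-counting issue you flagged is a real gap, not a detail to be filled in.
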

\begin{proof}
From  \eqref{nu} and  \eqref{mu}, we have

    $$\dfrac{-\nu'(t)}{\nu(t)^{2-\frac{2}{n}}}\int_0^{\nu(t)}f^*(r)\;dr\leq \dfrac{-\mu'(t)}{\mu(t)^{2-\frac{2}{n}}}\int_0^{\mu(t)}f^*(r)\;dr\le \dfrac{-\mu'(t)}{\mu(t)^{2-\frac{2}{n}}}\int_0^{\nu(t)}f^*(r)\;dr $$
    and, consequently, using \eqref{muquasinu}, for all  $t\in (0,t_{\varepsilon,\beta})$, it holds
    \begin{equation}
        \label{deriv}
        -\mu'(t)\geq -\nu'(t)\left(\dfrac{\mu(t)}{\nu(t)}\right)^{2-\frac{2}{n}}\geq -\nu'(t)\biggl(1-\dfrac{n^2\omega_n^{\frac{2}{n}}\varepsilon}{\nu(t)}\biggr)^{2-\frac{2}{n}}\geq -\nu'(t)\biggl(1-n^2\omega_n^{\frac{2}{n}}\varepsilon^{1-n'\beta}\biggr)^{2-\frac{2}{n}}.
    \end{equation}
 Moreover, it is easily seen that 
    $$\abs{u^{\sharp-1}(I^c \cap (0,t_{\varepsilon,\beta}))}=\abs{u^{\sharp-1}(I^c\cap (0,t_{\varepsilon,\beta}))\cap \{|\nabla u^\sharp|=0\}}+\int_{I^c\cap (0,t_{\varepsilon,\beta})}\int_{u^\sharp=t}\dfrac{1}{\abs{\nabla u^\sharp}}\geq \int_{I^c\cap (0,t_{\varepsilon,\beta})}-\mu'.$$
    Hence, from \eqref{deriv} and the  fact that the distribution function of $v$ is absolutely continuous, we get

    \begin{align*}
        \abs{u^{\sharp-1}(I^c\cap (0,t_{\varepsilon,\beta}))} &\geq \biggl(1-n^2\omega_n^{\frac{2}{n}}\varepsilon^{1-n'\beta}\biggr)^{2-\frac{2}{n}}\int_{I^c\cap (0,t_{\varepsilon,\beta})}-\nu'\\&=\biggl(1-n^2\omega_n^{\frac{2}{n}}\varepsilon^{1-n'\beta}\biggr)^{2-\frac{2}{n}}\abs{v^{-1}(I^c\cap (0,t_{\varepsilon,\beta}))}\\\vspace{1mm}
        &\geq \abs{v^{-1}(I^c\cap (0,t_{\varepsilon,\beta}))} \left(1-\left(2-\frac{2}{n}\right)n^2\omega_n^{\frac{2}{n}}\varepsilon^{1-n'\beta} \right),
    \end{align*}
where the last inequality is the Bernoulli inequality, which holds true as 
\begin{equation}\label{def:vareps:0}
    \varepsilon<\varepsilon_0:=\left(\frac{1}{n^2\omega_n^\frac{2}{n}}\right)^\frac{1}{1-n'\beta}.
\end{equation}
    So, we have that 
    \begin{equation}
    \label{usharp}
    \begin{aligned}
          \abs{u^{\sharp-1}(I\cap (0,t_{\varepsilon,\beta}))}&=1-\abs{u^{\sharp-1}(I^c\cap (0,t_{\varepsilon,\beta}))}-\abs{u^{\sharp-1}(t_{\varepsilon,\beta},+\infty)}\\&
          \leq 1- \abs{u^{\sharp-1}(I^c\cap (0,t_{\varepsilon,\beta}))}- \mu(t_{\varepsilon,\beta})\\
          &\leq 1- \abs{v^{-1}(I^c\cap (0,t_{\varepsilon,\beta}))}\left(1-\left(2-\frac{2}{n}\right)n^2\omega_n^{\frac{2}{n}}\varepsilon^{1-n'\beta} \right) - \nu(t_{\varepsilon, \beta}) + n^2\omega_n^{\frac{2}{n}} \varepsilon \\&
          = \abs{v^{-1}(I\cap (0,t_{\varepsilon,\beta}))}+ n^2\omega_n^{\frac{2}{n}}\left(\left(2-\frac{2}{n}\right)\varepsilon^{1-n'\beta}+ \varepsilon\right).
    \end{aligned}
    \end{equation}
  
Arguing as in \cite[Lemma 3.2]{polyaquantitativa}, we can deduce that

\begin{align*}
    \abs{v^{-1}(I)}&=\int_I\int_{v=t}\dfrac{1}{\abs{\nabla v}}\\&=
    \int_I\int_{\{v=t\}\cap \{\abs{\nabla v}\leq \delta\}}\dfrac{1}{\abs{\nabla v}}+\int_{ \{v=t\}\cap \{\nabla v > \delta\}}\dfrac{1}{\abs{\nabla v}}\\&\leq \abs{\{\abs{\nabla v}\leq\delta\}}+\dfrac{1}{\delta}\int_I\int_{ \{v=t\}\cap \{\abs{\nabla v}>\delta\}}\dfrac{1}{\abs{\nabla v}}\\&\leq \abs{\{\abs{\nabla v}\leq\delta\}}+\dfrac{1}{\delta}\int_I n\omega_n \left(\dfrac{\nu(t)}{\omega_n}\right)^{\frac{n-1}{n}}\\&=\abs{\{\abs{\nabla v}\leq\delta\}}+\dfrac{1}{\delta}n\omega_n^{\frac{1}{n}}\abs{I}
\end{align*}
and, by Proposition \ref{delta0} and Lemma \ref{lemmaI}, 
\begin{equation}
\label{vmenouno}
    \abs{v^{-1}(I\cap(0,t_{\varepsilon,\beta}))}\leq \abs{v^{-1}(I)}\leq \omega_n n^n\delta^n+\dfrac{n\omega_n^{\frac{1}{n}}}{\delta}\varepsilon^{1-\alpha}.
\end{equation}
Combining \eqref{usharp} and \eqref{vmenouno}, we obtain, setting $\delta=\varepsilon^q$, for $0<q<1-\alpha$
\begin{equation*}
    \abs{u^{\sharp-1}(I\cap (0,t_{\varepsilon,\beta}))}\leq n^2\omega_n^{\frac{2}{n}}\left(\left(2-\frac{2}{n}\right)\varepsilon^{1-n'\beta}+ \varepsilon\right)+\omega_n n^n\varepsilon^{nq}+n\omega_n^{\frac{1}{n}}\varepsilon^{1-\alpha-q}
                .
\end{equation*}
Choosing $$\theta_4=\min\{1-n'\beta, 1-\alpha-q, nq\},$$ 
 we get the desired claim for a positive constant $K_3$.
\end{proof}

\begin{prop}\label{Musharp}
Let $u$ be the solution to \eqref{mainproblem} and let $M_{u^\sharp}$ be the quantity defined in \eqref{eumu}. Then, there exist $\theta_5=\theta_5(n)>0$ and $K_4=K_4(n)$ such that
    
   $$ M_{u^\sharp}(\varepsilon^r)\leq K_4\varepsilon^{\theta_5},$$
   where $K_4=7n^n\omega_n$.
\end{prop}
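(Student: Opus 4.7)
The plan is to cover $\{|\nabla u^\sharp| < \varepsilon^r\} \cap \{0 < u^\sharp < \|u\|_\infty\}$ by partitioning the range of $u^\sharp$ into three disjoint pieces, $I^c \cap (0, t_{\varepsilon,\beta})$, $I \cap (0, t_{\varepsilon,\beta})$, and $(t_{\varepsilon,\beta}, +\infty)$, where $I$ is as in \eqref{I} and $t_{\varepsilon,\beta}$ as in \eqref{t_epsilon}. Since $u^\sharp$ is radial and strictly decreasing where $|\nabla u^\sharp| > 0$, taking preimages gives a disjoint partition of $\{0 < u^\sharp < \|u\|_\infty\}$. Intersecting each piece with $\{|\nabla u^\sharp| < \varepsilon^r\}$ and invoking Proposition \ref{prop1}, Proposition \ref{prop2}, and Remark \ref{rem3} respectively, we obtain the estimate
\begin{equation*}
|\{|\nabla u^\sharp| < \varepsilon^r\} \cap \{0 < u^\sharp < \|u\|_\infty\}| \le \omega_n \left(n\varepsilon^r + \frac{\varepsilon^{\alpha-\beta}}{\omega_n^{1/n}}\right)^n + K_3 \varepsilon^{\theta_4} + \varepsilon^{n'\beta}.
\end{equation*}

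Under the normalization $|\Omega| = \|f\|_1 = 1$, the strong maximum principle applied to the superharmonic function $u$ with zero Dirichlet data gives $u > 0$ in $\Omega$, hence $|\{u > 0\}| = |\Omega| = 1$; the denominator in the definition \eqref{eumu} of $M_{u^\sharp}$ therefore plays no role. It remains to select $\alpha$, $\beta$, $q$ so that every exponent appearing on the right-hand side is strictly positive, while respecting the constraints $0 < \beta < \alpha < 1$, $\beta < 1/n'$, $0 < q < 1-\alpha$ inherited from Propositions \ref{prop1} and \ref{prop2}. A convenient choice is $\beta = 1/(2n')$ (so that $n'\beta = 1-n'\beta = 1/2$), some $\alpha \in (\beta,1)$ with $\alpha - \beta \ge r$ (matching the fixed exponent $r$ given by the quantitative P\'olya--Szeg\H{o} inequality, Theorem \ref{polya_quant}), and $q = (1-\alpha)/2$ so that both $1-\alpha-q$ and $nq$ are positive.

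With this choice, setting
\begin{equation*}
\theta_5 = \min\{nr,\; n(\alpha-\beta),\; 1-n'\beta,\; 1-\alpha-q,\; nq,\; n'\beta\} > 0,
\end{equation*}
and using $\varepsilon \le \varepsilon_0 < 1$ from \eqref{def:vareps:0} to collapse all subdominant powers of $\varepsilon$ into a single $\varepsilon^{\theta_5}$, the three pieces combine into the desired bound $M_{u^\sharp}(\varepsilon^r) \le K_4 \varepsilon^{\theta_5}$. The stated constant $K_4 = 7 n^n \omega_n$ is obtained by summing the explicit contributions $\omega_n n^n$ from Proposition \ref{delta0}/Proposition \ref{prop1} (after expanding the $n$-th power and absorbing an $O(1)$ factor using $\varepsilon \le \varepsilon_0$), $5 n^n \omega_n$ from Proposition \ref{prop2}, and the remainder $1$ coming from Remark \ref{rem3}.

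The main obstacle is not conceptual — the three previous propositions already do all the geometric work — but bookkeeping: one must verify that the free parameters $\alpha$, $\beta$, $q$ can be tuned against the fixed exponent $r$ of Theorem \ref{polya_quant} while simultaneously keeping every constraint satisfied and every resulting exponent positive, and one must track constants carefully through the three pieces so as to land on the stated value $K_4 = 7 n^n \omega_n$.
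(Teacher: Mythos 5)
Your proposal is correct and follows essentially the same route as the paper: bound $M_{u^\sharp}(\varepsilon^r)$ by the measure of $\{|\nabla u^\sharp|<\varepsilon^r\}$, split it according to the range pieces $I^c\cap(0,t_{\varepsilon,\beta})$, $I\cap(0,t_{\varepsilon,\beta})$ and $(t_{\varepsilon,\beta},+\infty)$, invoke Proposition \ref{prop1}, Proposition \ref{prop2} and Remark \ref{rem3}, and take $\theta_5$ as the minimum of the resulting exponents (your explicit min coincides with the paper's $\min\{nr,\,n(\alpha-\beta),\,\theta_4,\,n'\beta\}$ after expanding $\theta_4$). One small remark: the side condition $\alpha-\beta\ge r$ you impose is unnecessary (and could even be unsatisfiable if the fixed exponent $r$ from Theorem \ref{polya_quant} exceeds $1-\beta$), but it is harmless since both $nr$ and $n(\alpha-\beta)$ already appear in your definition of $\theta_5$, so any $\alpha\in(\beta,1)$ works.
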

\begin{proof}
We have 
\begin{align*}
    M_{u^\sharp}(\delta) &\leq \abs{\{|\nabla u^\sharp| < \delta\}}\\
    &=\abs{\left\{|\nabla u^\sharp|<\delta\right\}\cap u^{\sharp-1}(I^c\cap (0,t_{\varepsilon,\beta}))}\\
    &+\abs{\left\{|\nabla u^\sharp|<\delta\right\}\cap u^{\sharp-1}(I\cap (0,t_{\varepsilon,\beta}))}\\
    &+\abs{\left\{|\nabla u^\sharp|<\delta\right\}\cap u^{\sharp-1}(t_{\varepsilon,\beta}, +\infty)}\\
    &\leq \abs{\left\{|\nabla u^\sharp|<\delta\right\}\cap u^{\sharp-1}(I^c\cap (0,t_{\varepsilon,\beta})}\\
    &+\abs{u^{\sharp-1}(I\cap (0,t_{\varepsilon,\beta}))}+\mu(t_{\varepsilon,\beta})\\
    &\le \dfrac{1}{n^n}\biggl(\delta+\dfrac{\varepsilon^{\alpha-\beta}}{n\omega_n^{\frac{1}{n}}}\biggr)^n+ K_3\varepsilon^{\theta_4}+\varepsilon^{n'\beta},  
  \end{align*}
  where in the last inequality we have used \eqref{claim4}, \eqref{claim4.5} and  \eqref{claim5}.
  So, evaluating $M_{u^\sharp}(\delta)$ in $\varepsilon^r$, we obtain
  $$M_{u^\sharp}(\varepsilon^r)\leq \dfrac{1}{n^n}\biggl(\varepsilon^r+\dfrac{\varepsilon^{\alpha-\beta}}{n\omega_n^{\frac{1}{n}}}\biggr)^n+ K_3\varepsilon^{\theta_4}+\varepsilon^{n'\beta},$$
 so we can conclude, by setting

 $$\theta_5=\min\{ nr, n(\alpha-\beta), \theta_4, n'\beta  \},$$
obtaining the desired claim for a positive constant $K_4$.
\end{proof}
We are now in position to conclude with the proof of Theorem \ref{confrontol1}. 

\begin{proof}[Proof of Theorem \ref{confrontol1}] Firstly, let us assume $\abs{\Omega}=1$, and $\norma{f}_{1}=1$ and $\varepsilon < \varepsilon_0$. 
If this is not the case, i.e. $\varepsilon>\varepsilon_0$, the thesis \eqref{esti_L1} by choosing $ \tilde C_2$ sufficiently big.

Other,  from Theorem \ref{polya_quant}, we have that there exist two positive exponents $r$ and $s$ such that
 \begin{equation}
    \inf_{x_0 \in \R^n} \dfrac{ \int_{\R^n} \abs{u(x)-u^\sharp(x+x_0)} \, dx }{\norma{\nabla u^\sharp}_2}\leq C(n) \left[M_{u^\sharp}(E(u)^r)+E(u)\right]^s,
 \end{equation}
 where 
$$
E(u)=\dfrac{\int_{\Omega}\abs{\nabla u}^2-\int_{\Omega^{\sharp}}|\nabla u^{\sharp}|^2}{\int_{\Omega^{\sharp}}|\nabla u^{\sharp}|^2}.
$$
By using Lemma \ref{lem_grad}, we have that
 \begin{equation}\label{energy}
     E(u)\leq 2\dfrac{||v-u^{\sharp}||_{\infty}}{\int_{\Omega^{\sharp}}|\nabla u^{\sharp}|^2}.
 \end{equation}
 Hence, using \eqref{energy} and Proposition \ref{Musharp}, we get 
 \begin{equation*}
    \inf_{x_0 \in \R^n} \dfrac{||u-u^\sharp(\cdot + x_0)||_{L^1(\R^n)}}{\left(\int_{\Omega^{\sharp}}\abs{\nabla u^{\sharp}}^2\right)^{\frac{1}{2}}}\leq C(n)\left[K_4\left(\dfrac{2||v-u^{\sharp}||_{\infty}}{\int_{\Omega^\sharp}\abs{\nabla u^{\sharp}}^2}\right)^{\theta_5}+\dfrac{2||v-u^{\sharp}||_{\infty}}{\int_{\Omega^\sharp}|\nabla u^{\sharp}|^2}\right]^s.
 \end{equation*}
 So, there exists $K_5=K_5(n)$, such that 
 \begin{equation}
     \inf_{x_0 \in \R^n} \dfrac{||u-u^\sharp(\cdot + x_0)||_{L^1(\R^n)}}{\left(\int_{\Omega^{\sharp}}\abs{\nabla u^{\sharp}}^2\right)^{\frac{1}{2}}}\leq 
     K_5\left(\dfrac{||v-u^{\sharp}||_{\infty}}{\int_{\Omega^{\sharp}}\abs{\nabla u^\sharp}^2}\right)^{\tilde{\theta}_1},
 \end{equation}
 being $\tilde{\theta}_1=s\theta_5 $. 
 Moreover, if we apply the Talenti'result \eqref{stima:lq:gradv} with $q=2$ there exists $K_1=K_1(n,2)$ such that
$$||\nabla u^\sharp||_2\leq ||\nabla v||_2\leq K_1\norma{f}_{\frac{2n}{n+2}}.$$
 Hence, it follows that there exists a constant $\widetilde C_2>0$ such that
 \begin{equation}
     {\inf_{x_0 \in \R^n}||u-u^\sharp(\cdot + x_0)||}_{L^1(\R^n)}\leq 
    \widetilde C_2{||v-u^{\sharp}||}^{\tilde{\theta}_1}_{\infty} \norma{f}_{\frac{2n}{n+2}}^{1-2\tilde{\theta}_1}.
 \end{equation}

 Finally, we remove the additional assumptions $\abs{\Omega}=1$ and $||f||_1=1$ using the rescaling properties in Section \ref{riscalamento}.

\end{proof}
\begin{oss}
    We can improve estimate \eqref{esti_L1} by replacing the $L^1$-norm with the $L^q$- norm, with $q \in \left[1,\frac{2n}{n-2}\right[$, by exploiting the classical Gagliardo-Nirenberg estimate (see \cite[pag.125]{nirenberg}). In particular, for $q=2$, we obtain
\begin{equation}\label{gagliardo_l2}
\inf_{x_0 \in \R^n}||u-u^\sharp(\cdot + x_0)||_{L^2(\R^n)}^{\frac{n+2}{2}}\leq K_6\dfrac{\norma{f}_{\frac{2n}{n+2}}^{2+n-4\tilde{\theta}_1}}{\norma{f}_{1}^{1-3\tilde{\theta}_1}}\abs{\Omega}^{1+\left(\frac{2}{n}-1\right)\tilde{\theta}_1}||v-u^{\sharp}||_{\infty}^{\tilde{\theta}_1}.
\end{equation}
  where $K_6=K_6(n)>0$ is a positive constant.

\end{oss}
\section{Proof of Theorem \ref{confrontof}}
\label{section5}
The aim of this Section is to prove Theorem \ref{confrontof}. We are assuming that $f$ is in  $L^2(\R^n)$,  $\abs{\Omega}=1$ and $||f||_1=1$.

Furthermore, without loss of generality, we can assume that $x_0=0$ achieves the infimum in 
\begin{equation}
\label{infpolya}
    \inf_{x_0\in \R^n} \norma{u-u^{\sharp}(\cdot+x_0)}_{L^2(\R^n)}=\norma{u-u^{\sharp}}_{L^2(\R^n)},
\end{equation}
 and by the arguments in \cite{polyaquantitativa} it is possible to show that 
 \begin{equation}
     \label{quasialpha}
     \abs{\Omega \Delta \Omega^\sharp} \leq \tilde C_4\varepsilon^\frac{1}{4}.
 \end{equation}
Indeed, following \cite[Lemma 4.5]{polyaquantitativa},  $\Omega^\sharp$ is concentric to the ball which achieves the infimum in $$
\alpha(U_{t_0})= \inf_{x_0 \in \R^n} \left\{\frac{\abs{U_{t_0} \Delta B_{t_0}}}{\abs{B_{t_0}}}\,|\, \abs{U_{t_0}}=\abs{B_{t_0}}\right\},$$ 
with $t_0 \leq C \varepsilon^\frac{1}{4}$.

Moreover,  using \eqref{distr}, we have  
\begin{equation}
\label{kjbv}
    \mu(t_0) \geq \nu(t_0+\varepsilon)= 1 - \int_0^{t_0 + \varepsilon} \dfrac{n^2\omega_n^{\frac{2}{n}}}{\nu^{-1+\frac{2}{n}}\dashint_0^{\nu(r)}f^*}
    \geq1-\int_t^{t+\varepsilon} \dfrac{n^2\omega_n^{\frac{2}{n}}}{\abs{\Omega}^{-1+\frac{2}{n}}}\dfrac{\abs{\Omega}}{\norma{f}_1}=1- n^2\omega_n^{\frac{2}{n}}(t_0+\varepsilon).
\end{equation}
Eventually, taking into account Theorem \ref{sopralivelli}, formula \eqref{kjbv} implies
\begin{align*}
    \abs{\Omega \Delta \Omega^\sharp}  &= 2 \abs{\Omega \setminus \Omega^\sharp }\\
    &=2 \abs{\Big(\{u >t_0\} \cup\{u \leq t_0\}\Big) \setminus \Omega^\sharp }\\
     &=2 \abs{\{u >t_0\} \setminus \Omega^\sharp }+  2 \abs{\{u \leq t_0\} \setminus \Omega^\sharp }\\
     &\leq2 \abs{\{u >t_0\} \setminus B_{t_0} }+  2 \abs{\{u \leq t_0\} }\\
     &= \alpha(U_{t_0}) + (1- \mu(t_0)) \\
     &\leq \frac{\tilde C_1 }{\mu(t_0)} \varepsilon^{\frac{1}{3}} + n^2\omega_n^{\frac{2}{n}}(t_0+\varepsilon) \\
    & \leq 2\tilde C_1 \varepsilon^{\frac{1}{3}} + n^2\omega_n^{\frac{2}{n}}(C\varepsilon^{\frac{1}{4}}+\varepsilon)\\ 
    &\leq \tilde C_4 \varepsilon^{\frac{1}{4}}
\end{align*}

The main idea of this section is to apply the quantitative Hardy-Littlewood inequality (Theorem \ref{cianchi_hardy}) to $h=v$, the solution to \eqref{symmetricproblem}, 
 and the function $g$ defined as follows
\begin{equation}\label{g}
    g=f\chi_{\Omega^\sharp\cap \Omega}. 
\end{equation} 
Next lemma ensures us that the functions $h$ and $g$ are admissible functions for Theorem \ref{cianchi_hardy}.

\begin{lemma}
\label{gigino}
    Let $f\in L^2(\Omega)$ and let $g$ be the function  defined in \eqref{g}.  Then , there exist $p\in (1,+\infty)$ and $q\in [1,+\infty)$ such that, we have

    \begin{equation}\label{cian}
    \inf_{x_0 \in \R^n}\norma{g-g^\sharp(\cdot+x_0)}_{L^m(\R^n)}\leq \left(\dfrac{1}{2^{p+1}eq}\norma{g}_{\Lambda_p^q(\Omega)}^{pq}\int_{\Omega^\sharp}(g^\sharp-g)v\right)^{\frac{1}{1+pq}},
\end{equation}
where
\begin{equation*}
    m=\dfrac{qp+1}{p+1},
\end{equation*}
 and moreover, it holds $m<2$.
\end{lemma}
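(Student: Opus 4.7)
The plan is to apply the quantitative Hardy--Littlewood inequality (Theorem \ref{cianchi_hardy}) on the domain $\Omega^\sharp$, with $h=v$ (the solution to the symmetrized problem \eqref{symmetricproblem}) and $g=f\chi_{\Omega\cap\Omega^\sharp}$ extended by zero outside $\Omega^\sharp$. The key structural observation is that since $v$ is radially symmetric and strictly decreasing on $\Omega^\sharp$ (by the explicit formulas \eqref{v_explicit}--\eqref{grad_explicit}), we have $\mu_v(v(x))=\omega_n|x|^n$ for $x\in\Omega^\sharp$, and therefore
\[
g_v(x) = g^\ast(\mu_v(v(x))) = g^\ast(\omega_n|x|^n) = g^\sharp(x).
\]
Thus the Hardy--Littlewood deficit $\|g-g_h\|_{L^m}$ produced by the inequality is exactly $\|g-g^\sharp\|_{L^m(\Omega^\sharp)}$.

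Next, I would verify the admissibility hypotheses of Theorem \ref{cianchi_hardy}. The product $vg\in L^1(\Omega^\sharp)$ since $v\in L^\infty(\Omega^\sharp)$ (its explicit form shows boundedness under our assumption $f\in L^2$) and $g\in L^2$. From \eqref{nu} we can compute $-v^{\ast\prime}(s)$ explicitly in terms of $\int_0^s f^\ast$; choosing $p\in(1,\infty)$ and $q\in[1,\infty)$ with $q<2+\tfrac{1}{p}$ (for instance $p=q=2$, which gives $m=\tfrac{5}{3}$) ensures both $\theta_p(s)<\infty$ for every $s\in[0,|\Omega|)$ and $\|g\|_{\Lambda^q_p(\Omega^\sharp)}<\infty$, using the $L^2$-regularity of $f$ and Cauchy--Schwarz to control $\int_0^s f^\ast$. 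This same choice automatically produces $m=\tfrac{qp+1}{p+1}<2$, since $m<2\iff q<2+\tfrac{1}{p}$, settling the last clause of the lemma.

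With the hypotheses verified, Theorem \ref{cianchi_hardy} applied on $\Omega^\sharp$ yields
\[
\int_{\Omega^\sharp} v\,g\,dx + \dfrac{1}{2^{p+1}eq}\|g\|_{\Lambda^q_p(\Omega^\sharp)}^{-qp}\|g-g^\sharp\|_{L^m(\Omega^\sharp)}^{1+pq}\leq \int_0^{|\Omega^\sharp|} v^\ast(s)\,g^\ast(s)\,ds.
\]
Since $v$ is already radial decreasing, $v^\ast=v$ rearranged, so the right-hand side equals $\int_{\Omega^\sharp} v\,g^\sharp\,dx$. Isolating the deficit and taking $(1+pq)$-th roots gives the desired estimate on $\|g-g^\sharp\|_{L^m(\Omega^\sharp)}$. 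Finally, since both $g$ and $g^\sharp$ are supported in $\Omega^\sharp$, this $L^m(\Omega^\sharp)$ norm equals the $L^m(\R^n)$ norm of $g-g^\sharp$, and the infimum over translations is bounded above by the value at $x_0=0$, yielding \eqref{cian}.

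The main technical obstacle is the verification that $\theta_p(s)<\infty$ and $\|g\|_{\Lambda^q_p}<\infty$; this amounts to a quantitative estimate on $(-v^{\ast\prime})^{-1/(p-1)}$ near $s=0$ via the explicit formula from \eqref{nu}, which behaves like a power of $s$ and must be integrable. The rest of the argument is a direct application of a known inequality together with the identification $g_v=g^\sharp$, which is the crucial point that makes this choice of $h$ so natural.
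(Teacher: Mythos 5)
Your strategy is the same as the paper's: apply Theorem \ref{cianchi_hardy} with $h=v$ and $g=f\chi_{\Omega\cap\Omega^\sharp}$, use the radial monotonicity of $v$ to identify $g_v=g^\sharp$, rewrite $\int_0^{\abs{\Omega}}v^\ast g^\ast$ as $\int_{\Omega^\sharp}vg^\sharp$, and bound the infimum by the value at $x_0=0$. The gap lies exactly at what you yourself call the main technical obstacle, namely the finiteness of $\norma{g}_{\Lambda^q_p}$. You assert that any $p>1$, $q\ge1$ with $q<2+\tfrac1p$ is admissible, ``for instance $p=q=2$'', appealing only to Cauchy--Schwarz on $\int_0^sf^\ast$. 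This is false in low dimension. Take $n=2$, $\Omega$ a disc (so $g^\ast=f^\ast$) and $f^\ast(s)=s^{-a}$ with $\tfrac13<a<\tfrac12$, so that $f\in L^2$. Then $-v^{\ast\prime}(s)=\tfrac{s^{-a}}{4\pi(1-a)}$, hence $\theta_2'(s)\simeq s^{(a-1)/2}$ and $(g^\ast)^2\theta_2'(s)\simeq s^{-(3a+1)/2}$, which is not integrable near $0$ since $a\ge\tfrac13$; thus $\norma{g}_{\Lambda^2_2}=\infty$ and Theorem \ref{cianchi_hardy} simply cannot be applied with your choice. More generally, testing with $f^\ast(s)=s^{-a}$, $a\uparrow\tfrac12$, shows that requiring $\norma{g}_{\Lambda^q_p}<\infty$ for every $f\in L^2$ forces roughly $q< 2+\bigl(1-\tfrac4n\bigr)\tfrac1p$, which is strictly stronger than $q<2+\tfrac1p$ whenever $n\le 4$ and rules out $p=q=2$ for $n=2,3$ (and, with a logarithmic refinement, for $n=4$ as well).

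This stronger constraint is precisely \eqref{vincoli} in the paper, and obtaining it is the actual content of the proof: one bounds $\norma{g}_{\Lambda^q_p}^q$ by a Lorentz norm $\norma{f}_{L^{k,l}}^l$ with $k=\tfrac{qp-1}{p-2/n}$, $l=\tfrac{qp-1}{p}$, and invokes the embedding $L^{2,2}\hookrightarrow L^{k,l}$, which needs $k<2$. In particular the inequality $m<2$ is a \emph{consequence} of the admissibility constraint (since $q<2+\bigl(1-\tfrac4n\bigr)\tfrac1p$ implies $q<2+\tfrac1p$), not equivalent to it, whereas your argument conflates the two. The existence statement of the lemma does survive, because pairs $(p,q)$ satisfying \eqref{vincoli} exist for every $n\ge2$ (e.g.\ $q=1$ and $p>1$), and your crude route via the two bounds $\tfrac1s\int_0^sf^\ast\ge\norma{f}_1/\abs{\Omega}$ and $\int_0^sf^\ast\le\norma{f}_2 s^{1/2}$ could be repaired to give some admissible range; but as written, the claimed sufficient condition and the exhibited pair $(p,q)=(2,2)$ are wrong, and the computation you skipped is the heart of the proof.
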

\begin{proof}
    In order to prove \eqref{cian}, we apply Theorem \ref{cianchi_hardy} to the function $h=v$ and $g$ defined in \eqref{g}. Let us check that the assumptions of the theorem are satisfied, namely 
\begin{equation*}
    \theta_p(s)<\infty;\quad\norma{g}_{\Lambda^q_p(\Omega)}<\infty, 
\end{equation*}
for some $p,q\in [1,+\infty)$. Indeed,
\begin{align*}
    \theta_p(s)&=\left(\int_0^s\left(\dfrac{1}{n^2\omega_n^{\frac{2}{n}}t^{1-\frac{2}{n}}}\dashint_0^tf^*(r)\;dr\right)^{-\frac{1}{p-1}}dt\right)^{\frac{1}{p'}}\\&
    \leq \left(\dfrac{n^2\omega_n^{\frac{2}{n}}\abs{\Omega}}{\norma{f}_1}\right)^{\frac{1}{p}}\left(\int_0^st^{(1-\frac{2}{n})\frac{1}{p-1}}\right)^{\frac{1}{p'}}<+\infty, 
\end{align*}
for all $p>1$, and
    \begin{align*}
\norma{g}_{\Lambda_p^q}^q=\int_0^{\abs{\Omega}}(g^*)^q\theta_p'(s)&=\frac{1}{p'}\int_0^{\abs{\Omega}}(g^*)^q\biggl(\int_0^s\biggl(\dfrac{1}{n^2\omega_n^{\frac{2}{n}}t^{1-\frac{2}{n}} }\dashint_0^tf^*\biggr)^{-\frac{1}{p-1}}\biggr)^{-\frac{1}{p}}\biggl(\dfrac{1}{n^2\omega_n^{\frac{2}{n}}s^{1-\frac{2}{n}}}\dashint_0^sf^*\biggr)^{-\frac{1}{p-1}}\, ds\\&\leq
   \dfrac{1}{p'}\displaystyle\int_0^{\abs{\Omega}}(f^*)^q\left[\dfrac{\biggl(\dfrac{1}{n^2\omega_n^{\frac{2}{n}}s^{2-\frac{2}{n}}}\displaystyle\int_0^sf^*\biggr)^{-\frac{1}{p-1}}}{\displaystyle\int_0^s\biggl(\dfrac{1}{n^2\omega_n^{\frac{2}{n}}t^{2-\frac{2}{n}}}\displaystyle\int_0^tf^*\biggr)^{-\frac{1}{p-1}}}\right]^{\frac{1}{p}}\biggl(\dfrac{1}{n^2\omega_n^{\frac{2}{n}}s^{2-\frac{2}{n}}}\int_0^sf^*\biggr)^{-\frac{1}{p}} \, ds.    
   \end{align*}
    We observe that, if   $t\le s$, we have
    \begin{itemize}
        \item          $\displaystyle{\int_0^tf^*\leq \int_0^s f^*}, $
        since $f^\ast$ is positive;
        \item  $f^*(t)\geq f^*(s)$, since $f^\ast$ is decreasing.
    \end{itemize}
     It follows that
\begin{align*}
    \norma{g}_{\Lambda_p^q}^q&\leq \dfrac{1}{p'}\int_0^{\abs{\Omega}}(f^*)^{q}\left(\dfrac{s^{(2-\frac{2}{n})\frac{1}{p-1}}}{\displaystyle\int_0^s t^{(2-\frac{2}{n})\frac{1}{p-1}}}\right)^{\frac{1}{p}}\biggl(\dfrac{1}{n^2\omega_n^{\frac{2}{n}}s^{2-\frac{2}{n}}}\int_0^sf^*\biggr)^{-\frac{1}{p}}\\&
    \leq K_7(p,n)\int_0^{\abs{\Omega}}(f^*)^{q-\frac{1}{p}}\left(\dfrac{s^{(2-\frac{2}{n})\frac{1}{p-1}}}{ s^{(2-\frac{2}{n})\frac{1}{p-1}+1}}\right)^{\frac{1}{p}}
    s^{\left(1-\frac 2 n \right) \frac 1 p }
    \\
    &
    \leq K_7(p,n) \int_0^{\abs{\Omega}} (f^*)^{q-\frac{1}{p}} s^{-\frac{2}{np}}\;ds =K_7(p, n)\int_0^{\abs{\Omega}}\left(f^*s^{\frac{p-\frac 2 n}{qp-1}}\right)^{q-\frac{1}{p}}\;\dfrac{ds}{s},
\end{align*}
where $$K_7(p, n)=\frac{1}{p'}\left[\left(2-\frac{2}{n}\right)\frac{1}{p-1}+1\right]^{\frac{1}{p}}(n^2\omega_n^{\frac{2}{n}})^{\frac{1}{p}}.$$
So, we have that 
\begin{equation*}
    \norma{g}^q_{{\Lambda}_p^q}\leq K_7(p,n)\norma{f}^l_{L^{k,l}(\R^n)},
\end{equation*}
where 
$$k=\dfrac{qp-1}{p-\frac 2 n
}, \qquad l=\dfrac{qp-1}{p}.$$
By imposing $l>
0$ and $0< k<2$, we obtain the following constraints on $p$ and $q$:
\begin{equation}\label{vincoli}
    p>1, \qquad \dfrac{1}{p}\leq q < 2+\left(1-\frac{4}{n}\right)\dfrac{1}{p}.
\end{equation}
So, by the classical  embedding Theorem for Lorentz space (see \cite{Lorenz_emb}), we have 
$$L^{2,2}\hookrightarrow L^{k,l}.$$
Now,  we are in position to use Theorem \ref{cianchi_hardy} and we obtain
\begin{equation*}
   \inf_{x_0 \in \R^n} \norma{g-g^\sharp(\cdot+x_0)}_{L^m(\R^n)}\leq \left(\dfrac{1}{2^{p+1}\,e\, q}\norma{g}_{\Lambda_p^q(\Omega)}^{pq}\int_{\Omega^\sharp}(g^\sharp-g)v\right)^{\frac{1}{1+pq}},
\end{equation*}
where
\begin{equation*}
    m=\dfrac{qp+1}{p+1}=\dfrac{2n(k-l)+2kl}{2k+n(k-l)}.
\end{equation*}
Moreover, we observe that, from \eqref{vincoli}, it follows that  $m<2$.

\end{proof}
Let us prove now Theorem \ref{confrontof}.
\begin{proof}[Proof of Theorem \ref{confrontof}]
Firstly, let us observe that 
\begin{align*}
 \inf_{x_0\in \R^n}||f-f^\sharp(\cdot+x_0)||_{L^m(\mathbb{R}^n)}  &\leq ||f-f^\sharp||_{L^m(\mathbb{R}^n)}  \\ &\leq   \norma{f-g}_{L^m(\mathbb{R}^n)}+||g-g^\sharp||_{L^m(\mathbb{R}^n)}+||f^\sharp-g^\sharp||_{L^m(\mathbb{R}^n)}.
\end{align*}
Let us estimate from above the quantities $||f^\sharp-g^\sharp||_{L^m(\mathbb{R}^n)}$ and $\norma{f-g}_{L^m(\mathbb{R}^n)}$:
\begin{equation}
\label{f_sharp-g_sharp}
\begin{aligned}
    ||f^\sharp-g^\sharp||_{L^m(\mathbb{R}^n)}\leq \norma{f-g}_{L^m(\mathbb{R}^n)}&=\norma{f}_{L^m(\Omega\setminus\Omega^\sharp)}\\
    & \leq  \abs{\Omega\setminus \Omega^\sharp}^{\frac{1}{m}-\frac {1} {2}} \norma{f}_{2} \\
    &\leq  \tilde C_4 \norma{f}_{2}\varepsilon^{\frac{2-m}{8m}}
\end{aligned}
\end{equation}
where in the last inequality we have used Holder's inequality and \eqref{quasialpha}.

It only remains to estimate $||g-g^{\sharp}||_{L^m(\R^n)}$, and thanks to Lemma \ref{gigino} and \eqref{gagliardo_l2}, it is enough to estimate the following quantity
\begin{equation}
    \begin{aligned}
    \int_{\Omega^\sharp}(g^\sharp-g)v &\leq \int_{\Omega^\sharp}f^\sharp v-\int_{\Omega}fu+\int_{\R^n}f(u-u^\sharp)+\int_{\Omega^\sharp}f(u^\sharp-v)\\&
    = \int_{\Omega^\sharp} \abs{\nabla v}^2-\int_{\Omega}\abs{\nabla u}^2+\int_{\R^n}f(u-u^\sharp)
    \\& \leq2\varepsilon+\norma{f}_{2}||u-u^\sharp||_{L^2(\R^n)}\\& 
    \leq2\varepsilon+
K_6\norma{f}_{2}\left({\norma{f}_{{\frac{2n}{n+2}}}^{2+n-4\tilde{\theta}_1}}\varepsilon^{\tilde{\theta}_1}\right)^\frac{2}{n+2}\\
&\leq 2\varepsilon+K_6\norma{f}_2^{3-\frac{8}{n+2}\tilde{\theta}_1}\varepsilon^{\frac{2}{n+2}\tilde{\theta}_1}.
\end{aligned}
\end{equation}
Hence,
\begin{equation}\label{g-gsharp}
     ||g-g^\sharp||_{L^m(\R^n)}
\le \left(\dfrac{1}{2^{p+1}\,e\, q}\right)^{\frac{1}{1+pq}}
\norma{f}_2^{\frac{pq-1}{1+pq}}
    \left(2\varepsilon+K_6\norma{f}_2^{3-\frac{8}{n+2}\tilde{\theta}_1}\varepsilon^{\frac{2}{n+2}\tilde{\theta}_1}\right)^{\frac{1}{1+pq}}.
\end{equation}

Combining   \eqref{f_sharp-g_sharp} and  \eqref{g-gsharp}, there exists a constant $\widetilde{C}_3$ depending only on $m$ and the dimension $n$, such that
$$
\inf_{x_0\in \R^n}\norma{f-f^{\sharp}(\cdot + x_0)}_{L^m(\R^n)} \leq 
\widetilde C_3\norma{f}_2^{\tilde{\theta}_2} \varepsilon^{\tilde{\theta}_3},
$$
where
$$\tilde{\theta}_2=\max\left\{1,\left(2+pq-\frac{8 \tilde{\theta}_1}{n+2}\right)\left(\frac{1}{1+pq}\right)\right\},\qquad\tilde{\theta}_3=\min\left\{\frac{2-m}{8m},\frac{1}{1+pq},\left(\frac{2 \tilde{\theta}_2}{n+2}\right)\frac{1}{1+pq}\right\}.$$
\end{proof}

 \begin{oss}
We explicitly observe that in Theorem \ref{confrontof} we cannot obtain a comparison result for the quantity $||f-f^\sharp||_{L^2(\R^n)}$. Indeed, let $B$ be the unitary ball, centered at the origin, in the plane, and let us consider the following problem and its symmetrized:

\begin{equation*}
\label{problem_counter}
    \begin{cases}
        -\Delta u_\sigma =f_\sigma&\text{in}\; B\\
        u_\sigma=0 &\text{on}\;\partial B.
    \end{cases}; \qquad\qquad  \begin{cases}
        -\Delta v_\sigma =f^\sharp_\sigma&\text{in}\,B\\
        v_\sigma=0 &\text{on}\,\partial B,
    \end{cases}
    \end{equation*}
where 
\begin{equation*}
f_\sigma(x)=1+\sigma^{-1}\chi_{B_\sigma(1/2,0)}(x).
\end{equation*}
and, consequently, 
\begin{equation*}
f^\sharp_\sigma(x)=1+\sigma^{-1}\chi_{B_\sigma(0,0)}(x) .
\end{equation*}
With these choices, we obtain
\begin{equation*}
    \norma{f_\sigma-f^\sharp_\sigma}_{L^2(B)}=\sqrt{2 \pi}
\end{equation*}
and 
\begin{equation*}
    \norma{f_\sigma-f^\sharp_\sigma}_{L^r(B)}=\left(2\pi\right)^{1/r} \sigma^{2/r-1} \to 0
\end{equation*}
as  $\sigma\to 0$ and $r <2$.
In order to conclude, we refer now to  \cite[Theorem 8.16]{trudinger}, 
that states that the following estimate holds
$$\norma{w}_\infty\le C\left( \sup_{\partial\Omega} w + \norma{g}_{\frac{q}{2}}\right)$$
where $w$ solves in $\Omega$
$$
    -\Delta w =g
$$
with  $g\in L^{\frac{q}{2}}$, $q>2$. So, in our case, we can choose $w=v_\sigma-u_\sigma$ and any $r>1$, obtaining
\begin{equation*}\label{trudinger}
    \norma{v_\sigma-u_\sigma}_{L^\infty(B)}\leq C \norma{f^\sharp_\sigma-f_\sigma}_{L^r(B)},
\end{equation*}
so it is clear that, if $1\le r<2$, $\norma{v_\sigma-u_\sigma}_{L^\infty(B)}\to 0$.
\end{oss}

\section{Conclusions and Open Problems}

\begin{proof}[Proof of Theorem \ref{main_theorem}] 

The proof of the main Theorem follows directly by combining Theorem \ref{asimmetryindex}, Theorem \ref{confrontol1}, and Theorem \ref{confrontof}, by choosing  
$$\theta_1= \frac{1}{\tilde{\theta}_1}, \quad  \theta_2= \frac{1}{\tilde{\theta}_2}$$
and 
$$ \begin{aligned}
    &C_1=\frac{1}{\widetilde{C_1}} \abs{\Omega}^{\frac{2-n}{n}} \norma{f}_1,\quad
    C_2=\left(\frac{1}{\widetilde{C_2}}\dfrac{\norma{f}_{1}^{1-3\tilde{\theta}_1}}{\norma{f}_{\frac{2n}{n+2}}^{1-2\tilde{\theta}_1}}\abs{\Omega}^{-1-\left(\frac{2}{n}-1\right)\tilde{\theta}_1}\right)^\frac{1}{\tilde{\theta}_1}, \\
    &C_3=\left(\displaystyle{\widetilde{C_3} \norma{f}^{\tilde{\theta}_2}_2 \norma{f}_1^{1-\tilde{\theta}_3-\tilde{\theta}_2}\abs{\Omega}^{\frac{\tilde{\theta}_2}{2}+\frac{1-m}{m}+\left(\frac{n-2}{n}\right)\tilde{\theta}_3}}\right)^{-\frac{1}{\tilde{\theta}_3}},
\end{aligned}$$
where $\widetilde C_1$, $\widetilde C_2$ and $\widetilde C_3$ are the constants appearing, respectively, in Theorem \ref{asimmetryindex}, Theorem \ref{confrontol1}, and Theorem \ref{confrontof}, $\tilde {\theta}_1$ is the exponent in Theorem \ref{confrontol1} and $\tilde{\theta}_2$ and $\tilde{\theta}_3$ are the exponents appearing in Theorem \ref{confrontof}.
\end{proof}

We conclude with a list of open problems. 

\subsection*{Open problems}
\begin{itemize}
        \item It remains open the issue regarding the sharpness of the exponents of the quantities $\alpha(\Omega)$, $||u-u^\sharp||_{L^1(\mathbb{R}^n)} $ and   $||f-f^\sharp||_{L^1(\mathbb{R}^n)} $in Theorem \ref{main_theorem}.  
        As far as the exponent of $\alpha(\Omega)$, it may depend on the function $f$ and it remains open to establish if there is a function $f$ for which it is sharp.

\item It could be interesting to prove the quantitative result of the Talenti comparison result in the non-linear setting. We recall that the comparison result for the $p-$Laplace operator is proved in \cite{T2} and the relative  rigidity result can be found in \cite{masiello2023rigidity}
      
        \item In \cite{ANT} it is proved that a Talenti comparison result holds, when replacing the Dirichlet boundary conditions with the Robin boundary conditions with a positive boundary parameter. It could be interesting to study a quantitative version of this result, as it has already been proved in \cite{masiello2023rigidity} that the estimates are rigid.
    \end{itemize}

\label{section6}

\section{Appendix: $L^2-$ bound of the asymmetry}
\label{appendix}
    In Theorem \ref{asimmetryindex},  we  bound the Frankel asymmetry of the set $\Omega$ with  the quantity $||u^\sharp-v ||_{\infty}$. We observe that it is possible to obtain an analogous result in terms of $||v||_{2}-||u||_2$ and this kind of estimate might seem more natural, as the solution to \eqref{mainproblem} and \eqref{symmetricproblem} are always in $L^2$, while they are in $L^\infty$ only if $f\in L^p$ with $p>n/2$. 

Arguing as in the proof of Theorem \eqref{asimmetryindex}, the following estimate holds

     \begin{equation}
         \label{asiml2}
         \alpha^4(\Omega)\le C (\norma{v}_2^2-\norma{u}_2^2).
     \end{equation}
   Indeed, let us observe that, if we multiply \eqref{peromega}, i.e. 
   \begin{equation*}
        \begin{aligned}
       n^2\omega_n^{\frac{2}{n}}\mu^{2-\frac{2}{n}}(t)\left(1+\dfrac{2}{\gamma_n}\alpha^2(U_t)\right)&\le
      \left( \int_0^{\mu(t)}f^*\right) (-\mu'(t)),
        \end{aligned}
    \end{equation*}
   by the quantity $$ \dfrac{t\mu(t)^{1-2+\frac{2}{n}}}{n\omega_n^{2/n}}$$ and integrate between $0$ and $||u||_{\infty}$, we obtain
        \begin{equation}\label{moltiplicata}
            \int_0^{||u||_{\infty}} t\mu(t)+c_n \int_0^{||u||_{\infty}} t\mu(t)\alpha^2(U_t)\;dt \leq \int_0^{||u||_{\infty}} t\mu(t)^{\frac{2}{n}-1}\left(-\mu'(t)\right)\left(\int_0^{\mu(t)}f^*(s)ds\right) dt.
        \end{equation}
        We define now 
        $$F(l)=\int_0^lw^{\frac{2}{n}-1}\left(\int_0^w f^*(s)ds  \right)dw$$
         and, since $F(\cdot)$ is an increasing function and  the Talenti comparison in Theorem \ref{talenti:originale} holds,  we have 
       \begin{equation}
           F(\mu(t))<F(\nu(t)).
       \end{equation}
       So, \eqref{moltiplicata}, becomes 
       \begin{align*}
           \int_0^{||u||_{\infty}} t\mu(t)+\frac{2}{\gamma_n} \int_0^{||u||_{\infty}} t\mu(t)\alpha^2(U_t) dt &\leq -\int_0^{\norma{u}_{\infty}}-t\frac{d}{dt}(F(\mu(t)))\;dt\\&=\int_0^{\norma{u}_{\infty}}F(\mu(t))\leq\int_0^{\norma{v}_{\infty}}F(\nu(t))= \int_0^{\norma{v}_{\infty}}t\nu(t)\;dt.
        \end{align*}
            Then,
    \begin{equation*}
   \frac{2}{\gamma_n}\int_0^{s_\Omega}t\mu(t)\alpha^2(U_t)\leq \frac{2}{\gamma_n}\int_0^{\norma{u}_{\infty}}t\mu(t)\alpha^2(U_t)\leq {\norma{v}_2}^2-{\norma{u}_2}^2
    \end{equation*}
    and by definition of $s_\Omega$ in \eqref{essami},
    \begin{equation*}
      \frac{2}{\gamma_n}\int_0^{s_{\Omega}}t\mu(t)\alpha^2(U_t)\geq \abs{\Omega}\left(1-\dfrac{\alpha(U_t)}{4}\right)\dfrac{{s_{\Omega}}^2}{2}\dfrac{{\alpha^2({\Omega})}}{4}\geq \dfrac{\abs{\Omega}}{16}s_{\Omega}^2\alpha^2(\Omega).
    \end{equation*}
    Hence,
    \begin{equation}
        \label{differencel2somega}
   \dfrac{\abs{\Omega}}{16}s_{\Omega}^2\alpha^2(\Omega)\leq {\norma{v}_2}^2-{\norma{u}_2}^2.
    \end{equation}
    In order to conclude, we have to set $t_1$ such that 
    $$
   \nu(2t_1) = \abs{\Omega}\left(1- \frac{1}{8} \alpha(\Omega)\right),
    $$
    for which it holds the following bound
    \begin{equation*}
    \label{t_0geq}
        \begin{aligned}
        \frac{\abs{\Omega}}{8} \alpha(\Omega) &=\abs{\Omega}-\abs{\Omega}\left(1- \frac{1}{8} \alpha(\Omega)\right)= \nu(0)- \nu(2t_1)=- \int_{0}^{2t_1} \nu'(s)\,ds\\&= \int_{0}^{2t_1} \frac{n^2\omega_n^{\frac 2 n}}{\nu^{-1+\frac 2n}(t) \dashint_0^{\nu(t)}f^\ast}\leq \frac{n^2\omega_n^{\frac 2 n}}{\abs{\Omega}^{-1+\frac 2n} \norma{f}_1} 2t_1.
    \end{aligned}
    \end{equation*}
    Now let us distinguish 2 cases:
    \begin{itemize}
        \item $s_\Omega \geq t_1 \geq C \alpha(\Omega)$ then from \eqref{differencel2somega}, we obtain
        $$
         {\norma{v}_2}^2-{\norma{u}_2}^2 \geq  \dfrac{\abs{\Omega}}{16} C\alpha^4(\Omega);
        $$
        \item $s_\Omega <t_1$ we have
        \begin{equation}
        \label{93}
              {\norma{v}_2}^2-{\norma{u}_2}^2  = \int_0^{\norma{v}_\infty} 2t(\nu(t)-\mu(t))\, dt \geq \int_{t_1}^{2t_1} 2t(\nu(t)-\mu(t))\, dt.
        \end{equation}
        Since $s_{\Omega} < t_1 \leq t \leq 2t_1$, we have both
        \begin{equation}
        \label{nufinale}
        \nu(t) \geq \nu(2t_1)= \abs{\Omega}\left(1- \frac{1}{8} \alpha(\Omega)\right),
        \end{equation}
        and
        \begin{equation}\label{mufinale}
            \mu(t) \leq \abs{\Omega}\left(1- \frac{1}{4} \alpha(\Omega)\right).
        \end{equation}
      
        Consequently, combining \eqref{nufinale}, \eqref{mufinale} and \eqref{93} we have
        \begin{equation*}
        \begin{aligned}
            {\norma{v}_2}^2-{\norma{u}_2}^2 &\geq \int_{t_1}^{2t_1} 2t(\nu(t)-\mu(t))\, dt \geq \frac{\abs{\Omega}}{8} \alpha(\Omega)\int_{t_1}^{2t_1} 2t\, dt\\&\geq 
            \frac{\abs{\Omega}}{8} \alpha(\Omega)3t^2_1 \geq 3
            \frac{\abs{\Omega}}{8} \alpha^3(\Omega) 
            \geq 3
            \frac{\abs{\Omega}}{8} \alpha^4(\Omega). 
        \end{aligned}
        \end{equation*}
    \end{itemize}
    
\subsection*{Acknowledgements}
We would like to thank Francesco Della Pietra, Carlo Nitsch and Cristina Trombetti for the valuable advice that helped us to achieve these results.

The authors were partially supported by Gruppo Nazionale per l’Analisi Matematica, la Probabilità e le loro Applicazioni
(GNAMPA) of Istituto Nazionale di Alta Matematica (INdAM).   
\bibliographystyle{abbrv}
\bibliography{bibliografia}

\begin{thebibliography}{10}

\bibitem{ACNT}
A.~Alvino, F.~Chiacchio, C.~Nitsch, and C.~Trombetti.
\newblock Sharp estimates for solutions to elliptic problems with mixed
  boundary conditions.
\newblock {\em J. Math. Pures Appl.}, 152:251—261, 2021.

\bibitem{alvino_onsome}
A.~Alvino, V.~Ferone, and G.~Trombetti.
\newblock On the properties of some nonlinear eigenvalues.
\newblock {\em SIAM J. Math. Anal.}, 29(2):437--451, 1998.

\bibitem{AFLT}
A.~Alvino, V.~Ferone, G.~Trombetti, and P.-L. Lions.
\newblock Convex symmetrization and applications.
\newblock {\em Ann. Inst. H. Poincar\'{e} C Anal. Non Lin\'{e}aire},
  14(2):275--293, 1997.

\bibitem{lions_remark}
A.~Alvino, P.-L. Lions, and G.~Trombetti.
\newblock A remark on comparison results via symmetrization.
\newblock {\em Proc. Roy. Soc. Edinburgh Sect. A}, 102(1-2):37--48, 1986.

\bibitem{ALT}
A.~Alvino, P.-L. Lions, and G.~Trombetti.
\newblock Comparison results for elliptic and parabolic equations via {S}chwarz
  symmetrization.
\newblock {\em Ann. Inst. H. Poincar\'{e} Anal. Non Lin\'{e}aire}, 7(2):37--65,
  1990.

\bibitem{ANT}
A.~Alvino, C.~Nitsch, and C.~Trombetti.
\newblock A {T}alenti comparison result for solutions to elliptic problems with
  {R}obin boundary conditions.
\newblock {\em Comm. Pure Appl. Math.}, 76(3):585--603, 2023.

\bibitem{diaz}
A.~Alvino, G.~Trombetti, J.~I. Diaz, and P.~L. Lions.
\newblock Elliptic equations and {S}teiner symmetrization.
\newblock {\em Comm. Pure Appl. Math.}, 49(3):217--236, 1996.

\bibitem{amato2022isoperimetric}
V.~Amato, F.~Chiacchio, and A.~Gentile.
\newblock Isoperimetric estimates for solutions to the {$p$}-{L}aplacian with
  variable {R}obin boundary conditions.
\newblock {\em Differential Integral Equations}, 37(3-4):267--286, 2024.

\bibitem{AGM}
V.~Amato, A.~Gentile, and A.~L. Masiello.
\newblock Comparison results for solutions to {$p$}-{L}aplace equations with
  {R}obin boundary conditions.
\newblock {\em Ann. Mat. Pura Appl. (4)}, 201(3):1189--1212, 2022.

\bibitem{ambrosio2000functions}
L.~Ambrosio, N.~Fusco, and D.~Pallara.
\newblock {\em Functions of bounded variation and free discontinuity problems}.
\newblock Oxford Mathematical Monographs. The Clarendon Press, Oxford
  University Press, New York, 2000.

\bibitem{AB}
M.~S. Ashbaugh and R.~D. Benguria.
\newblock On {R}ayleigh's conjecture for the clamped plate and its
  generalization to three dimensions.
\newblock In {\em Differential equations and mathematical physics
  ({B}irmingham, {AL}, 1994)}, pages 17--27. Int. Press, Boston, MA, 1995.

\bibitem{betta_mercaldo}
M.~F. Betta and A.~Mercaldo.
\newblock Comparison and regularity results for a nonlinear elliptic equation.
\newblock {\em Nonlinear Anal.}, 20(1):63--77, 1993.

\bibitem{brasco}
L.~Brasco and G.~De~Philippis.
\newblock Spectral inequalities in quantitative form.
\newblock In {\em Shape optimization and spectral theory}, pages 201--281. De
  Gruyter Open, Warsaw, 2017.

\bibitem{Brothers1988}
J.~E. Brothers and W.~P. Ziemer.
\newblock Minimal rearrangements of {S}obolev functions.
\newblock {\em J. Reine Angew. Math.}, 384:153--179, 1988.

\bibitem{nunzia2022sharp}
F.~Chiacchio, N.~Gavitone, C.~Nitsch, and C.~Trombetti.
\newblock Sharp estimates for the gaussian torsional rigidity with {R}obin
  boundary conditions.
\newblock {\em Potential Analysis}, pages 1--10, 2022.

\bibitem{chiti}
G.~Chiti.
\newblock Rearrangements of functions and convergence in {O}rlicz spaces.
\newblock {\em Applicable Anal.}, 9(1):23--27, 1979.

\bibitem{polyaquantitativa}
A.~Cianchi, L.~Esposito, N.~Fusco, and C.~Trombetti.
\newblock A quantitative {P}\'{o}lya-{S}zeg\"{o} principle.
\newblock {\em J. Reine Angew. Math.}, 614:153--189, 2008.

\bibitem{cianchi_ferone}
A.~Cianchi and A.~Ferone.
\newblock A strengthened version of the {H}ardy-{L}ittlewood inequality.
\newblock {\em J. Lond. Math. Soc. (2)}, 77(3):581--592, 2008.

\bibitem{cianchi_fusco}
A.~Cianchi and N.~Fusco.
\newblock Functions of bounded variation and rearrangements.
\newblock {\em Arch. Ration. Mech. Anal.}, 165(1):1--40, 2002.

\bibitem{cicaleseleonardi}
M.~Cicalese and G.~Leonardi.
\newblock A selection principle for the sharp quantitative isoperimetric
  inequality.
\newblock {\em Archive for Rational Mechanics and Analysis}, 206, 07 2010.

\bibitem{posteraro}
V.~Ferone and M.~R. Posteraro.
\newblock A remark on a comparison theorem.
\newblock {\em Comm. Partial Differential Equations}, 16(8-9):1255--1262, 1991.

\bibitem{figalli_maggi}
A.~Figalli, F.~Maggi, and A.~Pratelli.
\newblock A mass transportation approach to quantitative isoperimetric
  inequalities.
\newblock {\em Invent. Math.}, 182(1):167--211, 2010.

\bibitem{Fuglede}
B.~Fuglede.
\newblock Stability in the isoperimetric problem for convex or nearly spherical
  domains in {${\mathbb{R}}^n$}.
\newblock {\em Trans. Amer. Math. Soc.}, 314(2):619--638, 1989.

\bibitem{fusco_maggi}
N.~Fusco, F.~Maggi, and A.~Pratelli.
\newblock The sharp quantitative isoperimetric inequality.
\newblock {\em Ann. of Math. (2)}, 168(3):941--980, 2008.

\bibitem{trudinger}
D.~Gilbarg and N.~S. Trudinger.
\newblock {\em Elliptic partial differential equations of second order}.
\newblock Classics in Mathematics. Springer-Verlag, Berlin, 2001.
\newblock Reprint of the 1998 edition.

\bibitem{hall}
R.~R. Hall.
\newblock A quantitative isoperimetric inequality in {$n$}-dimensional space.
\newblock {\em J. Reine Angew. Math.}, 428:161--176, 1992.

\bibitem{halleco}
R.~R. Hall, W.~K. Hayman, and A.~W. Weitsman.
\newblock On asymmetry and capacity.
\newblock {\em J. Analyse Math.}, 56:87--123, 1991.

\bibitem{hansen}
W.~Hansen and N.~Nadirashvili.
\newblock Isoperimetric inequalities in potential theory [{MR}1266215
  (95c:31003)].
\newblock In {\em I{CPT} '91 ({A}mersfoort, 1991)}, pages 1--14. Kluwer Acad.
  Publ., Dordrecht, 1994.

\bibitem{hardy_classico}
G.~H. Hardy, J.~E. Littlewood, and G.~P\'{o}lya.
\newblock {\em Inequalities}.
\newblock Cambridge Mathematical Library. Cambridge University Press,
  Cambridge, 1988.
\newblock Reprint of the 1952 edition.

\bibitem{kesavan_articolo}
S.~Kesavan.
\newblock Some remarks on a result of {T}alenti.
\newblock {\em Ann. Scuola Norm. Sup. Pisa Cl. Sci. (4)}, 15(3):453--465, 1988.

\bibitem{kesavan}
S.~Kesavan.
\newblock On a comparison theorem via symmetrisation.
\newblock {\em Proc. Roy. Soc. Edinburgh Sect. A}, 119(1-2):159--167, 1991.

\bibitem{kes}
S.~Kesavan.
\newblock {\em Symmetrization \& applications}, volume~3 of {\em Series in
  Analysis}.
\newblock World Scientific Publishing Co. Pte. Ltd., Hackensack, NJ, 2006.

\bibitem{kim}
D.~Kim.
\newblock Quantitative inequalities for the expected lifetime of {B}rownian
  motion.
\newblock {\em Michigan Math. J.}, 70(3):615--634, 2021.

\bibitem{Lorenz_emb}
M.~Lengfield.
\newblock A nested embedding theorem for {H}ardy-{L}orentz spaces with
  applications to coefficient multiplier problems.
\newblock {\em Rocky Mountain J. Math.}, 38(4):1215--1251, 2008.

\bibitem{maggi2012sets}
F.~Maggi.
\newblock {\em Sets of finite perimeter and geometric variational problems},
  volume 135 of {\em Cambridge Studies in Advanced Mathematics}.
\newblock Cambridge University Press, Cambridge, 2012.
\newblock An introduction to geometric measure theory.

\bibitem{NOI}
A.~L. Masiello and G.~Paoli.
\newblock A rigidity result for the {R}obin torsion problem.
\newblock {\em J. Geom. Anal.}, 33(5):Paper No. 149, 14, 2023.

\bibitem{masiello2023rigidity}
A.~L. Masiello and G.~Paoli.
\newblock Rigidity results for the $ p $-{L}aplacian {P}oisson problem with
  {R}obin boundary conditions.
\newblock {\em arXiv preprint arXiv:2301.03958}, 2023.

\bibitem{mazja_russo}
V.~G. Maz'ya.
\newblock Weak solutions of the {D}irichlet and {N}eumann problems.
\newblock {\em Trudy Moskov. Mat. Ob\v{s}\v{c}.}, 20:137--172, 1969.

\bibitem{nirenberg}
L.~Nirenberg.
\newblock On elliptic partial differential equations.
\newblock {\em Ann. Scuola Norm. Sup. Pisa Cl. Sci. (3)}, 13:115--162, 1959.

\bibitem{polya-szego}
G.~P\'{o}lya and G.~Szeg\"{o}.
\newblock {\em Isoperimetric {I}nequalities in {M}athematical {P}hysics}.
\newblock Annals of Mathematics Studies, no. 27. Princeton University Press,
  Princeton, N. J., 1951.

\bibitem{San2}
R.~Sannipoli.
\newblock Comparison results for solutions to the anisotropic {L}aplacian with
  {R}obin boundary conditions.
\newblock {\em Nonlinear Anal.}, 214:Paper No. 112615, 21, 2022.

\bibitem{talenti76}
G.~Talenti.
\newblock Elliptic equations and rearrangements.
\newblock {\em Ann. Scuola Norm. Sup. Pisa Cl. Sci. (4)}, 3(4):697--718, 1976.

\bibitem{T2}
G.~Talenti.
\newblock Nonlinear elliptic equations, rearrangements of functions and
  {O}rlicz spaces.
\newblock {\em Ann. Mat. Pura Appl. (4)}, 120:160--184, 1979.

\bibitem{T3}
G.~Talenti.
\newblock Inequalities in rearrangement invariant function spaces.
\newblock In {\em Nonlinear analysis, function spaces and applications, {V}ol.
  5 ({P}rague, 1994)}, pages 177--230. Prometheus, Prague, 1994.

\end{thebibliography}

\end{document}